\newtheorem{theorem}[equation]{Theorem}
\newtheorem{corollary}[equation]{Corollary}
\newtheorem{proposition}[equation]{Proposition}
\theoremstyle{definition}
\newtheorem{definition}[equation]{Definition}
\theoremstyle{remark}
\newtheorem{remark}[equation]{Remark}
\numberwithin{equation}{section}
\tikzset{mathlabel/.style={
		execute at begin node=\scriptsize $,
		execute at end node=$
	}}
\newcommand{\C}[1]{\mathscr{#1}}
\newcommand{\abs}[1]{|#1|}
\def\op{\operatorname{op}}
\def\env{\operatorname{env}}
\def\To{\longrightarrow}
\def\hom{\operatorname{Hom}}
\def\homst{\underline{\operatorname{Hom}}}
\def\st{\stackrel} 
\def\chain{\operatorname{Ch}}
\def\derived{D}
\newcommand{\hh}[2]{HH^{#1}(#2)}
\newcommand{\hbw}[2]{H^{#1}(#2)}
\newcommand{\hc}[2]{C^{#1}(#2)}
\newcommand{\hz}[2]{\breve{C}^{#1}(#2)}
\newcommand{\modulesfp}[1]{\operatorname{mod}(#1)}
\newcommand{\modulesst}[1]{\underline{\operatorname{mod}}(#1)}
\newcommand{\modules}[1]{\operatorname{Mod}(#1)}
\newcommand{\free}[1]{\mathcal{F}(#1)}
\def\ext{\operatorname{Ext}}
\def\ker{\operatorname{Ker}}
\def\coker{\operatorname{Coker}}
\def\im{\operatorname{Im}}
\begin{document}

\title
{The first obstructions to enhancing a triangulated category}%
\author{Fernando Muro}%
\address{Universidad de Sevilla,
Facultad de Matemáticas,
Departamento de Álgebra,
Avda. Reina Mercedes s/n,
41012 Sevilla, Spain}
\email{fmuro@us.es}
\urladdr{http://personal.us.es/fmuro}

\thanks{The author was partially supported by the Spanish Ministry of Economy under the grant MTM2016-76453-C2-1-P (AEI/FEDER, UE)}
\subjclass
{}
\keywords{}

\begin{abstract}
	In this paper we relate triangulated category structures to the cohomology of small categories and define initial obstructions to the existence of an algebraic or topological enhancement. We show that these obstructions do not vanish in an example of triangulated category without models. We also obtain cohomological characterizations of pre-triangulated DG, $A$-infinity, and spectral categories.
\end{abstract}

\maketitle

\tableofcontents


\section{Introduction}

Heller \cite{heller_stable_1968} noted that a triangulated structure on an essentially small additive category $\C T$ with suspension functor $\Sigma\colon\C T\rightarrow\C T$ induces a stable \emph{Toda bracket} partial composition-like operation, sending morphisms
\begin{equation}\label{three_maps}
X\stackrel{f}{\To}Y\stackrel{g}{\To}Z\stackrel{h}{\To}T
\end{equation}
with $gf=0$ and $hg=0$ to a coset
\[\langle h,g,f\rangle\subset\C T(X,\Sigma^{-1}T),\]
satisfying certain properties. Exact triangles
\[X\stackrel{f}{\To}Y\stackrel{i}{\To}C_f\stackrel{q}{\To}\Sigma X\]
are characterized by the fact that the Toda bracket contains the identity map
\[\operatorname{id}_X\in \langle q,i,f\rangle\subset\C T(X,X).\]

The graded category $\C T_{\Sigma}$ associated with the pair $(\C T, \Sigma)$ is given by
\[\C T_{\Sigma}^n(X,Y)=\C T(X,\Sigma^nY),\quad n\in\mathbb Z.\]
An \emph{algebraic enhancement} of $\C T$ in the sense of Bondal and Kapranov \cite{bondal_enhanced_1991} is a DG-category $\C C$ with $H^*(\C C)=\C T_{\Sigma}$ such that the previous Toda brackets coincide with the standard Massey products in the cohomology of $\C C$.

Assume we are working over a field $k$. By Kadeishvili's theorem \cite{kadeishvili_theory_1980, lefevre-hasegawa_sur_2003}, a Bondal--Kapranov enhancement is the same as a minimal $A$-infinity category structure on $\C T_{\Sigma}$. The first possibly non-trivial piece of this structure is a multilinear ternary composition operation $m_3$, defined on chains of three composable morphisms like \eqref{three_maps} without further conditions, such that
\[m_3(h,g,f)\in\C T(X,\Sigma^{-1}T)\]
is a well-defined element. The connection with the triangulated structure is that
\[m_3(h,g,f)\in\langle h,g,f\rangle\]
whenever the Toda bracket is defined. The compatibility properties between $m_3$ and composition in $\C T$ amount to saying that $m_3$ is a Hochschild cocycle. We denote its cohomology class by
\[\{m_3\}\in \hh{3,-1}{\C T_{\Sigma}, \C T_{\Sigma}}\]
and call it \emph{universal Massey product}. This class, previously considered in e.g.~\cite{kadeishvili_algebraic_1982, benson_realizability_2004}, is independent of the choice of minimal model.

Two natural questions arise: Does a given triangulated category have an enhancement? If so, how many essentially different ones? There are remarkable results on the existence and uniqueness of enhancements for certain triangulated categories \cite{lunts_uniqueness_2010, canonaco_uniqueness_2015} as well as examples which do not admit any enhancement \cite{muro_triangulated_2007, dimitrova_triangulated_2009, rizzardo_k-linear_2018}. There are even examples with essentially different algebraic enhancements over a field \cite{kajiura_-enhancements_2013} and some others where the first question remains open \cite{amiot_structure_2007}. In this paper we give the first step towards a different, obstruction-theoretic approach applicable to any $\C T$.

We start by considering the set of triangulated structures on a pair $(\C T,\Sigma)$ as above. For the moment we only consider triangulated structures in the sense of Puppe \cite{puppe_formal_1962}, i.e.~not requiring Verdier's octahedral axiom. Freyd \cite{freyd_stable_1966} proved that, for that set to be non-empty, the category $\modulesfp{\C T}$ of finitely presented right $\C T$-modules must be a Frobenius abelian category. In that case we can define the stable module category $\modulesst{\C T}$, which is canonically triangulated with suspension functor $S$, the cosyzygy functor, and $\Sigma$ induces a triangulated endofunctor of $\modulesst{\C T}$. If in addition idempotents split in $\C T$, Heller \cite{heller_stable_1968} defined a bijection between the set of triangulated structures on $(\C T,\Sigma)$ and a subset of the set of natural transformations $\Sigma\rightarrow S^3$ between endofunctors of $\modulesst{\C T}$ satisfying two algebraic conditions. 
The requirement on idempotents is harmless because any triangulated structure extends uniquely to the idempotent completion of $\C T$ \cite{balmer_idempotent_2001}.

In Section \ref{heller_section} we identify the set of triangulated structures with a subset of the Hochschild cohomology group
\[\hh{0,-1}{\modulesfp{\C T_{\Sigma}},\ext_{\C T_{\Sigma}}^{3,\ast}},\]
where $\modulesfp{\C T_{\Sigma}}$ is the graded abelian category of finitely presented right $\C T_{\Sigma}$-modules. This group is usually strictly smaller than Heller's set of natural transformations $\Sigma\rightarrow S^3$. The subset of triangulated structures is again defined by two algebraic conditions on the Hochschild cohomology classes which correspond to Heller's. The second condition is identified later, in Section \ref{spectral_sequence_section} for triangulated categories over a field, and in Section \ref{topological_section} in general.

We prove in Section \ref{toda_brackets_section} that the previous Hochschild cohomology group is in bijection with the set of all stable Toda brackets in $(\C T,\Sigma)$. We also show that unstable Toda brackets are in bijection with another (ungraded) Hochschild cohomology group where the former injects. The latter is also in bijection with the set of natural transformations $\Sigma\rightarrow S^3$ used by Heller. This establishes a direct link between Toda brackets and natural transformations $\Sigma\rightarrow S^3$ not considered by Heller in \cite{heller_stable_1968} despite he used both.

In Sections \ref{spectral_sequence_section} and \ref{octahedral_axiom_section} we restrict ourselves to working over a ground field $k$. In the first one we define a first quadrant spectral sequence of graded vector spaces
\[E_2^{p,q}=\hh{p,\ast}{\modulesfp{\C T_{\Sigma}},\ext_{\C T_{\Sigma}}^{q,\ast}}\Longrightarrow\hh{p+q,\ast}{\C T_{\Sigma}, \C T_{\Sigma}}\]
whose edge morphism
\begin{equation}\label{edge}
\hh{3,-1}{\C T_{\Sigma}, \C T_{\Sigma}}\To \hh{0,-1}{\modulesfp{\C T_{\Sigma}},\ext_{\C T_{\Sigma}}^{3,\ast}}
\end{equation}
takes the universal Massey product of any enhancement to the stable Toda bracket of the triangulated structure,
\[\{m_3\}\mapsto \langle-,-,-\rangle.\]
The spectral sequence is not totally new, a related ungraded version has been considered in \cite{lowen_hochschild_2005}.

Using the connection between the stable Toda bracket of a triangulated category and the universal Massey product of any possible enhancement via the previous spectral sequence, we can define the first obstructions to the existence of a Bondal--Kapranov enhancement. The very first obstruction is the image of the Toda bracket along the spectral sequence differential
\begin{equation}\label{d2}
d_2\colon \hh{0,-1}{\modulesfp{\C T_{\Sigma}},\ext_{\C T_{\Sigma}}^{3,\ast}}\To \hh{2,-1}{\modulesfp{\C T_{\Sigma}},\ext_{\C T_{\Sigma}}^{2,\ast}}.
\end{equation}
If it vanishes, then $\langle-,-,-\rangle\in E_3^{0,3}\subset \hh{0,\ast}{\modulesfp{\C T_{\Sigma}},\ext_{\C T_{\Sigma}}^{3,\ast}}$ is in the third page of the spectral sequence and the second obstruction is its image along
\[d_3\colon E_3^{0,3}\To E^{3,1}_3.\]
The second obstruction vanishes when $\langle-,-,-\rangle\in E_4^{0,3}\subset E_3^{0,3}$ is in the fourth page. In this case there is a third obstruction, the image of the Toda bracket under
\[d_4\colon E_4^{0,3}\To E^{4,0}_4.\]
When it vanishes, $\langle-,-,-\rangle\in E_\infty^{0,3}=E_5^{0,3}\subset E_4^{0,3}$ is a permament cycle, i.e.~the Toda bracket is in the image of the edge morphism \eqref{edge}. If this happens, any preimage is a potential universal Massey product, that is to say, if a representing cocycle $m_3$ can be extended to a full minimal $A$-infinity algebra structure on $\C T_{\Sigma}$, then this extension is an enhancement for the triangulated structure on $\C T$. Abusing terminology, we will sometimes say that a triangulated category over a field has a universal Massey product if its stable Toda bracket has a preimage along the edge morphism \eqref{edge}.

There is a well-known classical obstruction theory with values in Hochschild cohomology for the extension of truncated minimal $A$-infinity category structures, see e.g.~\cite{lefevre-hasegawa_sur_2003}. We have developed an enhanced version in \cite{muro_enhanced_2015}, after Angeltveit \cite{angeltveit_topological_2008}. It will be further investigated in the triangulated context in a subsequent paper.

Any Puppe triangulated category with an enhacement satisfies Verdier's octahedral axiom. However, it does not seem to be possible to translate this axiom into algebraic conditions on Hochschild cohomology classes. In Section \ref{octahedral_axiom_section} we give a sufficient algebraic condition for the octahedral axiom: it holds provided the Toda bracket classifying the triangulated structure is in the kernel of the spectral sequence differential \eqref{d2}. In particular, any triangulated structure over a field with a universal Massey product satisfies the octahedral axiom. This also leads to an apparently new homological characterization of pre-triangulated DG- and $A$-infinity categories over a field in the sense of \cite{bondal_enhanced_1991}.

Many triangulated categories do not have algebraic enhancements. The most prominent example is the stable homotopy category of spectra. Nevertheless, most triangulated categories have a \emph{topological enhancement}, which consists of a full embedding into the homotopy category $\operatorname{Ho}\C M$ of a stable model category $\C M$ \cite{schwede_algebraic_2010,hovey_model_1999}. In the topological context, enhancements are usually called \emph{models}. Algebraic enhancements are also topological.

Using \cite{baues_homotopy_2007}, we show in Section \ref{topological_section}  that a topological enhancement of $\C T$ gives rise to a cohomology class
\[\{m_3\}\in \hbw{3,-1}{\C T_{\Sigma}, \C T_{\Sigma}}\]
in the non-additive cohomology of the category $\C T_{\Sigma}$, which is a generalization of Mac Lane and topological Hochschild cohomology of rings. This cohomology class is called \emph{universal Toda bracket}. Formally, a cocycle $m_3$ representing the universal Toda bracket is a ternary operation as above, except for the fact that it need not be multiliear. The connection to stable Toda brackets in the triangulated category $\C T$ is exactly as above. Universal Toda brackets were introduced in \cite{baues_cohomology_1989}, where their connection with ordinary Toda brackets is also established.

Hochschild cohomology coincides with its non-additive counterpart in dimension $0$, in particular 
\[\hh{0,-1}{\modulesfp{\C T_{\Sigma}},\ext_{\C T_{\Sigma}}^{3,\ast}}\cong \hbw{0,-1}{\modulesfp{\C T_{\Sigma}},\ext_{\C T_{\Sigma}}^{3,\ast}},\]
so both of them are in bijection with stable Toda brackets in $(\C T,\Sigma)$.

In Section \ref{topological_section} we define a first quadrant spectral sequence of graded abelian groups for non-additive cohomology of categories
\[E_2^{p,q}=\hbw{p,\ast}{\modulesfp{\C T_{\Sigma}},\ext_{\C T_{\Sigma}}^{q,\ast}}\Longrightarrow\hbw{p+q,\ast}{\C T_{\Sigma}, \C T_{\Sigma}}\]
whose edge morphism
\begin{equation}\label{edge_top}
\hbw{3,-1}{\C T_{\Sigma}, \C T_{\Sigma}}\To \hbw{0,-1}{\modulesfp{\C T_{\Sigma}},\ext_{\C T_{\Sigma}}^{3,\ast}}
\end{equation}
takes the universal Toda bracket of any topological enhancement to the stable Toda bracket of the triangulated structure,
\[\{m_3\}\mapsto \langle-,-,-\rangle.\]
An ungraded version of this spectral sequence appears in \cite{jibladze_cohomology_1991} with an apparently different target. 

We can define the first three obstructions to the existence of a topological enhancement as above. We also show that the previous sufficient condition for Verdier's octahedral axiom is still valid for this spectral sequence. In particular any triangulated category with a universal Toda bracket satisfies the octahedral axiom. This leads to an new homological characterization of triangulated spectral categories in the sense of Tabuada \cite{tabuada_matrix_2010}. Moreover, it shows that, if we ever find a Puppe triangulated category which does not satisfy the octahedral axiom, then the very first obstruction to the existence of a topological enhancement must be non-zero.

We conclude in Section \ref{example} with an explicit example where the obstructions do not vanish. The triangulated category $\C T$ in the example is the category of finitely generated free modules over $\mathbb Z/4$ with $\Sigma$ the identity functor. This category is among the first known examples of triangulated categories without models of any kind \cite{muro_triangulated_2007}.

\section{Hochschild cohomology of categories}\label{Hochschild_cohomology_of_categories}

We work over a ground commutative ring $k$ and graded objects are $\mathbb Z$-graded. The degree of $x$ is denoted by $\abs{x}$. Let $\modules{k}$
be the category of graded $k$-modules equipped with the usual closed symmetric monoidal structure, where the symmetry constraint uses the Koszul sign rule. The tensor product will be denoted by $\otimes$ and the inner $\hom$ by $\hom_k^*$. Since we will not change rings, $k$ will often be dropped from notation.

A \emph{graded $k$-linear category}, or just \emph{graded category}, is a category enriched in $\modules{k}$, and similarly graded functors and graded natural transformations. Note that graded natural transformations may have any degree $n\in\mathbb Z$, while graded functors must be defined by degree $0$ morphisms between their graded modules of morphisms. We refer to \cite{kelly_basic_2005} for enriched category concepts. For the time being, all categories will be linear, so we will often drop this word.

Given a small graded category $\C C$, a \emph{right $\C C$-module} $M$ is a graded functor \[M\colon \C C^{\op}\To \modules{k}\] from the opposite graded category $\C C^{\op}$ to the category $\modules{k}$ of graded $k$-modules, i.e.~for each object $X$ in $\C C$ a graded module $M(X)$ is given, and for each pair of objects $X,X'$ in $\C C$ a degree $0$ morphism of graded modules
\begin{align*}
M(X)\otimes\C C(X',X)&\longrightarrow M(X'),\\
x\otimes f&\;\mapsto\; x\cdot f,
\end{align*}
satisfying the obvious associativity and unit conditions. A \emph{morphism} of right $\C C$-modules $g\colon M\rightarrow N$ of any degree $n\in\mathbb Z$ is a collection of degree $n$ morphims of graded $k$-modules $g(X)\colon M(X)\rightarrow N(X)$, $X$ an object in $\C C$, such that, with the notation above, $g(X')(x\cdot f)=g(X)(x)\cdot f$.

Right $\C C$-modules form a graded abelian category $\modules{\C C}$. Representable functors $\C C(-,X)$ and their shifts form a set of projective generators. A right $\C C$-module is \emph{finitely presented} if it is the cokernel of a morphism between finite direct sums of these projective generators. The full subcategory of finitely presented right $\C C$-modules will be denoted by $\modulesfp{\C C}$. Morphism graded $k$-modules in $\modules{\C C}$ are denoted by $\hom_{\C C}^\ast$ and their derived functors by $\ext^{n,\ast}_{\C C}$, $n\geq 0$. A good reference for this kind of graded categorical algebra is Street's thesis \cite{street_homotopy_1969}. 

A \emph{bimodule} $M$ over $\C C$ is a right $\C C^{\env}$-module, where $\C C^{\env}=\C C\otimes \C C^{\op}$, i.e.~a graded functor
\[M\colon \C C^{\env}=\C C^{\op}\otimes\C C\To \modules{k}.\]
Equivalently, $M$ is a family of graded modules $M(X,Y)$ indexed by pairs of objects $X,Y$ in $\C C$
and, for each four objects $X,X',Y,Y'$ in $\C C$, a graded module morphism of degree $0$
\begin{align*}
\C C(Y,Y')\otimes M(X,Y)\otimes \C C(X',X)&\longrightarrow M(X',Y'),\\
g\otimes x\otimes f&\;\mapsto\; g\cdot x\cdot f,
\end{align*}
satisfying the usual associativity and unit conditions. The $\C C$-bimodule $\C C(-,-)$ will be simply denoted by $\C C$. A \emph{morphism} of $\C C$-bimodules $h\colon M\rightarrow N$ is a family of morphims of graded $k$-modules $h(X,Y)\colon M(X,Y)\rightarrow N(X,Y)$ satisfying \[h(X',Y')(g\cdot x\cdot f)=(-1)^{\abs{g}\abs{h}}g\cdot h(X,Y)(x)\cdot f.\]

The \emph{bar complex} $B_\star(\C C)$ is the chain complex of $\C C$-bimodules concentrated in dimensions $\geq 0$ defined by
\[B_n(\C C)=\bigoplus_{X_0,\dots, X_n} \C C(X_0,-)\otimes\C C(X_1,X_{0})\otimes\cdots\otimes\C C(X_n,X_{n-1})\otimes\C C(-,X_n),\]
where the coproduct is indexed by the sequences of $n+1$ objects of $\C C$,
with differential
\[d(f_0\otimes\dots\otimes f_{n+1})=\sum_{i=0}^n(-1)^i\cdots\otimes f_if_{i+1}\otimes\cdots.\]
It is a relative projective resolution of $\C C$ (with respect to $k$-split surjections) with augmentation 
\[\epsilon\colon B_\star(\C C)\To\C C,\qquad\epsilon(f_0\otimes f_1)=f_0f_1.\]
If $\C C$ is locally projective, i.e.~if morphism graded modules in $\C C$ are projective, then it is an honest resolution. 
We refer to \cite{mac_lane_homology_1963} for basic relative homological algebra. 

The \emph{Hochschild cohomology} $\hh{\star,*}{\C C,M}$ of $\C C$ with coefficients in a $\C C$-bimodule $M$ is the cohomology of this complex (we should maybe say \emph{Hochschild--Mitchell cohomology} \cite{mitchell_rings_1972}). Using adjunction and the Yoneda lemma, the \emph{Hochschild cochain complex} $\hc{\star,*}{\C C,M}=\hom_{\C C^{\env}}^*(B_\star(\C C),M)$ is given in Hochschild degree $\star=n$ by
\[\hc{n,*}{\C C,M}=\prod_{X_0,\dots, X_n} \hom_{k}^*(\C C(X_1,X_{0})\otimes\cdots\otimes\C C(X_n,X_{n-1}),M(X_n,X_0)),\]
and the differential is given by
\begin{multline}\label{hochschild_differential}
d(\varphi)(f_1,\dots,f_{n+1})={}(-1)^{\abs{\varphi}\abs{f_1}}f_1\cdot \varphi(f_2,\dots,f_{n+1})\\+\sum_{i=1}^n(-1)^i \varphi(\dots, f_if_{i+1},\dots)+(-1)^{n+1}\varphi(f_1,\dots,f_n)\cdot f_{n+1}.
\end{multline}
If $M$ is a monoid in the category of $\C C$-bimoules, then Hochschild cohomology becomes a bigraded ring with the well-known cup-product. 
Note that $\hh{0,*}{\C C,M}$ coincides in general with the \emph{end} of the bifunctor $M$.

Hochschild cohomology is functorial in both $\C C$ and $M$. A graded functor $F\colon \C D\rightarrow \C C$ and a $\C C$-bimodule morphism $\varphi\colon N\rightarrow M$ induce morphisms
\begin{align*}
F^*\colon \hh{\star,\ast}{\C C,M}&\To \hh{\star,\ast}{\C D,M(F,F)},\\
\varphi_*\colon \hh{\star,\ast}{\C C,N}&\To \hh{\star,\ast+\abs{\varphi}}{\C C,M}.
\end{align*}
They satisfy $F^*\varphi_*=\varphi(F,F)_*F^*$. 
The bivariant functoriality can be described as in \cite{muro_functoriality_2006}, in particular categorical equivalences induce isomorphisms. 

We consider the ungraded case as the particular instance of the former where everything is concentrated in degree $0$. Over ungraded categories, we can consider both graded and ungraded (bi)modules. If $\C T$ is an ungraded category and $M$ is a graded $\C T$-bimodule, then the cohomology of $\C T$ with coefficients in $M$ reduces to ungraded cohomology, \[\hh{p,q}{\C T,M}=\hh{p}{\C T,M^q}.\] 
We now relate the cohomology of graded and ungraded categories.

We have sketched in the introduction how out of an arbitrary ungraded category $\C T$ equipped with an automorphism $\Sigma$, we can form a graded category $\C T_{\Sigma}$. 
Composition in $\C T_\Sigma$ is defined as follows,
\begin{align*}
\C T_{\Sigma}^p(Y,Z)\otimes \C T_{\Sigma}^q(X,Y)&\To\C T_\Sigma^{p+q}(X,Z),\\ 
f\otimes g&\;\mapsto\; (\Sigma^qf)g.
\end{align*}
Here, on the right, we have a composition in $\C T$. 
We can extend $\Sigma$ to an automorphism $\Sigma \colon\C T_\Sigma\rightarrow\C T_\Sigma$, defined as in $\C T$ on objects, and on morphisms as $(-1)^n\Sigma$ in each degree $n\in\mathbb Z$. In this way, the graded extension $\Sigma$ is equipped with a natural isomorphism 
\begin{equation}\label{imath}
\imath_X\colon X\cong \Sigma X
\end{equation} of degree $-1$ given by the identity in $X$. The sign in the extension of $\Sigma$ is necessary for the graded naturality, because of Koszul's sign rule. Graded categories equivalent to some $\C T_\Sigma$ are called \emph{weakly stable} \cite{street_homotopy_1969}. They are characterized by the fact that shifts of representable functors are representable, or equivalently, each object has an isomorphism of any given degree.

Similarly, if $M$ is an ungraded $\C T$-bimodule equipped with an isomorphism
$\tau\colon M\cong M(\Sigma,\Sigma)$ such that, given
$g\in\C T(Y,Y')$, $x\in M(X,Y)$, and $f\in\C T(X',X)$,
\[\tau(g\cdot x\cdot f)=(\Sigma g)\cdot\tau(x)\cdot(\Sigma f),\]
then we can form a graded $\C T_{\Sigma}$-bimodule $M_\tau$ defined by
\[M_\tau^n(X,Y)=M(X,\Sigma^nY).\]
The bimodule structure is defined as
\begin{align*}
\C T^p_\Sigma(Y,Y')\otimes M^q_\tau(X,Y)\otimes \C T^r_\Sigma (X',X)
&\To
M^{p+q+r}_\tau(X',Y'),\\
g\otimes x\otimes f&\;\mapsto\;(\Sigma^{q+r}g)\cdot (\tau^rx)\cdot f.
\end{align*}
Here, on the right, we use the suspension in $\C T$ and the $\C T$-bimodule structure of $M$ (no signs involved). Moreover, we can extend $\tau$ to a degree $0$ isomorphism of $\C T_\Sigma$-bimodules $\tau\colon M_\tau\cong M_\tau(\Sigma,\Sigma)$ defined as $(-1)^n\tau$ in each degree $n\in\mathbb Z$. The sign in the definition of $\tau$ is needed to cancel the signs in the $\C T_\Sigma$-bimodule morphism equation for $\tau$ arising from the definition of the suspension in $\C T_\Sigma$. 

Since $\C T\subset \C T_{\Sigma}$ is the degree $0$ part, we can also regard $M_\tau$ as a graded $\C T$-bimodule.

\begin{proposition}\label{graded_ungraded_long_exact_sequence}
	If $\C T$ is an ungraded category equipped with an automorphism $\Sigma\colon \C T\rightarrow\C T$, $M$ is an ungraded $\C T$-bimodule equipped with an isomorphism $\tau\colon M\cong M(\Sigma,\Sigma)$ satisfying $\tau(f\cdot x\cdot g)=(\Sigma f)\cdot\tau(x)\cdot(\Sigma g)$, and $i\colon\C T\subset\C T_{\Sigma}$ denotes the inclusion of the degree $0$ part, then there is a long exact sequence
	\begin{center}
		\begin{tikzcd}[row sep=5mm]
			\vdots\arrow[d]\\
			\hh{n,*}{\C T_{\Sigma},M_{\tau}}\arrow[d, "i^*"]\\
			\hh{n,*}{\C T,M_{\tau}}\arrow[d, "\operatorname{id}-\tau_*^{-1}\Sigma^*"]\\ 
			\hh{n,*}{\C T,M_{\tau}}\arrow[d]\\
			\hh{n+1,*}{\C T_{\Sigma},M_{\tau}}\arrow[d]\\
			\vdots
		\end{tikzcd}
	\end{center}
\end{proposition}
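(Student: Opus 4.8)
The plan is to construct a short exact sequence of Hochschild cochain complexes and pass to the associated long exact cohomology sequence. The key observation is that $\C T_\Sigma$ is a $\mathbb Z$-graded category whose degree $0$ part is $\C T$, and the passage from $\C C=\C T_\Sigma$ to its degree $0$ subcategory should be expressible at the cochain level. Concretely, I would compare $\hc{\star,*}{\C T_\Sigma,M_\tau}$ with $\hc{\star,*}{\C T,M_\tau}$ (the latter formed using $M_\tau$ as a graded $\C T$-bimodule, as indicated just before the statement). Because every object of $\C T_\Sigma$ carries an isomorphism of every degree — the maps $\imath_X$ of \eqref{imath} — a morphism in $\C T_\Sigma^n(X,Y)=\C T(X,\Sigma^nY)$ is, up to the canonical degree $-1$ isomorphism, the same data as a degree $0$ morphism in $\C T$. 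This is the ``weakly stable'' phenomenon recalled in the excerpt. So I expect an isomorphism of cochain complexes, or at least a quasi-isomorphism, between the Hochschild complex of $\C T_\Sigma$ and the \emph{totalization} of a double complex built from the Hochschild complex of $\C T$ together with the action of $\Sigma$.

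The technical heart is to identify that double complex. I would argue that restricting along $i\colon\C T\subset\C T_\Sigma$ together with the operator $\operatorname{id}-\tau_*^{-1}\Sigma^*$ assembles into a short exact sequence of cochain complexes
\[
0\To \hc{\star,*}{\C T_\Sigma,M_\tau}\To \hc{\star,*}{\C T,M_\tau}\st{\operatorname{id}-\tau_*^{-1}\Sigma^*}{\To} \hc{\star,*}{\C T,M_\tau}\To 0,
\]
i.e.\ a cochain-level statement that $\hc{\star,*}{\C T_\Sigma,M_\tau}$ is the \emph{(homotopy) fixed points} of the self-map $\tau_*^{-1}\Sigma^*$ acting on $\hc{\star,*}{\C T,M_\tau}$. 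The point is that a graded $\C T_\Sigma$-bimodule morphism out of the bar complex of $\C T_\Sigma$ is the same as a $\C T$-bimodule morphism out of the bar complex of $\C T$ which is \emph{invariant} under the evident twisted $\Sigma$-action; and the cokernel/surjectivity of $\operatorname{id}-\tau_*^{-1}\Sigma^*$ at the cochain level (as opposed to merely at cohomology) follows because $\C T_\Sigma$ has the degree $-1$ isomorphisms $\imath_X$, so one can always ``shift'' a $\C T$-cochain to build a $\C T_\Sigma$-cochain with prescribed image. I would verify compatibility with the Hochschild differential \eqref{hochschild_differential} using the explicit sign conventions fixed in the excerpt for the extensions of $\Sigma$ and $\tau$ to $\C T_\Sigma$ — recall those signs were introduced precisely to make the graded naturality and the bimodule equations hold, so they should make this comparison sign-consistent.

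Granting the short exact sequence of complexes, the long exact sequence in cohomology is immediate, and its connecting map lands in $\hh{n+1,*}{\C T_\Sigma,M_\tau}$, the map labeled $i^*$ is induced by restriction to the degree $0$ part, and the self-map is $\operatorname{id}-\tau_*^{-1}\Sigma^*$, exactly as in the displayed diagram. I expect the main obstacle to be the cochain-level surjectivity of $\operatorname{id}-\tau_*^{-1}\Sigma^*$: one must produce, for a given $\C T$-cochain $\psi$, a $\C T$-cochain $\varphi$ with $\varphi-\tau_*^{-1}\Sigma^*\varphi=\psi$ together with a $\C T_\Sigma$-cochain restricting to $\varphi$ — and carrying this through requires a careful bookkeeping argument using the isomorphisms $\imath_X$ to ``spread'' the value of a cochain across all internal degrees, as well as checking that the resulting assignment is genuinely a $\C T_\Sigma$-cochain (i.e.\ multilinear over $\C T_\Sigma$, not just over $\C T$) and that it is a cocycle whenever $\psi$ is. A secondary annoyance will be keeping the Koszul signs straight, but the sign choices already baked into the definitions of $\Sigma$ and $\tau$ on $\C T_\Sigma$ should make everything fit.
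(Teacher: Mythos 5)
Your reduction to a \emph{strict} short exact sequence of Hochschild cochain complexes
\[
0\To \hc{\star,*}{\C T_\Sigma,M_\tau}\st{i^*}{\To} \hc{\star,*}{\C T,M_\tau}\st{\operatorname{id}-\tau_*^{-1}\Sigma^*}{\To} \hc{\star,*}{\C T,M_\tau}\To 0
\]
is where the proof breaks, and it fails at every spot. Hochschild cochains are arbitrary families of $k$-linear maps, with no naturality: an $n$-cochain on $\C T_\Sigma$ records a value on every tuple of morphisms of arbitrary internal degrees, and restriction along $i$ only remembers the multidegree-zero components, so $i^*$ is far from injective on cochains for $n\geq 1$; in particular a $\C T_\Sigma$-bimodule map out of $B_\star(\C T_\Sigma)$ is \emph{not} the same as a $\Sigma$-invariant $\C T$-bimodule map out of $B_\star(\C T)$. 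Exactness in the middle already fails in cochain degree $0$, where $i^*$ is the identity of $\prod_X M_\tau(X,X)$ while $\operatorname{id}-\tau_*^{-1}\Sigma^*$ is in general nonzero there. Finally, $\operatorname{id}-\tau_*^{-1}\Sigma^*$ need not be surjective on cochains: for $\Sigma=\operatorname{id}_{\C T}$ and $\tau=\operatorname{id}_M$ it is the zero map (even though the isomorphisms $\imath_X$ still exist), and in that case the long exact sequence of the proposition gives extensions $0\to \hh{n,*}{\C T,M_\tau}\to\hh{n+1,*}{\C T_\Sigma,M_\tau}\to\hh{n+1,*}{\C T,M_\tau}\to 0$, which also shows that $\hh{\star,*}{\C T_\Sigma,M_\tau}$ cannot be the strict fixed points of $\tau_*^{-1}\Sigma^*$. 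The correct statement is of homotopy-fibre (mapping cone) type, which you gesture at with ``totalization'' and ``homotopy fixed points'' but then abandon in favour of the strict kernel-plus-surjectivity picture; the technical work you propose (cochain-level surjectivity via $\imath_X$, identification of $\C T_\Sigma$-cochains with invariant $\C T$-cochains) cannot be carried out.

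The paper's proof realizes your idea on the resolution side instead. Adjunction does identify $\C T_\Sigma$-bimodule maps out of the extended bar complex $\C T_\Sigma\otimes_{\C T}B_\star(\C T)\otimes_{\C T}\C T_\Sigma$ into $M_\tau$ with plain $\C T$-cochains (no invariance condition), but this complex resolves $\C T_\Sigma\otimes_{\C T}\C T_\Sigma$, not $\C T_\Sigma$. One then uses $\imath$ to build a degree-$0$ bimodule automorphism $\Gamma$ of $\C T_\Sigma\otimes_{\C T}\C T_\Sigma$ (degreewise the coordinate shift of $\bigoplus_{q\in\mathbb Z}\C T(-,\Sigma^n)$), checks that $\operatorname{id}-\Gamma$ is injective with cokernel the composition map onto $\C T_\Sigma$, lifts $\Gamma$ to the extended bar resolution, and observes that the mapping cone of $\operatorname{id}-\Gamma$ is a relative projective resolution of $\C T_\Sigma$. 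Applying $\hom_{\C T_\Sigma^{\env}}^*(-,M_\tau)$ to this exact triangle produces the long exact sequence, and a short cochain computation identifies the map induced by $\Gamma$ with $\tau_*^{-1}\Sigma^*$. If you want to keep your outline, you must replace the claimed short exact sequence by such a mapping-cone (homotopy-fibre) construction exhibiting $\hc{\star,*}{\C T_\Sigma,M_\tau}$ as quasi-isomorphic to the fibre of $\operatorname{id}-\tau_*^{-1}\Sigma^*$, rather than as its literal kernel.
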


\begin{proof}
	We consider the $\C T_\Sigma$-bimodule $\C T_\Sigma\otimes_{\C T}\C T_\Sigma$. It is degreewise given by
	\begin{equation*}
	\begin{split}
		(\C T_\Sigma\otimes_{\C T}\C T_\Sigma)^n&= \bigoplus_{p+q=n}\C T_\Sigma^p\otimes_{\C T}\C T_\Sigma^q\\
		&=\bigoplus_{p+q=n}\C T(-,\Sigma^p)\otimes_{\C T}\C T(-,\Sigma^q)\\
		&=\bigoplus_{q\in\mathbb Z}\C T(-,\Sigma^{n-q})\otimes_{\C T}\C T(-,\Sigma^q)\\
		{\scriptstyle (\bigoplus_{q\in\mathbb Z}\Sigma^q\otimes 1)}&\cong
		\bigoplus_{q\in\mathbb Z}\C T(\Sigma^q,\Sigma^n)\otimes_{\C T}\C T(-,\Sigma^q)\\
		{\scriptstyle (\text{composition})}&\cong \bigoplus_{q\in\mathbb Z}\C T(-,\Sigma^n).
	\end{split}
	\end{equation*}
	The natural transformation \eqref{imath} defines a degree $0$ bimodule automorphism
	\[\Gamma\colon \C T_\Sigma\otimes_{\C T}\C T_\Sigma\cong \C T_\Sigma\otimes_{\C T}\C T_\Sigma\colon
	f\otimes g\mapsto f\imath^{-1}\otimes\imath g.\]
	Indeed, the left (resp.~right) tensor coordinate shifts its degree by $+1$ (resp.~$-1$) so, as a whole, it has degree $0$. 
	Degreewise, $\Gamma$ is the automorphism of $\bigoplus_{q\in\mathbb Z}\C T(-,\Sigma^n)$ which shifts coordinates one step downwards. Therefore, the degree $0$ bimodule morphism
	\[\operatorname{id}-\Gamma\colon \C T_\Sigma\otimes_{\C T}\C T_\Sigma\longrightarrow \C T_\Sigma\otimes_{\C T}\C T_\Sigma \]
	is injective (its kernel would be, degreewise, the elements whose coordinates are all equal to the previous coordinate, hence zero since we are in a direct sum).
	Moreover, the bimodule morphism defined by composition 
	\[\C T_\Sigma\otimes_{\C T}\C T_\Sigma\longrightarrow \C T_\Sigma\colon f\otimes g\mapsto fg\]
	is degrewise given by the identity in $\C T(-,\Sigma^n)$ on each direct summand, so it is the cokernel of $\operatorname{id}-\Gamma$, since $\operatorname{id}-\Gamma$ is actually the standard presentation of the colimit of the $\mathbb Z$-indexed diagram given by the identity in $\C T(-,\Sigma^n)$ everywhere. Suming up, we have a short exact sequence of $\C T_\Sigma$-bimodules
	\[\C T_\Sigma\otimes_{\C T}\C T_\Sigma\stackrel{\operatorname{id}-\Gamma}\hookrightarrow \C T_\Sigma\otimes_{\C T}\C T_\Sigma\stackrel{\text{comp.}}\twoheadrightarrow \C T_\Sigma.\]
	We will use it to construct a convenient $\C T_\Sigma$-bimodule resolution of $\C T_\Sigma$ from a resolution of $\C T_\Sigma\otimes_{\C T}\C T_\Sigma$ and a lift of $\operatorname{id}-\Gamma$.
		
	The extension of scalars of the bar complex of $\C T$-bimodules $B_*(\C T)$ along the inclusion $i$ is $\C T_{\Sigma}\otimes_{\C T} B_*(\C T)\otimes_{\C T} \C T_{\Sigma}$, which is a relative projective resolution of $\C T_{\Sigma}\otimes_{\C T} \C T_{\Sigma}$. At each bar degree it is given by
	\begin{multline*}
		\C T_{\Sigma}\otimes_{\C T} B_n(\C T)\otimes_{\C T} \C T_{\Sigma}\\=\bigoplus_{X_0,\dots, X_n} \C T_{\Sigma}(X_0,-)\otimes\C T(X_1,X_{0})\otimes\cdots\otimes\C T(X_n,X_{n-1})\otimes\C T_\Sigma(-,X_n).
	\end{multline*}
	By adjunction, we can use this complex to compute the cohomology of $\C T$ with coefficients in the restriction of a $\C T_\Sigma$-bimodule (e.g.~$M_\tau$) along $i$.
	
	The automorphism $\Gamma$ lifts to the bar resolution 
	\[\Gamma\colon \C T_{\Sigma}\otimes_{\C T} B_*(\C T)\otimes_{\C T} \C T_{\Sigma}\cong \C T_{\Sigma}\otimes_{\C T} B_*(\C T)\otimes_{\C T} \C T_{\Sigma}\]
	by means of the $\C T_\Sigma$-bimodule isomorphisms
	\begin{multline*}
	\C T_{\Sigma} (X_0,-)\otimes\C T(X_1,X_{0})\otimes\cdots\otimes\C T(X_n,X_{n-1})\otimes\C T_{\Sigma} (-,X_n)\\\cong 
	\C T_{\Sigma} (\Sigma X_0,-)\otimes\C T(\Sigma X_1,\Sigma X_{0})\otimes\cdots\otimes\C T(\Sigma X_n,\Sigma X_{n-1})\otimes\C T_{\Sigma} (-,\Sigma X_n)
	\end{multline*}
	defined by 
	\begin{align*}
	f_0\otimes f_1\otimes\cdots \otimes f_n\otimes f_{n+1}&\mapsto f_0\imath^{-1} \otimes (\Sigma f_1)\otimes\cdots\otimes(\Sigma f_n)\otimes \imath f_{n+1}.
	\end{align*}
	The mapping cone of $\operatorname{id}-\Gamma$ is therefore a relative projective resolution of $\C T_\Sigma$ as a bimodule over itself, which can be used to compute the cohomology of this category. The exact sequence in the statement will be the long exact cohomology sequence of the standard exact triangle completion of $\operatorname{id}-\Gamma$ with coefficients in $M_\tau$. It is only left to identify the morphism induced by $\Gamma$ on cohomology with $\tau^{-1}_*\Sigma^*$. We do this in the following  paragraph, actually at the level of cochains.
	
	A $\C T_\Sigma$-bimodule morphism of degree $m$
	\[\varphi\colon \C T_{\Sigma}\otimes_{\C T} B_n(\C T)\otimes_{\C T} \C T_{\Sigma}\longrightarrow M_{\tau}\]
	identifies with a collection of $k$-module morphisms
	\[\varphi\colon \C T(X_1,X_{0})\otimes\cdots\otimes\C T(X_n,X_{n-1})\longrightarrow M_\tau^m(X_n,X_0)=M(X_n,\Sigma^mX_0),\]
	one for each sequence of objects $X_0,\dots, X_n$. The composite $\varphi\Gamma$ is given by
	\begin{equation*}
	\begin{split}
	\varphi\Gamma(f_1\otimes\cdots \otimes f_n)&=(-1)^m\imath^{-1}_{\Sigma^mX_0}\varphi((\Sigma f_1)\otimes\cdots\otimes(\Sigma f_n))\imath_{X_n}\\
	&=(-1)^m\tau^{|\imath_{X_n}|}\varphi((\Sigma f_1)\otimes\cdots\otimes(\Sigma f_n))\\
	&=\tau^{-1}\varphi(\Sigma^{\otimes^n})(f_1\otimes\cdots \otimes f_n).
	\end{split}
	\end{equation*}
	Here we use that $\imath$ has deegree $-1$ and the $f_i$'s have degree $0$, that $\imath$ is given by identity maps, and the definitions of the $\C T_\Sigma$-module $M_\tau$ and the graded $\tau$.
\end{proof}

\begin{remark}[Multiplicative properties]\label{multiplicative_properties}
	In the context of the previous proposition, assume that $M$ is a monoid in the category of $\C T$-bimodules and $\tau$ is a monoid morphism. Then $M_\tau$ is a monoid in the category of $\C T_\Sigma$-bimodules with composition law defined as 
	\[M_{\tau}^p(Y,Z)\otimes M_{\tau}^q(X,Y)\To M_{\tau}^{p+q}(X,Z)\colon x\otimes y\mapsto \tau^q(x)y.\]
	Here, on the right, we use the ungraded $\tau$ and composition in $M$. The graded $\tau$ defined above becomes automatically a morphism of monoids in $\C T_\Sigma$-bimodules. Therefore, not only $\Sigma^*$ but also $\tau_*$ is a bigraded ring morphism $\hh{\star,*}{\C T,M_{\tau}}\rightarrow \hh{\star,*}{\C T,M_{\tau}(\Sigma,\Sigma)}$. In particular, $\tau_*^{-1}\Sigma^*$ is a bigraded ring endomorphism of $\hh{\star,*}{\C T,M_{\tau}}$ and the kernel of $\operatorname{id}-\tau_*^{-1}\Sigma^*$ is a bigraded subring, since it coincides with the equalizer of $\tau_*^{-1}\Sigma^*$ and the identity map.
\end{remark}

\section{Heller's classification of triangulated structures}\label{heller_section}

Triangulated categories were introduced by Puppe \cite{puppe_formal_1962} and Verdier \cite{verdier_categories_1996} at about the same time (Verdier's thesis, although widely circulated, was only published three decades later). Puppe, however, did not consider Verdier's \emph{octahedral axiom}. Eventually, Verdier's axiomatic became standard, hence we refer to Puppe triangulated structures if we do not explicitly require Verdier's additional axiom. It is remarkable that, to this day, after failed attempts, no example of Puppe triangulated category is known where the octahedral axiom fails, although it is a common belief that the octahedral axiom does not follow from the rest.

Freyd \cite{freyd_stable_1966} showed that, if an additive category $\C T$ is admits a Puppe triangulated structure with suspension $\Sigma$, then the category of finitely presented \emph{ungraded} right $\C T$-modules $\modulesfp{\C T}$ is a \emph{Frobenius} abelian category, i.e.~it has enough projectives and injectives and both classes of objects coincide (it is the class of direct summands of representable functors). 

Assume that idemponents split in $\C T$ and $\modulesfp{\C T}$ is Frobenius abelian. Heller \cite{heller_stable_1968} classified the set of Puppe triangulated structures on $\C T$ with suspension $\Sigma$ in the following way. The \emph{stable module category} $\modulesst{\C T}$ is the quotient of $\modulesfp{\C T}$ by the ideal of morphisms factoring through a representable. Morphism sets in this category are denoted by $\homst_{\C T}$. The stable module category is a triangulated category, its suspension functor is the \emph{cosyzygy} functor $S $, defined on objects by the choice of short exact sequences in $\modulesfp{\C T}$ of the form
\[M\stackrel{j_M}\hookrightarrow\C T(-,X_M)\stackrel{q_M}\twoheadrightarrow S M.\]
Given a morphism $\{f\}\colon M\rightarrow N$ in $\modulesst{\C T}$ represented by $f$ in $\modulesfp{\C T}$, the morphism $S \{f\}$ is represented by any map $S f$ fitting in a commutative diagram
\begin{center}
\begin{tikzcd}
M \arrow[r, "j_M", hook] \arrow[d, "f"'] & \C T(-,X_M) \arrow[r, "p_M", two heads] \arrow[d] & S M \arrow[d, "S f"]\\
N \arrow[r, "j_N", hook] & \C T(-,X_N) \arrow[r, "p_N", two heads] & S N
\end{tikzcd}
\end{center}
(The class of exact triangles in $\modulesst{\C T}$ is irrelevant for our purposes.) 
Moreover, the functor $\Sigma$ extends in an essentially unique way to an exact automorphism of $\modulesfp{\C T}$ through the Yoneda inclusion. Furthermore, it passes to the quotient $\modulesst{\C T}$ as a triangulated functor, part of which is a natural isomorphism \[\sigma\colon \Sigma S \cong S \Sigma\] defined by the choice of commutative diagrams in $\modulesfp{\C T}$ of the form
\begin{center}
	\begin{tikzcd}
		\Sigma M \arrow[r, "\Sigma j_M", hook] \arrow[d, equal] & \C T(-,\Sigma X_M) \arrow[r, "\Sigma p_M", two heads] \arrow[d] & \Sigma S M \arrow[d]\\
		\Sigma M \arrow[r, "j_{\Sigma M}", hook] & \C T(-,X_{\Sigma M}) \arrow[r, "p_{\Sigma M}", two heads] & S \Sigma M
	\end{tikzcd}
\end{center}

Heller \cite[Theorem 16.4]{heller_stable_1968} showed that the (possibly empty) set of Puppe triangulated structures on $\C T$ with suspension functor $\Sigma$ is in bijection with the set of natural isomorphisms \[\delta\colon\Sigma\cong S^{3}\] which anticommute with $S $, i.e.
\[(S \delta)\sigma+\delta S=0.\]

The set of natural transformations $\Sigma\rightarrow S^{3}$ is the cohomology group
\[\hh{0}{\modulesst{\C T},\homst_{\C T}(\Sigma,S^{3})}\]
since $0$-dimensional Hochschild cohomology computes the end of the coefficient bimodule. Moreover, if we consider the natural isomorphism
\[\tau\colon\modulesst{\C T}(\Sigma,S^{3})\To
\modulesst{\C T}(\Sigma S ,S^{4})\colon\delta\mapsto -(S\delta)\sigma,\]
the anticommutativity condition is equivalent to being in the kernel of 
\begin{equation}\label{anticommutativity_cohomological}
\operatorname{id}-\tau_*^{-1}S^*\colon \hh{0}{\modulesst{\C T},\homst_{\C T}(\Sigma,S^{3})}\longrightarrow
\hh{0}{\modulesst{\C T},\homst_{\C T}(\Sigma,S^{3})}.
\end{equation}

The pull-back of the $\modulesst{\C T}$-bimodule $\homst_{\C T}(-,S^n)$ along the natural projection $\modulesfp{\C T}\twoheadrightarrow\modulesst{\C T}$ is \emph{Tate}'s $\widehat{\ext}^{n}_{\C T}$, $n\in\mathbb Z$, computed by using a complete resolution of any variable instead of just a projective or an injective resolution. It coincides with the ordinary $\ext_{\C T}^n$ for $n>0$. Composition in $\modulesst{\C T}_S$ extends the Yoneda product in $\ext_{\C T}^*$. In order to compute the coend of a $\modulesst{\C T}$-bimodule, we can equally pull it back to $\modulesfp{\C T}$. Hence, combining the previous observations, we obtain an isomorphism
\[\hh{0}{\modulesst{\C T},\homst_{\C T}(\Sigma,S^{3})}
\cong
\hh{0}{\modulesfp{\C T},\widehat{\ext}_{\C T}^3(-,\Sigma^{-1})}
.\]
Any cocycle $\varphi$ on the right gives rise to a class of exact triangles. More precisely, a diagram in $\C T$ of the form 
\[X\stackrel{f}{\To}Y\stackrel{i}{\To}C_f\stackrel{q}{\To}\Sigma X\]
is a \emph{$\varphi$-exact triangle} if
\begin{center}
\begin{tikzcd}
	\Sigma^{-1}M\arrow[r,"\alpha",hook] & \C T(-,X)\arrow[r,"{\C T(-,f)}"] &\C T(-,Y)\arrow[r, "{\C T(-,i)}"]&\C T(-,C_f)\arrow[r,"\lambda",two heads]&M,
\end{tikzcd}
\end{center}
where $M=\operatorname{Im} \C T(-,q)$  and 	
$\C T(-,q)=(\Sigma\alpha)\lambda$, is an extension representing $\varphi(M)\in\widehat{\ext}_{\C T}^3(M,\Sigma^{-1}M)$. Heller's result is equivalent to saying that this defines a bijection between Puppe triangulated stuctures in $\C T$ with suspension $\Sigma$ and Hochschild $0$-cocycles satisfying the invertibility and the anticommutativity conditions. So far, in order to check these conditions we need the natural transformation $\delta$. We will now give a cohomological characterization of the invertibility condition. The anticommutativity condition has already a characterization using the cohomology of $\modulesst{\C T}$, see \eqref{anticommutativity_cohomological}. We will later give a characterization in terms of the cohomology of $\modulesfp{\C T_\Sigma}$, in Section \ref{spectral_sequence_section} over a field, and in Section \ref{topological_section} in general.

Since $\modulesfp{\C T}$ is Frobenius abelian, then $\modulesfp{\C T_{\Sigma}}$ is graded Frobenius abelian. (The former is the degree $0$ part of the latter, and these properties are characterized in degree $0$.) Therefore, on finitely presented $\C T_{\Sigma}$-modules, we have Tate's $\widehat{\ext}_{\C T_{\Sigma}}^{n,*}$, $n\in\mathbb Z$, as above. 
The invertible functor $\Sigma\colon\modulesfp{\C T}\rightarrow\modulesfp{\C T}$ induces natural isomorphisms
\[\Sigma\colon \widehat{\ext}_{\C T}^{n}\cong \widehat{\ext}_{\C T}^{n}(\Sigma,\Sigma)\]
compatible with the extended Yoneda product
for all $n$, also denoted by $\Sigma$.
It is easy to see that, using the notation of the previous section, 
\[
\modulesfp{\C T}_\Sigma=\modulesfp{\C T_\Sigma},\qquad 
(\widehat{\ext}_{\C T}^{n})_{\Sigma}\cong \widehat{\ext}_{\C T_\Sigma}^{n,*}.\]
The extension of the Yoneda product endows $\widehat{\ext}_{\C T_{\Sigma}}^{\bullet,*}$ with a bigraded monoid structure in the category of $\modulesfp{\C T_{\Sigma}}$-bimodules, hence 
\[\hh{\star}{\modulesfp{\C T},\widehat{\ext}_{\C T_{\Sigma}}^{\bullet,*}}\]
is a trigraded ring. Its units are concentrated in Hochschild degree $\star=0$ since $\star\geq 0$, while $\bullet$ and $*$ may be arbitrary integers.

\begin{proposition}\label{Heller_ungraded}
	If $\C T$ is a small ungraded idempotent complete additive category such that $\modulesfp{\C T}$ is Frobenius abelian and $\Sigma\colon \C T\rightarrow\C T$ is an automorphism, then the set of Puppe triangulated structures on $\C T$ with suspension functor $\Sigma$ is in bijection with the units of the bigraded ring $\hh{0}{\modulesfp{\C T},\widehat{\ext}_{\C T_{\Sigma}}^{\bullet,*}}$ lying in 
	\[\hh{0}{\modulesfp{\C T},\widehat{\ext}_{\C T_{\Sigma}}^{3,-1}}\cong \hh{0}{\modulesst{\C T},\homst_{\C T}(\Sigma,S^{3})}\] 
	and satisfying Heller's anticommutativity condition.
\end{proposition}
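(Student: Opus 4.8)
The plan is to recast Heller's classification, already recalled in the preceding discussion, in terms of the ring $\hh{0}{\modulesfp{\C T},\widehat{\ext}_{\C T_{\Sigma}}^{\bullet,*}}$, the main new point being the cohomological interpretation of Heller's \emph{invertibility} condition on a natural transformation $\delta\colon\Sigma\to S^3$. First I would assemble the chain of isomorphisms already established in the text: $0$-dimensional Hochschild cohomology is the end of the coefficient bimodule, so natural transformations $\Sigma\to S^n$ are computed by $\hh{0}{\modulesst{\C T},\homst_{\C T}(\Sigma,S^n)}$; pulling back along $\modulesfp{\C T}\twoheadrightarrow\modulesst{\C T}$ and using that Tate $\widehat{\ext}$ is the pullback of $\homst$ gives $\hh{0}{\modulesfp{\C T},\widehat{\ext}_{\C T}^n(-,\Sigma^{-1})}$; and the identifications $\modulesfp{\C T}_\Sigma=\modulesfp{\C T_\Sigma}$ and $(\widehat{\ext}_{\C T}^n)_\Sigma\cong\widehat{\ext}_{\C T_\Sigma}^{n,*}$ package the $\Sigma^{-1}$-twist as the internal degree $*=-1$, yielding the displayed isomorphism $\hh{0}{\modulesfp{\C T},\widehat{\ext}_{\C T_{\Sigma}}^{3,-1}}\cong\hh{0}{\modulesst{\C T},\homst_{\C T}(\Sigma,S^{3})}$. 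Under these isomorphisms the anticommutativity condition transports verbatim to Heller's condition, as already noted via \eqref{anticommutativity_cohomological}, so that part needs no further work.

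The substance is matching "$\delta$ is a natural \emph{isomorphism}" with "the corresponding class is a \emph{unit} of the trigraded ring $\hh{\star}{\modulesfp{\C T},\widehat{\ext}_{\C T_{\Sigma}}^{\bullet,*}}$". The key observation is that the ring structure on $\hh{0}{\modulesfp{\C T},\widehat{\ext}_{\C T_{\Sigma}}^{\bullet,*}}$ is induced by the extended Yoneda (composition) product on $\widehat{\ext}_{\C T_{\Sigma}}^{\bullet,*}$, and that under the isomorphism with $\hh{0}{\modulesst{\C T},\homst_{\C T}(\Sigma^a,S^{b})}$-type groups this product is exactly \emph{composition of natural transformations} $\Sigma\xrightarrow{\delta'}S^{b}$ after $S^{b}\xrightarrow{S^{b}\delta}S^{b+3}$ (up to the bookkeeping of the graded $\Sigma$ on both sides, which only contributes signs). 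In particular the unit $1\in\hh{0}{\modulesfp{\C T},\widehat{\ext}_{\C T_{\Sigma}}^{0,0}}$ corresponds to the identity natural transformation $\Sigma\cong\Sigma$ (equivalently $\operatorname{id}\in\homst_{\C T}$ on each object), and an element of $\hh{0}{\modulesfp{\C T},\widehat{\ext}_{\C T_{\Sigma}}^{3,-1}}$ has a two-sided multiplicative inverse in the ring precisely when the associated $\delta\colon\Sigma\to S^3$ is invertible as a natural transformation, its inverse living in $\hh{0}{\modulesfp{\C T},\widehat{\ext}_{\C T_{\Sigma}}^{-3,1}}\cong\hh{0}{\modulesst{\C T},\homst_{\C T}(S^3,\Sigma)}$. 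I would prove this by the standard end-wise argument: a natural transformation between functors into $\modulesst{\C T}$ is an isomorphism iff it is objectwise an isomorphism in $\modulesst{\C T}$, and an objectwise inverse is automatically natural; translating "objectwise invertible under Yoneda composition" back to the ring gives a two-sided inverse. Since $\hh{\star}{\modulesfp{\C T},\widehat{\ext}_{\C T_{\Sigma}}^{\bullet,*}}$ has units only in Hochschild degree $\star=0$ (as $\star\ge 0$), no unit can sit outside the groups we are looking at, so the bijection of Heller restricts cleanly to the unit set in $\hh{0}{\modulesfp{\C T},\widehat{\ext}_{\C T_{\Sigma}}^{3,-1}}$ subject to anticommutativity. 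Finally I would invoke the $\varphi$-exact-triangle description already spelled out in the text to confirm that the resulting bijection is the same as Heller's, i.e.\ it sends a unit $\varphi$ to the triangulated structure whose triangles are the $\varphi$-exact ones, with no new verification needed since that dictionary was set up before the proposition.

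The main obstacle I anticipate is purely bookkeeping rather than conceptual: keeping straight the interplay between (i) the Koszul signs built into the graded extension of $\Sigma$ and of $\tau$ (cf.\ \eqref{imath} and the discussion of $M_\tau$), (ii) the identification of the internal grading $*$ with powers of $\Sigma^{-1}$, and (iii) the fact that the "composition" ring product on $\widehat{\ext}_{\C T_\Sigma}^{\bullet,*}$ already incorporates a suspension (the $\Sigma^q f$ in the definition of composition in $\C T_\Sigma$). One must check that none of these signs obstructs the statement "$\delta$ invertible $\iff$ class is a unit"; this is harmless because invertibility of a natural transformation is insensitive to multiplying it objectwise by units (in particular by signs), but it does need to be said. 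A secondary point is to make explicit that the multiplicative structure used in the statement is the one on $\widehat{\ext}_{\C T_\Sigma}^{\bullet,*}$ rather than any cup product in the Hochschild direction, so that "unit of the bigraded ring" is unambiguous; this is already clarified in the paragraph preceding the proposition, so I would simply refer to it.
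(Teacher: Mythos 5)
Your argument is correct, but for the decisive step (matching ``$\delta$ is a natural isomorphism'' with ``the class is a unit'') you take a genuinely different route from the paper. You transport the ring structure of $\hh{0}{\modulesfp{\C T},\widehat{\ext}_{\C T_{\Sigma}}^{\bullet,*}}$ across the identification with natural transformations, locate the would-be inverse in $\hh{0}{\modulesfp{\C T},\widehat{\ext}_{\C T_{\Sigma}}^{-3,1}}\cong \hh{0}{\modulesst{\C T},\homst_{\C T}(S^{3},\Sigma)}$, and use the standard facts that a pointwise invertible natural transformation has a natural pointwise inverse and that the cup product in Hochschild degree $0$ is objectwise Yoneda multiplication. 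This works, and the compatibility you need is essentially available in the text (``composition in $\modulesst{\C T}_S$ extends the Yoneda product in $\ext_{\C T}^{*}$''), but it saddles you with exactly the sign and twist bookkeeping you flag at the end, since the full multiplicative identification of the two trigraded rings is never written out. The paper sidesteps all of this with a single observation: both ``$\delta\colon\Sigma\to S^{3}$ invertible'' and ``the corresponding $0$-cocycle a unit'' are equivalent to the concrete third condition that each class $\varphi(M)$ is representable by a $3$-extension
\[\Sigma^{-1}M\hookrightarrow\C T(-,X)\rightarrow\C T(-,Y)\rightarrow\C T(-,Z)\twoheadrightarrow M\]
with representable (hence projective-injective) middle terms, i.e.\ $\Sigma^{-1}M$ is exhibited as a third syzygy of $M$ up to projectives. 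This ``common third condition'' argument is sign-free and shorter; your argument, in exchange, makes the multiplicative structure and the location of the inverse explicit, which is conceptually informative but requires you to actually verify (or at least carefully cite) that the identification of $\hh{0}{\modulesfp{\C T},\widehat{\ext}_{\C T_{\Sigma}}^{\bullet,*}}$ with graded natural transformations is a ring map up to units, not just a bijection in each bidegree. If you keep your route, that verification should be spelled out rather than deferred to ``bookkeeping''; otherwise the paper's syzygy characterization is the more economical path.
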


\begin{proof}
	Both the condition of a natural transformation $\Sigma\rightarrow S^3$ being invertible or the corresponding cocycle on the left being a unit can be translated in saying that the extension class $\varphi(M)$ associated to a finitely presented right $\C T$-module $M$ can be represented by an extension with representable (i.e.~projective-injective) middle terms
	\[\Sigma^{-1}M\hookrightarrow\C T(-,X)\rightarrow\C T(-,Y)\rightarrow\C T(-,Z)\twoheadrightarrow M.\]
	(In general, only either the two ones on the left or the two ones on the right can be taken to be representable.)
\end{proof}

This new glimpse at Heller's result allows to place the set of Puppe triangulated structures in a smaller recipient. First, note that Proposition \ref{graded_ungraded_long_exact_sequence} yields an exact sequence
\begin{equation}\label{exact_cohomology_sequence}
	\begin{tikzcd}[row sep=5mm]
		0\arrow[d, hook]\\
		\hh{0,-1}{\modulesfp{\C T_\Sigma},\widehat{\ext}_{\C T_\Sigma}^{3,*}}\arrow[d, "i^*"]\\
		\hh{0,-1}{\modulesfp{\C T},\widehat{\ext}_{\C T_\Sigma}^{3,*}}\arrow[d, "\operatorname{id}-\Sigma^{-1}_*\Sigma^*"]\\ 
		\hh{0,-1}{\modulesfp{\C T},\widehat{\ext}_{\C T_\Sigma}^{3,*}}
	\end{tikzcd}
\end{equation}
where the bottom map coincides with
\begin{equation*}
\begin{tikzcd}[row sep=5mm]
\hh{0}{\modulesfp{\C T},{\ext}_{\C T}^{3}(-,\Sigma^{-1})}\arrow[d, "\operatorname{id}+\Sigma^{-1}_*\Sigma^*"]\\ 
\hh{0}{\modulesfp{\C T},{\ext}_{\C T}^{3}(-,\Sigma^{-1})}
\end{tikzcd}
\end{equation*}
The apparent change of sign in the morphism is motivated by the definition of the graded bimodule morphism $\Sigma$ on the coefficients.

We now consider the trigraded ring
\[\hh{\star,*}{\modulesfp{\C T_\Sigma},\widehat{\ext}_{\C T_{\Sigma}}^{\bullet,*}},\]
which, as above, only contains units in the Hochschild degree $\star=0$ subring.

\begin{corollary}\label{Heller_graded}
	If $\C T$ is a small ungraded idempotent complete additive category such that $\modulesfp{\C T}$ is Frobenius abelian and $\Sigma\colon \C T\rightarrow\C T$ is an automorphism, then the	 set of Puppe triangulated structures on $\C T$ with suspension functor $\Sigma$ is in bijection with the units of the bigraded ring $\hh{0,*}{\modulesfp{\C T_\Sigma},\widehat{\ext}_{\C T_{\Sigma}}^{\bullet,*}}$ lying in 
	\[\hh{0,-1}{\modulesfp{\C T_\Sigma},\widehat{\ext}_{\C T_{\Sigma}}^{3,*}}\] whose image along $i^*$ satisfies Heller's anticommutativity condition.
\end{corollary}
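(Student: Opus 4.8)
The plan is to deduce the statement from Proposition~\ref{Heller_ungraded} by transporting Heller's classification into graded Hochschild cohomology along the map $i^*$ of the exact sequence~\eqref{exact_cohomology_sequence}. By Proposition~\ref{Heller_ungraded} the Puppe triangulated structures on $(\C T,\Sigma)$ are in bijection with the units of the bigraded ring $\hh{0}{\modulesfp{\C T},\widehat{\ext}_{\C T_{\Sigma}}^{\bullet,*}}$ lying in bidegree $(3,-1)$ and satisfying Heller's anticommutativity condition, so it is enough to verify: (1) along $i^*$ the notion of ``unit'' in the graded ring $\hh{0,*}{\modulesfp{\C T_{\Sigma}},\widehat{\ext}_{\C T_{\Sigma}}^{\bullet,*}}$ and in the ungraded ring $\hh{0}{\modulesfp{\C T},\widehat{\ext}_{\C T_{\Sigma}}^{\bullet,*}}$ correspond; (2) Heller's anticommutativity condition imposed on $i^*\varphi$ is exactly the one occurring in Proposition~\ref{Heller_ungraded}; and (3) the classifying cocycle of every Puppe triangulated structure already lies in the image of $i^*$.

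Point~(1) is formal. By Proposition~\ref{graded_ungraded_long_exact_sequence} in Hochschild degree $0$, where $\hh{-1,*}{\C T,M_{\tau}}=0$, the map $i^*$ is injective and its image is the fixed locus of $\Sigma^{-1}_*\Sigma^*$; by Remark~\ref{multiplicative_properties} both $i^*$ and $\Sigma^{-1}_*\Sigma^*$ are unital ring homomorphisms, so this fixed locus is a subring. Consequently $\varphi$ is a unit of the graded ring if and only if $i^*\varphi$ is a unit of the ungraded ring: the forward implication is clear, and if $\psi$ is a two-sided inverse of $i^*\varphi$ then, applying $\Sigma^{-1}_*\Sigma^*$ to $\psi\cdot i^*\varphi=1=i^*\varphi\cdot\psi$ and using $\Sigma^{-1}_*\Sigma^*(i^*\varphi)=i^*\varphi$, we find that $\Sigma^{-1}_*\Sigma^*\psi$ is again a two-sided inverse, hence equal to $\psi$; thus $\psi$ lies in the fixed locus, hence in the image of $i^*$, and its $i^*$-preimage inverts $\varphi$. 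Point~(2) is immediate: after the identification $\hh{0}{\modulesfp{\C T},\widehat{\ext}_{\C T_{\Sigma}}^{3,-1}}\cong\hh{0}{\modulesst{\C T},\homst_{\C T}(\Sigma,S^{3})}$ both conditions say that $i^*\varphi$, regarded as a natural transformation $\delta\colon\Sigma\to S^{3}$, satisfies $(S\delta)\sigma+\delta S=0$.

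Point~(3) is the only non-formal step. Since $\varphi$ is a unit in bidegree $(3,-1)$, Heller's dictionary presents it as an \emph{isomorphism} $\delta\colon\Sigma\cong S^{3}$ of exact endofunctors of $\modulesst{\C T}$, and I would use this isomorphism to compare the operator $\Sigma^{-1}_*\Sigma^*$ of~\eqref{exact_cohomology_sequence} with the operator $\tau_*^{-1}S^*$ of~\eqref{anticommutativity_cohomological}. Writing $S^{3}=S\circ S\circ S$ and iterating the structure isomorphism $\sigma\colon\Sigma S\cong S\Sigma$ three times to identify $\Sigma S^{3}$ with $S^{3}\Sigma$, the isomorphism $\delta$ turns the action of $\Sigma^{-1}_*\Sigma^*$ on $\delta$ into the threefold iterate of the action on $\delta$ of the sign-free variant of $\tau_*^{-1}S^*$. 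Heller's anticommutativity $(S\delta)\sigma=-\delta S$ says exactly that this sign-free variant sends $\delta$ to $-\delta$, so its threefold iterate sends $\delta$ to $(-1)^{3}\delta=-\delta$; matching with the sign convention recorded below~\eqref{exact_cohomology_sequence}, this is precisely the assertion that $\delta$ lies in the kernel of $\operatorname{id}-\Sigma^{-1}_*\Sigma^*$, i.e.\ in the image of $i^*$. The geometric content of (3) is the familiar fact that the suspension $\Sigma$ carries $\varphi$-exact triangles, i.e.\ the distinguished triangles of the corresponding Puppe triangulated structure, to $\varphi$-exact triangles. Combining (1)--(3) with Proposition~\ref{Heller_ungraded} yields the bijection.

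I expect the main difficulty to be entirely in the sign bookkeeping of point~(3): the definition of composition in $\C T_{\Sigma}$, the graded bimodule morphism $\Sigma$ on $\widehat{\ext}_{\C T_{\Sigma}}^{\bullet,*}$ (which carries the sign $(-1)^{n}$ in internal degree $n$), the explicit sign in the twist $\tau$ of~\eqref{anticommutativity_cohomological}, and the rotation of triangles all contribute Koszul or rotation signs, and one must check that three occurrences of $\sigma$ supply exactly the sign $(-1)^{3}$ needed to turn anticommutativity into the kernel condition.
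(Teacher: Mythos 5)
Your proposal is correct and follows essentially the same route as the paper: reduce to Proposition~\ref{Heller_ungraded}, use the fixed-locus/equalizer description of the image of $i^*$ from Proposition~\ref{graded_ungraded_long_exact_sequence} and Remark~\ref{multiplicative_properties} to see that $i^*$ reflects units, and then show every Heller element is killed by $\operatorname{id}-\Sigma^{-1}_*\Sigma^*$ by identifying this operator, via $\delta\colon\Sigma\cong S^3$ and the iterated $\sigma$, with (a signed version of) the cube of the operator in the anticommutativity condition \eqref{anticommutativity_cohomological}. The only difference is how the sign bookkeeping is packaged: you track an explicit $(-1)^3$ through a sign-free variant and the sign noted below \eqref{exact_cohomology_sequence}, whereas the paper absorbs the signs into $\tau$ and concludes from the containment of the kernel of $\operatorname{id}-\tau^{-1}_*S^*$ in that of $\operatorname{id}-\tau^{-3}_*(S^3)^*$.
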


\begin{proof}
	We are in the conditions of Remark \ref{multiplicative_properties}, hence $\hh{0,*}{\modulesfp{\C T_\Sigma},\widehat{\ext}_{\C T_{\Sigma}}^{\bullet,*}}$ is the equalizer of a pair of ring endomorphisms of $\hh{0,*}{\modulesfp{\C T},\widehat{\ext}_{\C T_{\Sigma}}^{\bullet,*}}$. This shows that the inclusion $i^*$ not only preserves but also reflects units.
	
	It remains to check that any Heller element $\delta\in \hh{0}{\modulesfp{\C T},\widehat{\ext}_{\C T_{\Sigma}}^{3,-1}}$ is in the kernel of $\operatorname{id}-\Sigma^{-1}_*\Sigma^*$. By the cohomological characterization of Heller's anticommutativity condition, see \eqref{anticommutativity_cohomological}, $\delta$ is in the kernel of $\operatorname{id}-\tau^{-1}_*S^*$. Using the natural isomorphism $\Sigma\cong S^3$ provided by $\delta$, we have a square of functors commuting up to natural isomorphism and two commutative squares of bimodules as follows,
	\begin{center}
		\begin{tikzcd}
			\modulesfp{\C T}\arrow[r,"\Sigma"]\arrow[d, two heads]&\modulesfp{\C T}\arrow[d, two heads]\\
			\modulesst{\C T}\arrow[r,"S^3"]&\modulesst{\C T}
		\end{tikzcd}
	\qquad
		\begin{tikzcd}
			{\ext}_{\C T}^{3}(-,\Sigma^{-1})\arrow[r,"-\Sigma"]\arrow[d, "\cong"']&{\ext}_{\C T}^{3}(\Sigma,-)\arrow[d, "\cong"]\\
			\widehat{\ext}_{\C T_\Sigma}^{3,-1}\arrow[r,"\Sigma"]\arrow[d, "\cong"']&\widehat{\ext}_{\C T_\Sigma}^{3,-1}(\Sigma,\Sigma)\arrow[d, "\cong"]\\
			\homst_{\C T}(\Sigma, S^3)\arrow[r,"\tau^3"]&\homst_{\C T}(\Sigma S^3, S^6)
		\end{tikzcd}
	\end{center}
	Hence we can identify $\operatorname{id}-\Sigma^{-1}_*\Sigma^*$ with $\operatorname{id}-\tau^{-3}_*(S^3)^*$. The result follows since the kernel of $\operatorname{id}-\tau^{-1}_*S^*$ is clearly contained in the kernel of $\operatorname{id}-\tau^{-3}_*(S^3)^*$.
\end{proof}

Since Heller's anticommutativity condition reduces to being in the kernel of a morphism, we can therefore think that the set of Puppe triangulated structures is a `locally closed' subset of the `affine space' $\hh{0,-1}{\modulesfp{\C T_\Sigma},\widehat{\ext}_{\C T_{\Sigma}}^{3,*}}$. (This is literal in the cases where the latter is a finite dimensional vector space over some field $k$, and this happens under appropriate finiteness assumptions on a $k$-linear $\C T$.) We will later give a neater cohomological characterization of the anticommutativity condition in terms of the cohomology of $\modulesfp{\C T_\Sigma}$ alone, see Sections \ref{spectral_sequence_section} and \ref{topological_section}.

\section{Toda brackets}\label{toda_brackets_section}

Together with his classification of Puppe triangulated structures in terms of natural transformations, recalled in the previous section, Heller embedded these triangulated structures into a set of operations called Toda brackets \cite[Theorem 13.2]{heller_stable_1968}. In this section we see that our new cohomological approach to Heller's theory fits perfectly with the Toda bracket perspective.

\begin{definition}\label{Toda_bracket}
	Let $\C{T}$ be an additive category and $\Sigma\colon\C{T}\rightarrow \C{T}$ an automorphism. A \emph{Toda bracket} $\langle -,-,-\rangle$ is an operation which sends three composable morphisms
	\[X\st{f}\To Y\st{g}\To Z\st{h}\To T\]
	with $g\cdot f=0$ and $h\cdot g=0$ to an element
	\[\langle h,g,f\rangle
	\in\frac{\C{T}( X,\Sigma^{-1}T)}{(\Sigma^{-1}h)\cdot\C{T}( X, \Sigma^{-1}Z)+\C{T}(Y,\Sigma^{-1}T)\cdot f},\]
	often regarded as a subset $\langle h,g,f\rangle\subset \C{T}(X,\Sigma^{-1}T)$.
	The following axioms must hold, whenever the Toda brackets are defined,
	\begin{align*}
		\langle i, h,g\rangle\cdot  f&\subset\langle i, h,g\cdot f\rangle,&		\langle i,h\cdot g, f\rangle&\supset\langle i\cdot h,g,f\rangle,\\
		\langle i, h,g\cdot f\rangle&\subset\langle i, h\cdot g, f\rangle,&
		\langle i\cdot h, g, f\rangle&\supset (\Sigma^{-1}i)\cdot \langle h,g,f\rangle.
	\end{align*}
	We say that a Toda bracket is \emph{stable} if in addition
	\begin{align*}
		\langle \Sigma h,\Sigma g,\Sigma f\rangle&=- \Sigma \langle h,g,f\rangle.
	\end{align*}
	The set of Toda brackets on $(\C T,\Sigma)$ is an abelian group with sum given by pointwise addition (even a $k$-module if our category is $k$-linear, since Toda brackets can be rescaled). It will be denoted by
	\[TB(\C{T},\Sigma).\]
	Stable Toda brackets form a subgroup (or submodule) denoted by
	\[TB^s(\C{T},\Sigma).\]
\end{definition}

Usually, the recipient of a Toda bracket is equivalently taken to be the isomorphic group
\[\frac{\C{T}(\Sigma X,T)}{h\cdot\C{T}(\Sigma X, Z)+\C{T}(\Sigma Y,T)\cdot(\Sigma f)}.\]
Our convention here, however, fits better with the rest of this paper. 

The well-known Toda bracket of a triangulated structure is stable, and it is determined by the previous laws and the fact that the Toda bracket of an exact triangle contains the identity. Conversely, this also defines the triangulated structure from the Toda bracket.

The last result of the previous section places the set of Puppe triangulated structures on $(\C T, \Sigma)$ within a graded Hochschild cohomology group, smaller than Heller's set of natural transformations, which has also been reinterpreted as a larger ungraded Hochschild cohomology group. In the following result we show that Heller's set of natural transformations is in bijection with the set of Toda brackets, and stable ones are in bijection with our smaller Hochschild cohomology group.

\begin{theorem}\label{Toda_brackets_and_cohomology}
	Given an idempotent complete additive category $\C T$ such that the category $\modulesfp{\C T}$ is abelian and an automorphism $\Sigma\colon\C T\rightarrow\C T$, there are isomorphisms compatible with the inclusions,
	\begin{align*}
		\hh{0}{\modulesfp{\C T},\ext_{\C T}^3(-,\Sigma^{-1})}&\cong TB(\C{T},\Sigma),\\
		\hh{0,-1}{\modulesfp{\C T_\Sigma},\ext_{\C T_{\Sigma}}^{3,*}}&\cong TB^s(\C{T},\Sigma).
	\end{align*}
\end{theorem}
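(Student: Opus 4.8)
The plan is to construct the first isomorphism explicitly and then derive the second by restricting to the "stable" subgroups on both sides, using Proposition \ref{graded_ungraded_long_exact_sequence} to keep track of compatibility. For the first isomorphism, recall that a class in $\hh{0}{\modulesfp{\C T},\ext_{\C T}^3(-,\Sigma^{-1})}$ is exactly an end element: a natural assignment $M\mapsto\varphi(M)\in\ext_{\C T}^3(M,\Sigma^{-1}M)$ for every finitely presented right $\C T$-module $M$, natural in $M$. The discussion preceding Proposition \ref{Heller_ungraded} already explains how such a $\varphi$ produces, for any three composable maps $X\st{f}\To Y\st{g}\To Z\st{h}\To T$ with $gf=0$ and $hg=0$, a Yoneda $3$-extension: one splices the representable complex $\C T(-,X)\To\C T(-,Y)\To\C T(-,Z)$ (which is a complex precisely because $gf=0$, $hg=0$) with a chosen projective cover of $\coker\C T(-,h)$ and a chosen presentation, producing a module $M$ together with a class in $\ext^3_{\C T}(M,\Sigma^{-1}M)$ to which $\varphi(M)$ is compared; the standard recipe for extracting a bracket from a Yoneda extension then yields a coset in $\C T(X,\Sigma^{-1}T)$ modulo $(\Sigma^{-1}h)\C T(X,\Sigma^{-1}Z)+\C T(Y,\Sigma^{-1}T)f$. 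I would define $\langle h,g,f\rangle_\varphi$ to be this coset. The naturality of $\varphi$ is exactly what makes the four inclusion axioms of a Toda bracket hold: each axiom corresponds to comparing the modules produced by two related triples via a map of modules, and functoriality of $\ext^3$ translates this into the stated inclusion of cosets.

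Next I would check that $\varphi\mapsto\langle-,-,-\rangle_\varphi$ is a homomorphism of $k$-modules — linear in $\varphi$ because the Yoneda-extension-to-bracket assignment is additive in the extension class — and that it is injective: if all brackets vanish then, testing on triples coming from the canonical presentations $0\To\Sigma^{-1}M\To\C T(-,X)\To\C T(-,Y)\To\C T(-,Z)\To M\To 0$ realizing an arbitrary class in $\ext^3_{\C T}(M,\Sigma^{-1}M)$, one recovers that $\varphi(M)=0$ for all $M$. For surjectivity I would run the construction backwards: given a Toda bracket $\langle-,-,-\rangle$, for each $M$ pick a $3$-step representable resolution ending in $M$, read off a distinguished triple, and let $\varphi(M)\in\ext^3_{\C T}(M,\Sigma^{-1}M)$ be the extension class forced by requiring $\operatorname{id}\in\langle h,i,f\rangle$ (the usual "$\varphi$-exact triangle" normalization); the Toda bracket axioms guarantee this is independent of the chosen resolution and natural in $M$, hence a genuine Hochschild $0$-cocycle. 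This is the step I expect to be the main obstacle: showing well-definedness and naturality of the reconstructed $\varphi$ requires carefully comparing two $3$-extensions of $M$ by $\Sigma^{-1}M$ using the bracket axioms together with the fact that any two representable $3$-resolutions are homotopy equivalent, and bookkeeping the cosets through that homotopy is where all the real work lies.

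Finally, for the second isomorphism I would identify both sides as the fixed points of an involution-like endomorphism on the corresponding "ungraded" objects. On the Toda-bracket side, $TB^s(\C T,\Sigma)\subset TB(\C T,\Sigma)$ is by definition the kernel of $\langle-,-,-\rangle\mapsto\langle-,-,-\rangle+\Sigma^{-1}\langle\Sigma-,\Sigma-,\Sigma-\rangle$ (equivalently $\operatorname{id}$ minus the operation induced by $\Sigma$, up to the sign dictated by the stability axiom). On the cohomology side, the exact sequence \eqref{exact_cohomology_sequence}, which is the specialization of Proposition \ref{graded_ungraded_long_exact_sequence} with $M=\ext^3_{\C T}(-,\Sigma^{-1})$ and $\tau=\Sigma$, exhibits $\hh{0,-1}{\modulesfp{\C T_\Sigma},\ext_{\C T_\Sigma}^{3,*}}$ as the kernel of $\operatorname{id}-\Sigma^{-1}_*\Sigma^*$ acting on $\hh{0}{\modulesfp{\C T},\ext^3_{\C T}(-,\Sigma^{-1})}$ (with the sign change noted there). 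It therefore suffices to check that the first isomorphism intertwines these two endomorphisms — i.e. that the bracket attached to $\Sigma^{-1}_*\Sigma^*\varphi$ is $-\Sigma^{-1}\langle\Sigma-,\Sigma-,\Sigma-\rangle_\varphi$ — which is a direct unwinding of how $\Sigma$ acts on representable resolutions and on the Yoneda product, together with the Koszul sign built into the graded extension of $\Sigma$. Since the first isomorphism is compatible with the inclusion $\modulesfp{\C T}\subset\modulesfp{\C T}$, passing to kernels yields the claimed isomorphism $\hh{0,-1}{\modulesfp{\C T_\Sigma},\ext_{\C T_\Sigma}^{3,*}}\cong TB^s(\C T,\Sigma)$, compatible with the inclusions into the ungraded versions.
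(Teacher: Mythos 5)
The step that fails is your reconstruction of $\varphi$ from a given Toda bracket. The recipe ``let $\varphi(M)$ be the extension class forced by requiring $\operatorname{id}\in\langle h,i,f\rangle$'' borrows the $\varphi$-exact-triangle normalization of Section \ref{heller_section}, but Theorem \ref{Toda_brackets_and_cohomology} concerns an arbitrary Toda bracket on an arbitrary additive pair $(\C T,\Sigma)$ with $\modulesfp{\C T}$ abelian: there is no triangulation in sight, the identity need not lie in any bracket, and as written your recipe never consults the values of the given bracket, so it cannot distinguish two different brackets. What one must do instead is evaluate the bracket on the triple $U_3\stackrel{u_3}{\To}U_2\stackrel{u_2}{\To}U_1\stackrel{u_1}{\To}U_0$ read off a representable resolution of $M$, note that its recipient is naturally $\hom_{\C T}(\C T(-,U_3),\Sigma^{-1}M)$ modulo coboundaries, and use $\langle u_1,u_2,u_3\rangle\cdot u_4\subset\langle u_1,u_2,u_3u_4\rangle=\langle u_1,u_2,0\rangle\ni 0$ to lift the value to a class in $\ext^3_{\C T}(M,\Sigma^{-1}M)$; independence of the resolution and naturality in $M$ then still have to be proved, and that is where the four axioms genuinely enter. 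A companion slip occurs in your injectivity test: not every class of $\ext^3_{\C T}(M,\Sigma^{-1}M)$ is represented by an exact sequence with all three middle terms representable (the paper points this out in the proof of Proposition \ref{Heller_ungraded}; full representability is exactly the unit condition of that proposition). Injectivity does hold, but because the comparison map from $\ext_{\C T}^3(M,\Sigma^{-1}M)$ to the bracket recipient of the truncated resolution is injective (coboundaries are automatically cocycles), not because of realizability.

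More structurally, the crux of the theorem is the equivalence between naturality in $M$ alone (a Hochschild $0$-cocycle on $\modulesfp{\C T}$) and naturality with respect to all chain maps of three-term complexes of representables, which is what the four bracket axioms encode; note that the axioms compare triples with the same associated module $M$ but different comparison maps, so naturality of $\varphi$ on $\modulesfp{\C T}$ does not by itself hand you the stated inclusions of cosets. Your sketch treats both directions of this equivalence as routine, whereas the paper makes it the entire proof: it identifies $TB(\C{T},\Sigma)$ with $\hh{0}{\chain_{\geq 0}^{\leq 3}(\C T),H^3(-,\Sigma^{-1}H_0(-))}$ and transports this end through the naive-truncation adjunction, a vanishing lemma for an auxiliary image bimodule, the passage to $\derived_{\geq 0}(\C T)$, and the reflection $H_0$ onto $\modulesfp{\C T}$, rather than constructing mutually inverse maps by hand. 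Your treatment of the second isomorphism (stable brackets as the kernel of $\operatorname{id}-\Sigma^{-1}_*\Sigma^*$ via Proposition \ref{graded_ungraded_long_exact_sequence} and \eqref{exact_cohomology_sequence}) coincides with the paper's and is fine once the first isomorphism is secured.
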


\begin{proof}
	The second isomorphism follows from the first one together with Corollary \ref{graded_ungraded_long_exact_sequence} and the exact sequence \eqref{exact_cohomology_sequence} since under the first isomorphism, defined below, the stability condition for Toda brackets is equivalent to being in the kernel of $1-\Sigma_*^{-1}\Sigma^*$.

	We now start with the first cohomological interpretation of Toda brackets. A sequence of maps in $\C T$
	\[X\st{f}\To Y\st{g}\To Z\st{h}\To T\]
	with $g\cdot f=0$ and $h\cdot g=0$ is the same as a chain complex $P_*$ of projectives in $\modulesfp{\C T}$ concentrated in degrees from $0$ to $3$, and the Toda bracket is an element $\varphi(P_*)\in H^3(P_*,\Sigma^{-1}H_0(P_*))$. Here and below $\Sigma$ is not the classical suspension of chain complexes, but the exact invertible endofunctor $\Sigma$ of $\modulesfp{\C T}$. The four properties a Toda bracket amount to saying that $\varphi(P_*)$ is natural with respect to chain maps $Q_*\rightarrow P_*$ between such complexes. Hence, $\varphi$ is the same as a cocycle in
	\[\hh{0}{\chain_{\geq 0}^{\leq 3}(\C T),H^3(-,\Sigma^{-1}H_0(-))},\]
	where $\chain_{\geq 0}^{\leq 3}(\C T)$ is the category of chain complexes as above. We will now define a sequence of isomorphisms
	\begin{align*}
		\hh{0}{\chain_{\geq 0}^{\leq 3}(\C T),H^3(-,\Sigma^{-1}H_0(-))}&\cong \hh{0}{\chain_{\geq 0}(\C T),H^3( w_{\leq 3}(-),\Sigma^{-1}H_0(-))}\\
		&\cong \hh{0}{\chain_{\geq 0}(\C T),H^3(-,\Sigma^{-1}H_0(-))}\\
		&\cong \hh{0}{\derived_{\geq 0}(\C T),
		\derived_{\geq 0}(\C T)(-,\Sigma^{-1}H_0(-)[3])}\\
		&\cong \hh{0}{\modulesfp{\C T},\ext_{\C T}^3(-,\Sigma^{-1})}.
	\end{align*}
	Here $\chain_{\geq 0}(\C T)$ is the category of non-negative chain complexes of projectives in $\modulesfp{\C T}$. It fits in an adjoint pair
	\begin{center}
		\begin{tikzcd}[column sep=30mm]
			\chain_{\geq 0}^{\leq 3}(\C T)\arrow[r, shift left, "\text{inclusion}"]&\chain_{\geq 0}(\C T)\arrow[l, shift left, "\text{naive truncation }w_{\leq 3}"]
		\end{tikzcd}
	\end{center}
	which yields the first isomorphism, induced by $w_{\geq 3}$, compare \cite[Theorem 5.10]{muro_functoriality_2006}. 
	
	The $\chain_{\geq 0}(\C T)$-bimodule $H^3(w_{\leq 3}(-),\Sigma^{-1}H_0(-))$ is
	\begin{multline*}
	H^3(w_{\leq 3}(P_*),\Sigma^{-1}H_0(Q_*))\\=\coker[\hom_{\C T}(P_2,\Sigma^{-1}H_0(Q_*))\rightarrow\hom_{\C T}(P_3,\Sigma^{-1}H_0(Q_*))],
	\end{multline*}
	hence we have a short exact sequence of $\chain_{\geq 0}(\C T)$-bimodules
	\[H^3(-,\Sigma^{-1}H_0(-))\hookrightarrow H^3(w_{\leq 3}(-),\Sigma^{-1}H_0(-))\twoheadrightarrow M,\]
	where $M$ is defined by the images
	\[M(P_*,Q_*)=\im[\hom_{\C T}(P_3,\Sigma^{-1}H_0(Q_*))\rightarrow\hom_{\C T}(P_4,\Sigma^{-1}H_0(Q_*))].\]
	We now check that
	\[\hh{0}{\chain_{\geq 0}(\C T),M}=0,\]
	so the second isomorphism follows from the long exact cohomology sequence associated to the previous short exact sequence of coefficient bimodules. Indeed, given a cocycle $\psi\in \hh{0}{\chain_{\geq 0}(\C T),M}$ and an object $P_*$ in $\chain_{\geq 0}(\C T)$, we can consider the complex $Q_*$ which reduces to $P_4$ in degrees $3$ and $4$ (the only possibly non-trivial differential being the identity) and the only map $f\colon Q_*\rightarrow P_*$ which is the identity in degree $4$. The cocycle condition says that
	\[f\cdot \psi(Q_*)=\psi(P_*)\cdot f\in M(Q_*,P_*)=\hom_{\C T}(P_4,\Sigma^{-1}H_0(P_*)).\]
	Moreover, the map $M(P_*,P_*)\rightarrow M(Q_*,P_*)$ defined by right multiplication by $f$ is the inclusion of the image, hence injective, and $\psi(Q_*)=0$ since $H_0(Q_*)=0$, therefore $\psi(P_*)=0$, so $\psi=0$. 
	
	The dervied category $\derived_{\geq 0}(\C T)$ of non-negative chain complexes of finitely presented $\C T$-modules comes equipped with a canonical functor $\chain_{\geq 0}(\C T)\rightarrow \derived_{\geq 0}(\C T)$. The pull-back along this functor of the 
	bimodule
	$\derived_{\geq 0}(\C T)(-,\Sigma^{-1}H_0(-)[3])$, where $[3]$ denotes the $3$-fold shift in the derived category, is $H^3(-,\Sigma^{-1}H_0(-))$.  Moreover, the canonical functor factors through an equivalence from the (quotient) homotopy category of the source to the target. Hochschild cohomology computes ends, 
	and they can be equally computed in $\chain_{\geq 0}(\C T)$ or in the `quotient' $\derived_{\geq 0}(\C T)$. Hence we obtain the third isomorphism, induced by the previous canonical functor.
	
	We now consider the adjunction
	\begin{center}
	\begin{tikzcd}[column sep=30mm]
		\derived_{\geq 0}(\C T)\arrow[r, shift left, "H_0"]&\modulesfp{\C T}.\arrow[l, shift left, "\text{inclusion in degree }0"]
	\end{tikzcd}
	\end{center}
	It is actually a reflection. The left $\derived_{\geq 0}(\C T)$-module structure on the coefficient bimodule 
	$\derived_{\geq 0}(\C T)(-,\Sigma^{-1}H_0(-)[3])$ factors through $H_0$, hence one can check as in \cite[Theorem 5.4]{muro_functoriality_2006} that the degree $0$ inclusion induces an isomorphism in cohomology, the last one above.
	
	Unwrapping the previous isomorphisms, the Toda bracket associated to a cocycle $\varphi\in \hh{0}{\modulesfp{\C T},\ext_{\C T}^3(-,\Sigma^{-1})}$ can be computed as follows. Let  
	\[X\st{f}\To Y\st{g}\To Z\st{h}\To T\]
	be a sequence of maps in $\C T$
	with $g\cdot f=0$ and $h\cdot g=0$. We pick up a projective resolution $\C T(-,U_*)$ of $M=\coker\C T(-,h)$ in $\modulesfp{\C T}$. By standard homological algebra, there is a map of complexes, unique up to chain homotopy,
	\begin{center}
		\begin{tikzcd}[column sep=6.5mm]
			\cdots\arrow[r]&
			0\arrow[r]\arrow[d]&
			\C T(-,X)\arrow[r,"{\C T(-,f)}"]\arrow[d]&
			\C T(-,Y)\arrow[r, "{\C T(-,g)}"]\arrow[d]&
			\C T(-,Z)\arrow[r, "{\C T(-,h)}"]\arrow[d]&
			\C T(-,T)\arrow[d]\\
			\cdots\arrow[r]&
			\C T(-,U_4)\arrow[r]&
			\C T(-,U_3)\arrow[r]&
			\C T(-,U_2)\arrow[r]&
			\C T(-,U_1)\arrow[r]&
			\C T(-,U_0)&
		\end{tikzcd}
	\end{center}
	which induces the identity in $0$-dimensional homology (it is $M$ in both cases). This chain map induces a morphism in $3$-dimensional cohomology with coefficients in $\Sigma^{-1}M$,
	\[\ext_{\C T}^3(M,\Sigma^{-1}M)\To 
	\frac{\C{T}( X,\Sigma^{-1}T)}{(\Sigma^{-1}h)\cdot\C{T}( X, \Sigma^{-1}Z)+\C{T}(Y,\Sigma^{-1}T)\cdot f}.\]
	The Toda bracket $\langle f,g,h\rangle$ is the image of $\varphi(M)$ along this morphism.
\end{proof}

\section{A local-to-global spectral sequence}\label{spectral_sequence_section}

We now construct the spectral sequence which defines the first obstructions for the existence of an enhancement of a triangulated category over a field.

\begin{proposition}\label{spectral_sequence}
	If $\C C$ is a small graded category over a field $k$, there is a first quadrant cohomological spectral sequence of graded $k$-modules
	\[E_2^{p,q}=\hh{p,*}{\modulesfp{\C C},\ext_{\C C}^{p,*}}\Longrightarrow\hh{p+q,*}{\C C,\C C}.\]
\end{proposition}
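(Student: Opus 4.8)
The plan is to obtain the spectral sequence by filtering a bicomplex of $\C C$-bimodules that resolves the diagonal. Since $k$ is a field every graded $k$-module is free, so $\C C$ is locally projective, the bar complex $B_\star(\C C)\To\C C$ is an honest projective resolution of the diagonal $\C C$-bimodule, and $\hh{\star,*}{\C C,\C C}=\ext_{\C C^{\env}}^{\star,*}(\C C,\C C)$. Let $\C A$ be a small skeleton of $\modulesfp{\C C}$ — it is essentially small, being the category of cokernels of maps between finite sums of shifts of representables — with Yoneda embedding $y\colon\C C\hookrightarrow\C A$; it is again locally projective over $k$ and has enough projectives, the (summands of) representables. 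Restriction along $y^{\env}\colon\C C^{\env}\To\C A^{\env}$ carries the diagonal $\C A$-bimodule $\hom_{\C C}(-,-)$ to the diagonal $\C C$-bimodule $\C C$.

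I would then build a bicomplex $K_{\star,\star}$ of projective $\C C$-bimodules together with a quasi-isomorphism $\operatorname{Tot}(K)\stackrel{\sim}{\To}\C C$, in which the first index $p$ records a simplicial (bar) degree over $\modulesfp{\C C}$ and the second index $q$ a resolution degree by representables. The natural candidate starts from the bar resolution $B_\star(\C A)$ of the diagonal $\C A$-bimodule, whose degree $p$ term is indexed by chains $M_0\leftarrow\cdots\leftarrow M_p$ in $\modulesfp{\C C}$ and has the shape $\C A(M_0,-)\boxtimes\C A(-,M_p)$ tensored over $k$ with $\hom_{\C C}(M_1,M_0)\otimes\cdots\otimes\hom_{\C C}(M_p,M_{p-1})$: restrict it along $y^{\env}$ to a resolution of the diagonal $\C C$-bimodule $\C C$ by bimodules which are no longer projective, then replace the non-projective factors by projective resolutions by representables and totalise. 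Over a field all terms so produced are genuinely projective $\C C$-bimodules, so $\hh{\star,*}{\C C,\C C}=H^{\star,*}\hom_{\C C^{\env}}(\operatorname{Tot}(K),\C C)$.

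Filtering $\operatorname{Tot}(K)$ by the bar degree $p$ produces a spectral sequence from the cochain bicomplex $\hom_{\C C^{\env}}(K_{\star,\star},\C C)$. Computing cohomology first in the resolution direction, bar degree $p$ contributes the $q$-th cohomology of $\hom_{\C C^{\env}}(K_{p,\bullet},\C C)$, which one identifies, in each internal degree, with $\prod_{M_0,\dots,M_p}\hom_k^*(\hom_{\C C}(M_1,M_0)\otimes\cdots\otimes\hom_{\C C}(M_p,M_{p-1}),\ext_{\C C}^{q,*}(M_p,M_0))=\hc{p,*}{\modulesfp{\C C},\ext_{\C C}^{q,*}}$, the remaining bar differential being the Hochschild differential with coefficients in the Yoneda-product bimodule $\ext_{\C C}^{\bullet,*}$. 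Hence $E_2^{p,q}=\hh{p,*}{\modulesfp{\C C},\ext_{\C C}^{q,*}}$; since $p\geq0$ and $\ext_{\C C}^q=0$ for $q<0$, the spectral sequence is first-quadrant and converges to $\hh{p+q,*}{\C C,\C C}$, the edge morphism $\hh{p+q,*}{\C C,\C C}\To E_2^{0,p+q}$ being induced by restriction to bar degree $0$.

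The real content — and the step where the field hypothesis is essential — is this last identification. It uses the exactness of $\hom_k(-,V)$ out of a graded vector space and of products in $\modules{k}$, in order to commute cohomology past both the product over the $M_i$ and the functor $\hom_k(\hom_{\C C}(M_1,M_0)\otimes\cdots\otimes\hom_{\C C}(M_p,M_{p-1}),-)$, together with a duality computation relating the bimodule-level group $\ext_{\C C^{\env}}^{q,*}(\,\cdot\,,\C C)$ of the decomposable bimodules occurring in the restricted bar complex to the module-level $\ext_{\C C}^{q,*}(M_p,M_0)$, all compatibly with composition and internal grading. Pinning down this duality — and, should the naive bicomplex above not yield it verbatim, finding the correct modification of the resolution in the $q$-direction — is where I expect the main difficulty to lie; everything else is routine relative homological algebra over a field.
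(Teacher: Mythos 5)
The gap is exactly the step you defer, and it is not a technicality that can be ``pinned down'': for the bicomplex you propose, the identification of the columns with $\hc{p,*}{\modulesfp{\C C},\ext_{\C C}^{q,*}}$ is false. Restricting $B_p(\modulesfp{\C C})$ along $y^{\env}$ puts, in the summand indexed by a chain $M_0\leftarrow\cdots\leftarrow M_p$ with tensor factor $W_{M_\bullet}=\hom^*_{\C C}(M_1,M_0)\otimes\cdots\otimes\hom^*_{\C C}(M_p,M_{p-1})$, the \emph{dual} left $\C C$-module $M_0^{\vee}=\hom_{\C C}^*(M_0,\C C(-,?))$ in the covariant slot and $M_p$ in the contravariant slot. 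After resolving the two one-sided factors by representables and applying $\hom^*_{\C C^{\env}}(-,\C C)$, that column computes $\hom_k^*(W_{M_\bullet},-)$ of a hyper-Ext of $M_0^{\vee}$ against the derived dual of $M_p$ (since $\hom^*_{\C C^{\env}}(L\otimes W\otimes P,\C C)\cong\hom_k^*(W,\hom^*(L,P^{\vee}))$ for $L$, $P$ projective one-sided modules), and \emph{not} $\hom_k^*(W_{M_\bullet},\ext_{\C C}^{q,*}(M_p,M_0))$. Finitely presented modules need not be reflexive, and $M_0^{\vee}$ can vanish outright: take $\C C$ with one object and endomorphisms $k[x]$ in degree $0$, and $M_0=M_p=k$; then $M_0^{\vee}=0$, so the corresponding factor of your $E_1$ is zero in every $q$, while $\hom_{\C C}(M_p,M_0)=k$ and $\ext^{1}_{k[x]}(k,k)=k$. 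So even granting the other point you leave open (choosing the $q$-direction resolutions compatibly with the bar differential, which mixes summands and changes $M_0$, hence requires a Cartan--Eilenberg-type construction), the filtration of $\operatorname{Tot}(K)$ by bar degree has the wrong second page; only the abutment would be correct. The defect is structural: Yoneda-restricting the bar resolution of $\modulesfp{\C C}$ dualizes $M_0$, and no duality computation can recover $M_0$ from $M_0^{\vee}$.

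The paper avoids dualizing altogether by working on the cochain side from the start, with the bicomplex $C^{\star,\bullet}=\hom^*_{\modulesfp{\C C}^{\env}}(B_{\star}(\modulesfp{\C C}),\hom^*_{\C C}(-\otimes_{\C C}B_{\bullet}(\C C),-))$, whose coefficient complex keeps both $M_p$ and $M_0$ in module position. Over a field, $M\otimes_{\C C}B_{\bullet}(\C C)$ is a projective resolution of $M$, which gives $E_2^{p,q}=\hh{p,*}{\modulesfp{\C C},\ext_{\C C}^{q,*}}$ immediately; the abutment is identified via the other filtration, using that the elementary coefficient bimodules $D_{X,Y}(M,N)=\hom^*_k(M(X),N(Y))$ have vanishing higher cohomology over $\modulesfp{\C C}$ (an explicit contraction, as in Jibladze--Pirashvili) together with a density argument: the coefficient complex is a left Kan extension from $\C C\subset\modulesfp{\C C}$, so its end is computed after restriction, yielding $\hom^*_{\C C^{\env}}(B_{\bullet}(\C C),\C C)$ and hence $\hh{p+q,*}{\C C,\C C}$. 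If you wish to rescue a resolution-of-the-diagonal formulation, the modification forced on you is precisely this one: the $p$-direction must carry $M_0$ covariantly (through $\hom_{\C C}$ \emph{into} it), not its dual $M_0^{\vee}$, which is exactly what the paper's coefficient bimodule achieves.
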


\begin{proof}
	The spectral sequence will be associated to the bicomplex
	\[C^{\star,\bullet}=\hom_{\modulesfp{\C C}^{\env}}^*(B_{\star}(\modulesfp{\C C}),\hom_{\C C}^*(-\otimes_{\C C}B_{\bullet}(\C C),-)).\]
	An element of $C^{p,q}$ is the same a family of graded $k$-module morphisms
	\[\bigotimes_{i=1}^p\hom^*_{\C C}(M_{i},M_{j-1})
	\otimes M_p(X_0)\otimes
	\bigotimes_{j=1}^q\C C(X_{j},X_{j-1})\longrightarrow M_0(X_q)\]
	indexed by all sequences of objects $M_0,\dots, M_p$ in $\modulesfp{\C C}$ and $X_0,\dots, X_q$ in $\C C$, and, with this description, the horizontal and vertical differentials are
	\begin{equation}\label{differentials_bicomplex}
	\begin{split}
		d_h(\varphi)(g_1,\dots, g_{p+1},x,f_1,\dots, f_q)={}&(-1)^{\abs{\varphi}\abs{g_1}}g_1(X_q)(\varphi(g_2,\dots, g_{p+1},x,f_1,\dots, f_q))\\
		&+\sum_{i=1}^p(-1)^i\varphi(\dots,g_ig_{i+1},\dots,x,f_1,\dots, f_q)\\
		&+(-1)^{p+1}\varphi(g_1,\dots, g_{p+1}(X_0)(x),f_1,\dots, f_q),\\
		d_v(\varphi)(g_1,\dots, g_{p},x,f_1,\dots, f_{q+1})={}&\varphi(g_1,\dots, g_{p},x\cdot f_1,\dots, f_{q+1})\\
		&+\sum_{i=1}^q(-1)^i\varphi(g_1,\dots, g_{p},x,\dots, f_if_{i+1},\dots)\\
		&+(-1)^{q+1}\varphi(g_1,\dots, g_{p},x,f_1,\dots, f_q)\cdot f_{q+1}.
	\end{split}
	\end{equation}
	 
	Since we are working over a field, all categories are \emph{locally free}, i.e.~morphism graded modules are free, and bar resolutions are honest projective bimodule resolutions. Moreover, for any right $\C C$-module $M$, the complex of right $\C C$-modules $M\otimes_{\C C}B_{\bullet}(\C C)$ is a projective resolution. Here we use that augmented bar resolutions, in general, admit a contraction as complexes of left or right modules (not as complexes of bimodules), so the homology of $M\otimes_{\C C}B_{\bullet}(\C C)$ is $M$ is concentrated in degree $0$. We also use that 
	\[M\otimes_{\C C}B_{n}(\C C)
	=
	\bigoplus_{X_0,\dots, X_n} M(X_0)\otimes\C C(X_1,X_{0})\otimes\cdots\otimes\C C(X_n,X_{n-1})\otimes\C C(-,X_n)\]
	and $M$ takes free values since the ground ring is a field, so this is a direct sum of representables, hence projective.
	
	Since $\modulesfp{\C C}$ is locally free $B_{\star}(\modulesfp{\C C})$ is a \emph{projective} resolution of $\modulesfp{\C C}$ as a bimodule over itself, so the $E_2$-term of the first-vertical-then-horizontal cohomology spectral sequence is
	\[E_2^{p,q}=\hh{p,*}{\modulesfp{\C C},\ext_{\C C}^{p,*}}.\]
	In order to compute the target of this spectral sequence, i.e.~the cohomology of the total complex of $C^{\star,\bullet}$, we now look at the $E_2$-term of the other spectral sequence associated to the bicomplex.
	
	The $\modulesfp{\C C}$-bimodule $\hom_{\C C}^*(-\otimes_{\C C}B_{n}(\C C),-)$ sends $M$ and $N$ to
	\[\prod_{X_1,\dots,X_n}\hom^*_k(M(X_0)\otimes\C C(X_1,X_{0})\otimes\cdots\otimes\C C(X_n,X_{n-1}),N(X_n)),\]
	hence it is a product of $\modulesfp{\C C}$-bimoules $D_{X,Y}$ of the form \[D_{X,Y}(M,N)=\hom_{k}^*(M(X),N(Y)),\] where $X$ and $Y$ are fixed objects in $\C C$. Here we use again that we are working over a field, so $k$-module morphism objects in $\C C$ are free. The cohomology of $\modulesfp{\C C}$ with coefficients in such a $D_{X,Y}$ vanishes in positive dimensions. For this, we use the chain homotopy $h$ defined on 
	$\hom_{\C C^{\env}}^*(B_\bullet(\modulesfp{\C C}),D_{X,Y})$ as follows, compare \cite[Lemma 3.10]{jibladze_cohomology_1991}. Given an $(n+1)$-cochain $\varphi$ and morphisms $f_i\colon M_i\rightarrow M_{i-1}$ in $\modulesfp{\C C}$, $n\geq 0$, $1\leq i\leq n$, $h(\varphi)(f_1\otimes \cdots\otimes f_n)\in D_{X,Y}(M_n,M_0)$ is defined by
	\begin{align*}
	h(\varphi)(f_1\otimes \cdots\otimes f_n)\colon \hom_{\C C}^*(\C C(-,X),M_n)&=M_n(X)\To M_0(Y),\\
	g&\longmapsto \varphi(f_1\otimes \cdots\otimes f_n\otimes g)(1_X).
	\end{align*}
	Hence, the inclusion of the $0$-dimensional horizontal cohomology in $C^{\star,\bullet}$ is a quasi-isomorphism (we mean with the total complex of $C^{\star,\bullet}$). This $0$-dimensional horizontal cohomology is the end of the cochain complex of $\modulesfp{\C C}$-bimodules $\hom_{\C C}^*(-\otimes_{\C C}B_{\bullet}(\C C),-)$. Such end is, dimension-wise, the graded module of graded natural transformations from the source to the target regarded as graded functors $\modulesfp{\C C}\rightarrow \modules{\C C}$. The source preserves colimits, and $\C C\subset\modulesfp{\C C}$ is the inclusion of a dense subcategory \cite[\S5.1]{kelly_basic_2005}, hence the source is the left Kan extension of its restriction along $\C C\subset\modulesfp{\C C}$ \cite[Theorem 5.29]{kelly_basic_2005}, so the end can be computed by restricting to $\C C$. The latter end is the complex $\hom_{\C C^{\env}}^*(B_\bullet(\C C),\C C)$, whose cohomology is the claimed target of the spectral sequence. 
	
	The explicit quasi-isomorphism $\xi\colon \hom_{\C C^{\env}}^*(B_\bullet(\C C),\C C)\hookrightarrow C^{\star,\bullet}$ is defined as follows. Given an $n$-cochain $\varphi$ in the source, a finitely presented $\C C$-bimodule $M$, morphisms $f_i\colon X_i\rightarrow X_{i-1}$ in $\C C$, $n\geq 0$, $1\leq i\leq n$, and $g\in M(X_0)$, 
	\begin{equation}\label{quasi_iso}
		\xi(\varphi)(M)(g, f_1,\cdots, f_n)=(-1)^{\abs{\varphi}\abs{g}}g\varphi(f_1,\cdots, f_n).
	\end{equation}
\end{proof}

\begin{remark}
	In the previous proposition we can replace $\modulesfp{\C C}$ with any small full subcategory $\C B\subset\modules{\C C}$ containing $\C C$ since then $\C C$ is dense in $\C B$. 
	
	The spectral sequence has an ungraded version, where $\C C$ is an ungraded category and $\modulesfp{\C C}$ is replaced with the category of finitely presented ungraded right $\C C$-modules, or any full subcategory $\C B$ of the category of ungraded $\C C$-modules containing $\C C$. The proof is exactly the same. 
	
	The ungraded spectral sequence has been considered in \cite[Theorem 5.4.1]{lowen_hochschild_2005}. 	There, $\C C$ is replaced with the category of injective objects in an Grothendieck abelian category, but this is essentially equivalent to our framework because the Hochschild cohomology of a category coincides with that of its opposite.
\end{remark}

We now prove that, for a particular instance of the previous spectral sequence, the edge morphism takes the universal Massey product of an $A$-infinity model of a triangulated category to the Toda bracket which classifies the triangulated structure.

\begin{theorem}\label{theorem_edge_morphism}
	Let $\C T$ be an idempotent complete triangulated category over a field $k$ with suspension $\Sigma$. Assume we have a minimal $A$-infinity enhancement with universal Massey product $\{m_3\}\in \hh{3,-1}{\C T_{\Sigma}, \C T_{\Sigma}}$. Then the edge morphism 
	\[\hh{3,-1}{\C T_{\Sigma}, \C T_{\Sigma}}\To \hh{0,-1}{\modulesfp{\C T_{\Sigma}},\ext_{\C T_{\Sigma}}^{3,\ast}}
	\]
	of the spectral sequence in Proposition \ref{spectral_sequence} for $\C C=\C T_{\Sigma}$ takes the universal Massey product to the Toda bracket of the triangulated structure.
\end{theorem}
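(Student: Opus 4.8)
The plan is to make the identification through the explicit description of the edge morphism of the spectral sequence and the explicit description of the Toda bracket isomorphism from Theorem \ref{Toda_brackets_and_cohomology}. Recall that the edge morphism $\hh{p,*}{\C C,\C C}\to E_\infty^{p,0}\subset E_2^{p,0}=\hh{p,*}{\modulesfp{\C C},\ext^{0,*}_{\C C}}$ is induced at the cochain level by the quasi-isomorphism $\xi$ of \eqref{quasi_iso}, which embeds $\hom^*_{\C C^{\env}}(B_\bullet(\C C),\C C)$ into the total complex; dually, the edge morphism to $E_2^{0,q}$ is the restriction of a total cocycle to the column $p=0$, followed by the identification of $\hh{0,*}{\modulesfp{\C C},\ext^{q,*}_{\C C}}$ as the end computed on $\C C$. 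For $\C C=\C T_\Sigma$ and $q=3$ we must therefore trace a Hochschild $3$-cocycle $m_3$ representing $\{m_3\}$ through: (i) its image in the total complex of $C^{\star,\bullet}$, (ii) restriction to the $0$-th horizontal column, which is $\hom^*_{\modulesfp{\C T_\Sigma}}(B_\bullet(\modulesfp{\C T_\Sigma}),\hom^*_{\C T_\Sigma}(-\otimes_{\C T_\Sigma} B_\bullet(\C T_\Sigma),-))$ restricted to $\star=0$, and (iii) further restriction to $\C T_\Sigma\subset\modulesfp{\C T_\Sigma}$, which produces precisely an element of $\hh{0,*}{\modulesfp{\C T_\Sigma},\hh{3,*}{\C T_\Sigma}}=\hh{0,*}{\modulesfp{\C T_\Sigma},\ext^{3,*}_{\C T_\Sigma}}$ since bar resolutions are honest projective resolutions over a field.

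First I would compute step (i)--(iii) concretely. A representative $m_3\in\hc{3,-1}{\C T_\Sigma,\C T_\Sigma}$ assigns to composable $X\xrightarrow{f}Y\xrightarrow{g}Z\xrightarrow{h}T$ an element $m_3(h,g,f)\in\C T(X,\Sigma^{-1}T)$. Under $\xi$ the corresponding total cocycle in bidegree $(0,3)$, evaluated at a representable bimodule $\C T_\Sigma(-,W)$ and a generator, is (up to sign) the map sending $\alpha\otimes f\otimes g\otimes h\mapsto \alpha\cdot m_3(h,g,f)$. Restricting to $\star=0$ and then to $\C T_\Sigma$, and passing to cohomology in the bar degree $\bullet$, this is exactly the assignment $P_*\mapsto [m_3(h,g,f)]\in H^3(P_*,\Sigma^{-1}H_0(P_*))$ for $P_*$ the length-three complex of representables associated to $(f,g,h)$—provided $gf=0$ and $hg=0$, which is where the cocycle condition on $m_3$ forces vanishing of the two boundary terms in \eqref{hochschild_differential}, so that $m_3(h,g,f)$ descends to the stated cohomology group. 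This is exactly the cocycle $\varphi\in\hh{0}{\modulesfp{\C T},\ext^3_{\C T}(-,\Sigma^{-1})}$ appearing at the end of the proof of Theorem \ref{Toda_brackets_and_cohomology}, hence by that theorem its associated Toda bracket is computed by lifting $(f,g,h)$ to a projective resolution of $M=\coker\C T(-,h)$ and pushing $\varphi(M)$ forward; the upshot is $\langle h,g,f\rangle\ni m_3(h,g,f)$, which is precisely the defining property of the Toda bracket classifying the triangulated structure (the exact triangles are the ones whose Toda bracket contains the identity, and $m_3$ of an enhancement computes the standard Massey product, which for an enhancement of a triangulated category realizes the exact triangles).

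The main obstacle will be step (i)--(ii): verifying that the cochain-level map $\xi$ followed by column-restriction really sends the bar cocycle $m_3$ to the cochain $P_*\mapsto[m_3(\cdots)]$ with the correct signs, and in particular checking that the two boundary terms $f_1\cdot\varphi(f_2,\dots)$ and $\varphi(f_1,\dots)\cdot f_{n+1}$ of the Hochschild differential \eqref{hochschild_differential} are exactly what makes $[m_3(h,g,f)]$ well-defined modulo $(\Sigma^{-1}h)\cdot\C T(X,\Sigma^{-1}Z)+\C T(Y,\Sigma^{-1}T)\cdot f$ when $gf=hg=0$. I would organize this by writing the length-three complex $P_*=(\C T(-,X)\to\C T(-,Y)\to\C T(-,Z)\to\C T(-,T))$, identifying $H_0(P_*)=M$, $H^3(w_{\le 3}P_*,\Sigma^{-1}M)=\C T(X,\Sigma^{-1}T)/(\mathrm{im}\,\C T(-,f)^*)$, and matching this with the target of the Toda bracket; the naturality of $m_3(h,g,f)$ under chain maps $Q_*\to P_*$ (which is again a consequence of the cocycle condition on $m_3$ restricted to shorter sequences) is what places $\varphi$ in $\hh{0}{\chain_{\ge 0}^{\le 3}(\C T),H^3(-,\Sigma^{-1}H_0(-))}$, after which the chain of isomorphisms in the proof of Theorem \ref{Toda_brackets_and_cohomology} does the rest. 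A secondary point to be careful about is the degree bookkeeping: $\{m_3\}$ lives in internal degree $-1$, and one must check the edge morphism lands in $\hh{0,-1}{\modulesfp{\C T_\Sigma},\ext^{3,*}_{\C T_\Sigma}}$ with the internal grading matching the $\Sigma^{-1}$-twist on the coefficients, which follows from the definition of the graded extension $\Sigma$ and the shift conventions in Section \ref{Hochschild_cohomology_of_categories}.
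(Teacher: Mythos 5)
Your proposal follows the paper's own route: you trace a representing cocycle $m_3$ through the explicit quasi-isomorphism $\xi$ of Proposition \ref{spectral_sequence} and the chain of isomorphisms in the proof of Theorem \ref{Toda_brackets_and_cohomology} to conclude $m_3(h,g,f)\in\langle h,g,f\rangle$, and then identify the resulting bracket with that of the triangulated structure via the Bondal--Kapranov enhancement property, which is exactly the paper's argument (the paper merely packages the same unwinding via the $3$-periodic projective resolutions arising from exact triangles). So the proposal is correct and takes essentially the same approach as the paper.
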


\begin{proof}
	Given an exact triangle
	\[X\stackrel{f}{\To}Y\stackrel{i}{\To}C_f\stackrel{q}{\To}\Sigma X\]
	in $\C T$, 
	\begin{center}
	\begin{tikzcd}[column sep=5mm]
		\cdots\arrow[r]&
		\C T(-,\Sigma^{-1}C_f)\arrow[r, "{\C T(-,\Sigma^{-1}q)}" yshift=4pt]\arrow[r] &
		\C T(-,X)\arrow[r,"{\C T(-,f)}" yshift=4pt] &
		\C T(-,Y)\arrow[r, "{\C T(-,i)}" yshift=4pt] &
		\C T(-,C_f)\arrow[r, "{\C T(-,q)}" yshift=4pt] &
		\C T(-,\Sigma X) 
	\end{tikzcd}
\end{center}
	is a projective resolution of $\coker\C T(-,q)$ in $\modulesfp{\C T}$. 
	Combining this fact with the explicit formulas in the proofs of Theorem \ref{Toda_brackets_and_cohomology} and Proposition \ref{spectral_sequence}, we see that the Toda bracket defined by the image of $\{m_3\}$ along the edge morphism in the statement and the isomorphism in Theorem \ref{Toda_brackets_and_cohomology} satisfies
	\[m_3(h,g,f)\in\langle h,g,f\rangle.\]
	By \cite{bondal_enhanced_1991}, this is the Toda bracket associated to the triangulated structure of $\C T$ with suspension $\Sigma$, since it is enhanced by the minimal $A$-infinity category structure with universal Massey product $\{m_3\}$.
\end{proof}

Next, we define certain morphisms within the $E_2$-term of the previous spectral sequence. These morphisms will be used afterwards to give a new cohomological interpretation of Heller's anticommutativity condition.

\begin{definition}\label{kappa}
	Let $\C C$ be a graded category such that $\modulesfp{\C C}$ is graded Frobenius abelian. We define the graded $k$-module morphisms, $p\geq 1$, $q\in\mathbb Z$,
	\[\kappa \colon \hh{p+1,*}{\modulesfp{\C C },\widehat{\ext}_{\C C}^{q,*}}
	\To
	\hh{0,*}{\modulesfp{\C C },\widehat{\ext}_{\C C}^{p+q,*}}\]
	as follows. Given a cohomology class $\{\psi\}$ in the source represented by the cocycle $\psi$ and a finitely presented right $\C C$-module $M$, we consider the $p$-extension in $\modulesfp{\C C}$
	\[M\stackrel{f_0}\hookrightarrow P_0\stackrel{f_1}\longrightarrow \cdots\stackrel{f_{p-1}}\longrightarrow P_{p-1}\stackrel{f_p}\twoheadrightarrow S^{p}M\]
	with middle projective-injective terms obtained as the Yoneda product of the extensions defining $S^iM$, $1\leq i\leq p$, we evaluate the cocycle $\psi$ on it
	\[\psi(f_p,\cdots, f_0)\in \widehat{\ext}_{\C C}^{q,*}(M,S^pM)\cong 
	\widehat{\ext}_{\C C}^{p+q,*}(M,M)\]
	and define $\kappa(\{\psi\})(M)$ as the corresponding element on the right of the isomorphism. This isomorphism is also induced by the short exact sequences with projective-middle terms defining $S^iM$, $1\leq i\leq p$. It is easy to check that $\kappa(\{\psi\})$ does not depend on the choice of representative $\psi$, since $f_{i+1}f_i=0$ and all the $P_i$ are projective-injective. This must also be used to check that $\kappa(\{\psi\})$ is indeed a $0$-cocycle.
\end{definition}

\begin{proposition}\label{Heller_criterion_by_differential}
	If $\C T$ is a small ungraded additive category over a field $k$ such that $\modulesfp{\C T}$ is Frobenius abelian and $\Sigma\colon \C T\rightarrow\C T$ is an automorphism, then the	 set of Puppe triangulated structures on $\C T$ with suspension functor $\Sigma$ is in bijection with the units of the bigraded ring $\hh{0,*}{\modulesfp{\C T_\Sigma},\widehat{\ext}_{\C T_{\Sigma}}^{\bullet,*}}$ lying in the kernel of the composite
	\begin{center}
		\begin{tikzcd}
			\hh{0,-1}{\modulesfp{\C T_\Sigma},\ext_{\C T_{\Sigma}}^{3,*}}
			\arrow[d,"d_2"]\\
			\hh{2,-1}{\modulesfp{\C T_\Sigma},\ext_{\C T_{\Sigma}}^{2,*}}
			\arrow[d,"\kappa"]\\
			\hh{0,-1}{\modulesfp{\C T_\Sigma},\ext_{\C T_{\Sigma}}^{3,*}}
		\end{tikzcd}
	\end{center}
	where $d_2$ is a second differential in the spectral sequence of Proposition \ref{spectral_sequence}.
\end{proposition}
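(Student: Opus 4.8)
The plan is to reduce, via Corollary~\ref{Heller_graded}, to a statement purely about Heller's anticommutativity condition, and then to match that condition with the vanishing of $\kappa\circ d_2$ by an explicit cochain computation in the bicomplex of Proposition~\ref{spectral_sequence}. By Corollary~\ref{Heller_graded}, the set of Puppe triangulated structures on $(\C T,\Sigma)$ is already in bijection with the units $\delta$ of $\hh{0,*}{\modulesfp{\C T_\Sigma},\widehat{\ext}_{\C T_{\Sigma}}^{\bullet,*}}$ lying in $\hh{0,-1}{\modulesfp{\C T_\Sigma},\widehat{\ext}_{\C T_{\Sigma}}^{3,*}}=\hh{0,-1}{\modulesfp{\C T_\Sigma},\ext_{\C T_{\Sigma}}^{3,*}}$ (the two groups agree since $3>0$) whose image $i^*\delta$ along the first map of~\eqref{exact_cohomology_sequence} satisfies Heller's anticommutativity condition, equivalently lies in the kernel of $\operatorname{id}-\tau_*^{-1}S^*$ from~\eqref{anticommutativity_cohomological} under the identification $\hh{0,-1}{\modulesfp{\C T},\widehat{\ext}_{\C T_\Sigma}^{3,*}}\cong\hh{0}{\modulesst{\C T},\homst_{\C T}(\Sigma,S^3)}$. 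Since~\eqref{exact_cohomology_sequence} also says that $i^*$ is injective, it is enough to prove that, for every such unit $\delta$, the class $i^*(\kappa(d_2(\delta)))$ equals $\pm(\operatorname{id}-\tau_*^{-1}S^*)(i^*\delta)$; vanishing of $\kappa(d_2(\delta))$ is then equivalent to Heller's condition on $i^*\delta$, and the stated bijection follows.

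To compute $d_2(\delta)$ I would first use that $\delta$ is a unit: by Proposition~\ref{Heller_ungraded} this lets us represent, for each finitely presented right $\C T$-module $M$, the class $\delta_M\in\widehat{\ext}_{\C T}^3(M,\Sigma^{-1}M)$ by a $3$-extension whose four middle terms are \emph{all} representable,
\[\Sigma^{-1}M\hookrightarrow\C T(-,X_M)\To\C T(-,Y_M)\To\C T(-,Z_M)\twoheadrightarrow M,\]
and at the same time to fix the cosyzygy sequences $M\hookrightarrow\C T(-,U_M)\twoheadrightarrow SM$ defining $S$. These data package into a vertical cocycle in $C^{0,3}=\hom_{\modulesfp{\C T_\Sigma}^{\env}}^*(B_0(\modulesfp{\C T_\Sigma}),\hom_{\C T_\Sigma}^*(-\otimes_{\C T_\Sigma}B_3(\C T_\Sigma),-))$ representing $\delta$ in $E_2^{0,3}$; one then runs the usual zig-zag for $d_2$, lifting along the vertical differential (possible precisely because the chosen $3$-extension is an honest complex of projectives), applying the horizontal differential, and reading off the resulting class in $E_2^{2,2}=\hh{2,*}{\modulesfp{\C T_\Sigma},\ext_{\C T_\Sigma}^{2,*}}$ in internal degree $-1$, which is $d_2(\delta)$, represented by a natural $2$-cocycle $\psi$. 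The crucial feature is that comparing the part of the $3$-extension attached to $\Sigma M$ with $\Sigma$ applied to the $3$-extension attached to $M$ — which naturality over the \emph{graded} category $\C T_\Sigma$ forces upon us — is exactly what obliges one to insert Heller's comparison isomorphisms $\sigma\colon\Sigma S\cong S\Sigma$ (the commutative squares of Section~\ref{heller_section}), so that $\sigma$ appears visibly in $\psi=d_2(\delta)$.

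Next I would apply $\kappa$. By Definition~\ref{kappa} with $p=1$, $\kappa(d_2(\delta))(M)$ is obtained by evaluating $\psi$ on the cosyzygy pair $(\C T(-,U_M)\twoheadrightarrow SM,\ M\hookrightarrow\C T(-,U_M))$ and transporting along the canonical isomorphism $\widehat{\ext}_{\C T_\Sigma}^{2,-1}(M,SM)\cong\widehat{\ext}_{\C T_\Sigma}^{3,-1}(M,M)=\widehat{\ext}_{\C T}^3(M,\Sigma^{-1}M)$ induced by that same sequence. Unwinding the zig-zag on precisely this pair — a monomorphism followed by an epimorphism through a representable, so that nearly all terms die — the two surviving contributions are, up to sign, the terms $\delta S$ and $(S\delta)\sigma$ of Heller's anticommutator, where $\delta\colon\Sigma\To S^3$ is now the natural transformation corresponding to $i^*\delta$. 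Keeping track of the Koszul signs coming from the degree $-1$ of $\imath$, from the sign $(-1)^n$ in the graded extension of $\Sigma$, and from the Hochschild differential~\eqref{hochschild_differential} (compare Proposition~\ref{graded_ungraded_long_exact_sequence} and Remark~\ref{multiplicative_properties}), this yields $i^*(\kappa(d_2(\delta)))=\pm\bigl((S\delta)\sigma+\delta S\bigr)$, i.e.\ $\pm(\operatorname{id}-\tau_*^{-1}S^*)(i^*\delta)$ under~\eqref{anticommutativity_cohomological}. Together with the reduction of the first paragraph, this proves the proposition.

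The hard part is the middle step: the honest cochain-level computation of $d_2$ in the bicomplex and the recognition, after applying $\kappa$, of Heller's anticommutator. Two things make it delicate. First, one must choose the $3$-extensions and the cosyzygy sequences compatibly enough that the zig-zag is actually computable and $\sigma$ emerges in a controlled way rather than as an uncontrolled choice; here the unit hypothesis on $\delta$ is genuinely used, since it is what lets the $3$-extension have all middle terms representable. Second, and more error-prone, is the sign bookkeeping: with the sign $(-1)^n$ built into the graded $\Sigma$ on $\C T_\Sigma$, with $\imath$ of degree $-1$, and with the signs of~\eqref{hochschild_differential}, it takes care to confirm that the answer is the \emph{anti}commutator $(S\delta)\sigma+\delta S$ (matching $\operatorname{id}-\tau_*^{-1}S^*$) rather than a commutator or an overall wrong sign.
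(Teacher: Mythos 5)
Your plan is correct and follows essentially the same route as the paper: reduce via Corollary \ref{Heller_graded} and the injectivity of $i^*$ in \eqref{exact_cohomology_sequence} to the identity $i^*\kappa d_2(\delta)=\pm(\operatorname{id}-\tau_*^{-1}S^*)(i^*\delta)$, and prove that identity by a cochain-level zig-zag computation of $d_2$ in the bicomplex of Proposition \ref{spectral_sequence} followed by evaluating $\kappa$ on the cosyzygy extension, where exactly two terms survive and give Heller's anticommutator. Two small corrections to your middle step: the vertical lift $\psi(g)$ in the zig-zag exists simply because $\delta$ is an end element (naturality of $\delta(M)$ in $M$), not because the unit hypothesis yields $3$-extensions with all middle terms representable --- indeed the paper proves $\kappa d_2(\varphi)=-\varphi+\tau_*^{-1}S^*(\varphi)$ for an arbitrary class $\varphi$, no unit needed --- and the $S$-twist (that is, $\tau_*^{-1}S^*$, encoding $\sigma$) does not emerge inside $d_2(\delta)$ from a comparison at $\Sigma M$, but only at the $\kappa$ stage, through the surviving term involving $\varphi(SM)$ and the splicing isomorphism $\widehat{\ext}^{2,-1}_{\C T_\Sigma}(M,SM)\cong\widehat{\ext}^{3,-1}_{\C T_\Sigma}(M,M)$, which is $S^{-1}$ under the evident identification.
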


\begin{proof}
	Using \eqref{anticommutativity_cohomological}, Proposition \ref{Heller_ungraded}, \eqref{exact_cohomology_sequence}, and Corollary \ref{Heller_graded} and its proof, we see that it suffices to prove that the following diagram commutes up to sign,
	\begin{center}
		\begin{tikzcd}[column sep=50]
			\hh{0,-1}{\modulesfp{\C T_\Sigma},\ext_{\C T_{\Sigma}}^{3,*}}
			\arrow[r,"\kappa d_2"] \arrow[d, "i^*", hook]
			&\hh{0,-1}{\modulesfp{\C T_\Sigma},\ext_{\C T_{\Sigma}}^{3,*}}\arrow[d, "i^*", hook]
			\\
			\hh{0,-1}{\modulesfp{\C T},\ext_{\C T_{\Sigma}}^{3,*}}\arrow[d, "\cong"]
									&\hh{0,-1}{\modulesfp{\C T},\ext_{\C T_{\Sigma}}^{3,*}}\arrow[d, "\cong"]\\
			\hh{0}{\modulesst{\C T},\homst_{\C T}(\Sigma,S^{3})}\arrow[r, "\operatorname{id}-\tau_*^{-1}S^*"]&\hh{0}{\modulesst{\C T},\homst_{\C T}(\Sigma,S^{3})}
		\end{tikzcd}
	\end{center}
	
	Let $\varphi\in \hh{0,-1}{\modulesfp{\C T_\Sigma},\widehat{\ext}_{\C T_{\Sigma}}^{3,*}}$. Our spectral sequence is the spectral sequence of a bicomplex, hence it is straightforward, although a little bit tedious, to compute $d_2(\varphi)$. First of all, given a finitely presented right $\C T_\Sigma$-module $M$ we must represent $\varphi(M)\in \ext_{\C T_{\Sigma}}^{3,-1}(M,M)$ by a right $\C T_\Sigma$-module morphism of degree $-1$ 
	\[\tilde\varphi(M)\colon M\otimes_{\C T_{\Sigma}}B_3(\C T_{\Sigma})\To M,\]
	which, by the Yoneda lemma, is the same as a collection of degree $-1$ morphisms of graded $k$-modules
	\[\tilde\varphi(M)\colon M(X_0)\otimes\C T_{\Sigma}(X_1,X_0)
	\otimes\C T_{\Sigma}(X_2,X_1)
	\otimes\C T_{\Sigma}(X_3,X_2)
	\To
	M(X_3)
	\]
	indexed by objects $X_0,\dots, X_3$ in $\C T$. We can suppose without loss of generality that $\tilde\varphi(M)=0$ for $M$ projective-injective. 
	
	If we have a projective-injective resolution of $M$
	\begin{center}
	\begin{tikzcd}[column sep=5mm]
		\cdots\arrow[r]&
		\C T_{\Sigma}(-,X_3)\arrow[r,"d_3"]&
		\C T_{\Sigma}(-,X_2)\arrow[r,"d_2"]&
		\C T_{\Sigma}(-,X_1)\arrow[r,"d_1"]&
		\C T_{\Sigma}(-,X_0)\ar[r, two heads,"d_0"]&
		M,
	\end{tikzcd}
	\end{center}
    there is a morphism of resolutions $\C T_{\Sigma}(-,X_*)\rightarrow M\otimes_{\C T_{\Sigma}}B_*(\C T_\Sigma)$ defined degreewise by the elements 
	\[
    (-1)^{\frac{n(n+1)}{2}}d_0\otimes\cdots \otimes d_n\otimes\operatorname{id}_{X_n}
    \]
    in
    \[
    M(X_0)\otimes\C T_{\Sigma}(X_1,X_0)\otimes\cdots\otimes\C T_{\Sigma}(X_n,X_{n-1})\otimes\C T_{\Sigma}(X_n,X_n).
    \]
    Hence $\varphi(M)$ is also represented by the 
    morphism $\C T_{\Sigma}(-,X_3)\rightarrow M$ defined by 
    $\tilde{\varphi}(M)(d_0\otimes d_1\otimes d_2\otimes d_3)$.
	
	Since $\varphi$ is a Hochschild cocycle, for each $g\colon M\rightarrow N$ in $\modulesfp{\C T_{\Sigma}}$ we must have
	\[\psi(g)\colon M\otimes_{\C T_{\Sigma}}B_2(\C T_{\Sigma})\To N,\]
	of degree $-1$, i.e.
	\[\psi(g)\colon M(X_0)\otimes\C T_{\Sigma}(X_1,X_0)
	\otimes\C T_{\Sigma}(X_2,X_1)
	\To
	N(X_2)
	\]
	as above such that
	\begin{multline}\label{equation_phi_psi}
	(-1)^{\abs{g}}g\cdot \tilde{\varphi}(M)(x\otimes f_1\otimes f_2\otimes f_3)
	-\tilde{\varphi}(N)(g\cdot x\otimes f_1\otimes f_2\otimes f_3)={}
	\\
	\psi(g)(x\cdot f_1\otimes f_2\otimes f_3)
	-\psi(g)(x\otimes f_1\cdot f_2\otimes f_3)\\
	+\psi(g)(x\otimes f_1\otimes f_2\cdot f_3)
	-\psi(g)(x\otimes f_1\otimes f_2)\cdot f_3.
	\end{multline}
	Moreover, $\psi(g)$ must be $k$-linear in $g$.
	
	The cohomology class $d_2(\varphi)$ is represented by the $2$-cocycle $\xi$ such that,	
	given two composable morphisms in $\modulesfp{\C T}$
	\[M_2\stackrel{g_2}{\To}M_1\stackrel{g_1}{\To}M_0,\]
	the element $\xi(g_1\otimes g_2)\in \ext_{\C T_{\Sigma}}^{2,\abs{g_1}+\abs{g_2}-1}(M_2,M_0)$ is represented by the map
	\[(-1)^{\abs{g_1}}g_1\psi(g_2)-\psi(g_1\cdot g_2)+\psi(g_1)(g_2\otimes\operatorname{id})\colon  M_2\otimes_{\C T_{\Sigma}}B_2(\C T_{\Sigma})\To M_0.\]
	
	According to the definition of $\kappa$, we can decompose it in two steps, $\kappa=\kappa_2\kappa_1$, 
	\begin{align*}
	\hh{2,-1}{\modulesfp{\C T_{\Sigma} },\widehat{\ext}_{\C T_{\Sigma}}^{2,*}}
		&\stackrel{\kappa_1}\To
	\hh{0,-1}{\modulesfp{\C T_{\Sigma} },\widehat{\ext}_{\C T_{\Sigma}}^{2,*}(-,S)}\\
		&\mathop{\To}\limits_{\kappa_2}^{\cong}
	\hh{0,-1}{\modulesfp{\C T_{\Sigma} },\widehat{\ext}_{\C T_{\Sigma}}^{3,*}}.
	\end{align*}
	The second one, $\kappa_2$, is an isomorphism. 
	If we consider the short exact sequences defining $S$, 
	\[M\stackrel{j_M}\hookrightarrow\C T_{\Sigma}(-,X_M)\stackrel{p_M}\twoheadrightarrow S M,\]
	consisting of degree $0$ morphisms, 
	then $\kappa_1d_2(\varphi)(M)\in \widehat{\ext}_{\C T_{\Sigma}}^{2,-1}(M,SM)$ is represented by
	\[p_M\psi(j_M)+\psi(p_M)(j_M\otimes\operatorname{id})\colon  M\otimes_{\C T_{\Sigma}}B_2(\C T_{\Sigma})\To SM.\]
	With the small resolution $\C T_{\Sigma}(-,X_*)$ of $M$, $\kappa_1d_2(\varphi)(M)$ is represented by
	\begin{equation}\label{k_d2_phi}
	-p_M \psi(j_M)(d_0\otimes d_1\otimes d_2)-\psi(p_M) (j_M\cdot d_0\otimes d_1\otimes d_2)\in SM(X_2).
	\end{equation}
	Using \eqref{equation_phi_psi}, it is straightfoward to check that both summands represent elements in $\widehat{\ext}_{\C T_{\Sigma}}^{2,-1}(M,SM)$, i.e.~each of them vanishes when multiplying by $d_3$ on the right.
	
	The natural isomorphism 
	\begin{equation}\label{iso_S}
		\widehat{\ext}_{\C T_{\Sigma}}^{2,-1}(M,SN)\cong 
		\widehat{\ext}_{\C T_{\Sigma}}^{3,-1}(M,N)
	\end{equation}
	defining $\kappa_2$ is given as follows. We pick a representative $\zeta\colon\C T_\Sigma(-,X_2)\rightarrow SN$ of an element in the source, we lift $\zeta$ along $p_N$, $\zeta=p_N\zeta'$, $\zeta'd_3$ factors (uniquely) through the inclusion $j_N$, $\zeta'd_3=j_N\zeta''$, and $\zeta''\colon\C T_{\Sigma}(-,X_3)\rightarrow N$ represents the image of the isomorphism \eqref{iso_S}. Note that, if we plug the extension defining $SM$ to the previous resolution of $M$, we obtain a projective-injective resolution of $SM$,
	\begin{center}
	\begin{tikzcd}[column sep=5mm]
		\cdots\arrow[r]&
		\C T_{\Sigma}(-,X_2)\arrow[r,"d_2"]&
		\C T_{\Sigma}(-,X_1)\arrow[r,"d_1"]&
		\C T_{\Sigma}(-,X_0)\arrow[r,"j_Md_0"]&
		\C T_{\Sigma}(-,X_M)
		\ar[r, two heads,"p_M"]&
		SM,
	\end{tikzcd}
	\end{center}
	yielding an immediate identification $\widehat{\ext}_{\C T_{\Sigma}}^{2,-1}(M,SN)=\widehat{\ext}_{\C T_{\Sigma}}^{3,-1}(SM,SN)$. Moreover, using this identification, the isomorphism \eqref{iso_S} is $S^{-1}$. Hence, if we apply it to $\varphi(SM)$ we obtain $S^{-1}\varphi(SM)=-\tau^{-1}_*S^*(\varphi)(M)$.
	
	The first summand in \eqref{k_d2_phi} is already lifted along $p_M$. Moreover, by \eqref{equation_phi_psi},
	\begin{equation*}
		\psi(j_M)(d_0\otimes d_1\otimes d_2)\cdot d_3=j_M\cdot\tilde{\varphi}(M)(d_0\otimes d_1\otimes d_2\otimes d_3),
	\end{equation*}	
	and $\tilde{\varphi}(M)(d_0\otimes d_1\otimes d_2\otimes d_3)$ represents $\varphi(M)$. 
	Furthermore, again by \eqref{equation_phi_psi},
	\begin{equation*}
	\begin{split}
		\psi(p_M) (j_M\cdot d_0\otimes d_1\otimes d_2)=&-\tilde{\varphi}(SM)(p_M\otimes j_M\cdot d_0\otimes d_1\otimes d_2)\\&
		+\psi(p_M) (\operatorname{id}_{X_M}\otimes j_M\cdot d_0\otimes d_1)\cdot d_2,
	\end{split}
	\end{equation*}
	and $\tilde{\varphi}(SM)(p_M\otimes j_M\cdot d_0\otimes d_1\otimes d_2)$ represents $\varphi(SM)$. 
	Hence, by the previous paragraph, applying $\kappa_2$ to the element represented by the second summand in \eqref{k_d2_phi} we obtain $\tau^{-1}_*S^*(\varphi)(M)$. 
	Summing up, we have checked that $\kappa d_2(\varphi)(M)=-\varphi(M)+\tau^{-1}_*S^*(\varphi)(M)$. 
	This finishes the proof.
\end{proof}

\section{The octahedral axiom}\label{octahedral_axiom_section}

Verdier's octahedral axiom, unlike the rest of axioms for a triangulated category, does not seem to have an algebraic characterization. Nevertheless we have the following algebraic sufficient condition. After Proposition \ref{Heller_criterion_by_differential}, this sufficient condition can be regarded as a strengthening of Heller's anticommutativity condition. A rather close strengthening indeed. It actually reflects in a very precise way the known fact that the octahedral axiom is at the bottom of the coherence hierarchy of enhancements. The proof is rather lengthy because we must prove the octahedral axiom in a non-standard way, but we obtain as a corollary an interesting characterization of pre-triangulated DG- or $A$-infinity categories over a field, in the sense of Bondal and Kapranov.

\begin{theorem}\label{octahedral_theorem}
	If $\C T$ is a small ungraded idempotent complete additive category over a field $k$ such that $\modulesfp{\C T}$ is Frobenius abelian, $\Sigma\colon \C T\rightarrow\C T$ is an automorphism, and $\varphi\in \hh{0,-1}{\modulesfp{\C T_\Sigma},\ext_{\C T_{\Sigma}}^{3,*}}$ corresponds to a Puppe triangulated structure on $\C T$ with suspension $\Sigma$ such that $d_2(\varphi)=0$, then this Puppe triangulated structure satisfies the octahedral axiom.
\end{theorem}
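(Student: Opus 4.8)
The plan is to reduce the octahedral axiom to the $\varphi$-exactness of a single candidate triangle, to rephrase that as an identity of $3$-extension classes inside $\modulesfp{\C T}$, and to establish this identity using the coherence encoded in $d_2(\varphi)=0$. Since $\varphi$ classifies a Puppe triangulated structure, all of its axioms except the octahedral one hold. I would fix two composable maps $X\st{f}\To Y\st{g}\To Z$ and exact triangles $(f,u,u')$, $(gf,w,w')$, $(g,v,v')$; the fill-in axiom then produces $a\colon C_f\to C_{gf}$ and $b\colon C_{gf}\to C_g$ with $au=wg$, $w'a=u'$, $bw=v$, $v'b=(\Sigma f)w'$, completing the commuting octahedron with candidate fourth triangle $C_f\st{a}\To C_{gf}\st{b}\To C_g\st{(\Sigma u)v'}\To\Sigma C_f$, and the octahedral axiom asserts that this candidate is exact. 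By the description of exact triangles recalled in Section~\ref{heller_section}, this amounts to showing that the associated $4$-term sequence of representable $\C T$-modules is an extension representing $\varphi(M_4)$, where $M_4=\coker\C T(-,b)$.

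Applying the Yoneda embedding $\C T\hookrightarrow\modulesfp{\C T}$, the three chosen triangles yield $3$-extensions $E_1$, $E_2$, $E_3$ with projective--injective middle terms representing $\varphi(M_1)$, $\varphi(M_2)$, $\varphi(M_3)$. The octahedral identities arrange their representable parts into a commutative $3\times 3$ array of representables whose first two rows are $E_1$ and $E_3$ truncated, whose first column is $\C T(-,X)\st{\operatorname{id}}\To\C T(-,X)\to 0$, whose second column is $E_2$ truncated, and whose third column is exactly the candidate fourth triangle. A long diagram chase with the snake lemma then identifies $\Sigma^{-1}M_4=\ker\C T(-,a)$, shows that the third column, suitably completed, is a $3$-extension $E_4$ of $M_4$ by $\Sigma^{-1}M_4$, and exhibits $E_4$ as assembled from $E_1$, $E_2$, $E_3$ by pullbacks, pushouts and splicing. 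This reduces the octahedral axiom to the single assertion that the class of $E_4$ in $\widehat{\ext}^{3}_{\C T}(M_4,\Sigma^{-1}M_4)$ equals $\varphi(M_4)$.

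To evaluate that class I would work in the bicomplex $C^{\star,\bullet}$ of Proposition~\ref{spectral_sequence} for $\C C=\C T_{\Sigma}$. As in the proof of Proposition~\ref{Heller_criterion_by_differential}, the hypothesis $d_2(\varphi)=0$ supplies, after absorbing the trivialising $1$-cochain, cochains $\tilde\varphi\in C^{0,3}$, $\psi\in C^{1,2}$ and $\eta\in C^{2,1}$ with $d_v\tilde\varphi=0$, $d_h\tilde\varphi=d_v\psi$ and $d_h\psi=d_v\eta$. Following the explicit recipe for evaluating $\widehat{\ext}^3$-classes at the end of the proof of Theorem~\ref{Toda_brackets_and_cohomology}, I would choose a projective resolution of $M_4$ prolonging the fourth column and build a comparison chain map to the bar resolution of $M_4$: the cocycle $\tilde\varphi$ together with the first-order coherence $\psi$ (which is available already because $\varphi$ defines a Puppe structure) produces such a chain map up to a defect measured by $d_h\psi$, and the new input $\eta$ (equivalent to $d_2(\varphi)=0$) absorbs this defect. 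The outcome is that the class of $E_4$ is $\varphi(M_4)$; hence the candidate fourth triangle is $\varphi$-exact and the octahedral axiom holds.

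The hard part will be the execution of the last paragraph: keeping track of the large commutative diagram relating $E_1$, $E_2$, $E_3$ and $E_4$, matching the connecting morphisms of the spliced extensions with the cochains $\psi$ and $\eta$ on the nose, and carrying the Koszul signs through faithfully. One must also handle the non-uniqueness of $a$ and $b$, either by showing the class of $E_4$ is independent of the choices or by making the choices compatible with the cochain data. Conceptually, the computation should make transparent why $d_2(\varphi)=0$ --- a single coherence beyond the Toda bracket itself --- is exactly the amount of structure the octahedral axiom needs, the would-be next obstruction $d_h\eta\in C^{3,1}$ playing no role; the same bookkeeping is what yields the announced homological characterization of pre-triangulated DG- and $A$-infinity categories over $k$.
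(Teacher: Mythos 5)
Your overall strategy---reduce the octahedral axiom to an identity of $3$-extension classes in $\modulesfp{\C T}$ and then invoke the cochain data $(\tilde\varphi,\psi,\eta)$ trivializing $d_2(\varphi)$---is in the same spirit as the paper, although you route it through Verdier's classical octahedron, whereas the paper uses Neeman's mapping cone criterion together with an explicit reduction showing that the required property depends only on the induced morphism in $\modulesst{\C T}$, and then only on an arbitrary short exact sequence of finitely presented modules. The problem is that your plan breaks at two decisive points.

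First, the claim that for fill-ins $a,b$ supplied by the fill-in axiom a snake-lemma chase shows the third column is a $3$-extension (so that it has a well-defined class $\{E_4\}$, possibly independent of choices) is false. For arbitrary fill-ins the candidate $C_f\st{a}\To C_{gf}\st{b}\To C_g\To\Sigma C_f$ need not even be a complex: if $(a,b)$ is a pair with $ba=0$ and $\alpha\colon\Sigma X\To Z$ is arbitrary, then $a'=a+w\alpha u'$ still satisfies $a'u=au=wg$ and $w'a'=u'$ (using $u'u=0$, $w'w=0$), but $ba'=v\alpha u'$, which is nonzero in general. So exactness of the Yoneda image of the candidate is not a formal consequence of the $3\times 3$ array; it is essentially the statement to be proved, and ``independence of the choices'' cannot hold. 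The fill-ins (equivalently, the compatible extension representatives) have to be \emph{constructed} from the coherence data, not chosen arbitrarily and then analysed.

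Second, the step where all the content lies---that $\tilde\varphi$, $\psi$ and $\eta$ yield a comparison chain map forcing $\{E_4\}=\varphi(M_4)$---is asserted rather than argued, and you yourself flag it as ``the hard part.'' In the paper this is precisely what the long second half of the proof does, by a different mechanism: $\varphi(M)$ is realized by the push-out $P_M$ of the bar resolution along $\tilde\varphi(M)$, the $1$-cochain $\psi(g)$ produces maps of extensions $P_g$, and the trivialization of $d_2(\varphi)$ provides $\zeta(g\otimes f)$ allowing one to correct $P_f$ by a summand $\delta$ so that $P_g(P_f+\delta)=P_{gf}$ when $g$ is surjective; exactness of the resulting middle column is then a separate argument via the snake and five lemmas, after which Yoneda converts the extension diagram back into the desired filler with exact mapping cone. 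Your outline neither specifies how $\eta$ corrects the fill-in (or the representative of $\varphi(M_4)$) nor how exactness of $E_4$ would then be verified, so as it stands the proposal is a plausible sketch whose crucial mechanism is missing.
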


\begin{proof}
	Neeman's mapping cone criterion \cite[Definition 1.3.13, Proposition 1.4.6, and Remark 1.4.7]{neeman_triangulated_2001} seems to be the preferred route to prove the octahedral axiom for triangulated categories with no (known) models \cite{amiot_structure_2007, muro_triangulated_2007}. 
		We must show that any map between the bases of two exact triangles can be extended to a triangle morphisms whose mapping cone is exact, i.e.~for any commutative diagram of solid arrows between exact triangles
	\begin{center}
		\begin{tikzcd}
			X\arrow[r,"f"]\arrow[d," h_1"]&Y\arrow[r,"i"]\arrow[d,"h_2"]&		C_f\arrow[r,"q"]\arrow[d,"h_3",dashed]&		\Sigma X\arrow[d,"\Sigma h_1"]\\
			X'\arrow[r,"f'"]&		
			Y'\arrow[r,"i'"]&
			C_{f'}\arrow[r,"q'"]	
			&\Sigma X'
		\end{tikzcd}
	\end{center}
	we can find a filler $h_3$ whose mapping cone 
	\begin{center}
		\begin{tikzcd}[column sep=16mm, ampersand replacement=\&]
			Y\oplus X'\arrow[r, "
			{\left(
				\begin{smallmatrix}
					-i&0\\
					h_2&f'
				\end{smallmatrix}
				\right)}
			"]\&C_f\oplus Y'\arrow[r,"{\left(\begin{smallmatrix}
					-q&0\\h_3&i'
				\end{smallmatrix}\right)}"]\& \Sigma X\oplus C_{f'}
			\arrow[r,"{\left(\begin{smallmatrix}
					-\Sigma f&0\\h_3&q'
				\end{smallmatrix}\right)}"]\& \Sigma Y\oplus \Sigma X'
		\end{tikzcd}
	\end{center}
	is an exact triangle. 
	
	We claim that, if this holds for $(h_1,h_2)$ then it also holds for $(h_1',h_2')$ provided there are maps
	\[X\stackrel{k_1}\longrightarrow T\stackrel{k_2}{\longrightarrow} X',\qquad \Theta\colon  Y\longrightarrow X',\]
	such that
	\begin{align*}
	f'k_2&=0,&
	h_1'-h_1-k_2k_1&=\Theta f.
	\end{align*}
	We check this claim by playing with Puppe's axioms. Indeed, by the first equation $k_2$ must factor through $\Sigma^{-1}q'$, so we can rephrase our conditions as follows: there exist maps 
	\[\Sigma^{-1}\Psi\colon X\longrightarrow \Sigma^{-1}C_{f'},\qquad \Theta\colon  Y\longrightarrow X',\]
	such that
	\begin{align*}
	h_1'-h_1&=\Theta f+\Sigma^{-1}(q'\Psi).
	\end{align*}
	We have
	\begin{equation*}
	\begin{split}
	(h_2'-h_2-f'\Theta)f&=(h_2'-h_2)f-f'\Theta f\\
	&=f'(h_1'-h_1)-f'(\Theta f)\\
	&=f'(h_1'-h_1-\Theta f)\\
	&=\underbrace{f'(\Sigma^{-1}q')}_{=0}(\Sigma^{-1}\Psi)=0.
	\end{split}
	\end{equation*}
	Therefore, there exists $\Phi\colon C_f\rightarrow Y'$ such that \[h_2'-h_2=\Phi i+f'\Theta.\] 
	We claim that, if we define \[h_3'=h_3+i'\Phi+\Psi q\] then $(h_1',h_2',h_3')$ is a morphism of triangles. We only have to check that the two squares on the right, those containing $h_3'$, commute. This amounts to
	\begin{align*}
	h_3'i&=(h_3+i'\Phi+\Psi q)i&
	q'h_3'&=q'(h_3+i'\Phi+\Psi q)
	\\
	&=h_3i+i'\Phi i+\Psi qi&
	&=q'h_3+q'i'\Phi+q'\Psi q\\
	&=i'h_2+i'\Phi i&
	&=(\Sigma h_1)q+q'\Psi q\\
	&=i'h_2+i'\Phi i+i'f'\Theta\quad {\scriptstyle (i'f'=0)}&
	&=(\Sigma h_1)q+q'\Psi q+\Sigma(\Theta f)q\quad {\scriptstyle ((\Sigma f)q=0)}\\
	&=i'(h_2+\Phi i+f'\Theta)&
	&=(\Sigma h_1+q'\Psi +\Sigma(\Theta f))q\\
	&=i'h_2',&
	&=(\Sigma h_1')q.\\
	\end{align*}
	The map of triangles $(h_1',h_2',h_3')$ is homotopic to $(h_1,h_2,h_3)$ in the sense of \cite[Definition 1.3.2]{neeman_triangulated_2001} by construction. This implies that the mapping cone of the former is isomorphic to the mapping cone of the later \cite[Lemma 1.3.3]{neeman_triangulated_2001}, hence it is also exact.
	
	
	Exact triangles can be made into a category whose maps are just morphisms between the bases, i.e.~pairs $(h_1,h_2)$ as above. We can even form the quotient category under the equivalence relation $(h_1,h_2)\sim(h_1',h_2')$ defined by the existence of $\Sigma^{-1}\Psi$ and $\Theta$ as above. The quotient category is equivalent to $\modulesst{\C T}$, and the equivalence is realized by the functor sending an exact triangle $(f,i,q)$ to $\ker \C T(-,f)$ (this kernel is taken in $\modulesfp{\C T}$). This can be easily checked by using the alternative descriptions of the abelianization $\modulesfp{\C T}$ in \cite[5.1]{neeman_triangulated_2001} and the construction of the quotient category $\modulesst{\C T}$. Hence, we have just proved that the property we have to check only depends on the morphism $\ker \C T(-,f)\rightarrow \ker \C T(-,f')$ in $\modulesst{\C T}$ induced by $(h_1,h_2)$. It does not even depend on the triangles $(f,i,q)$ and $(f',i',q')$ lifting $\ker \C T(-,f)$ and $\ker \C T(-,f')$ through the previous equivalence of categories.
	
	Let us denote $A=\ker \C T(-,f)$ and $A'=\ker \C T(-,f')$. Using the suspension functor $S$ in the stable module category we get 
	\[\homst_{\C T}(A,A')\cong \ext_{\C T}^{1}(S A,A').\] We can assume that $SA$ is defined by the short exact sequence
	\[A\hookrightarrow\C T(-,X)\twoheadrightarrow SA\]
	in $\modulesfp{\C T}$ arising from the exact triangle $(f,i,q)$, hence $SA$ is also $\ker \C T(-,i)$. The short exact sequence 
	\[A'\hookrightarrow B\twoheadrightarrow SA\]
	representing the extension corresponding to the map $A\rightarrow A'$ induced by $(h_1,h_2)$ can be obtained from the following diagram
	\begin{center}
		\begin{tikzcd}[column sep=16mm, ampersand replacement=\&]
			X'\arrow[r,"f'"]\arrow[d, hook]\&
			Y'	\arrow[d, hook]
				\\
			Y\oplus X'\arrow[d, two heads]\arrow[r, "
			{\left(
				\begin{smallmatrix}
					-i&0\\
					h_2&f'
				\end{smallmatrix}
				\right)}
			"]\&C_f\oplus Y'\arrow[d, two heads]\\
			Y\arrow[r,"-i"] \&		C_f
		\end{tikzcd}
	\end{center}
	first embedding it in $\modulesfp{\C T}$ through the Yoneda inclusion and then taking kernels of horizontal arrows. Here, the vertical arrows are the obvious inclusions and projections of factors of a direct sum. Indeed, by the following paragraph we obtain a short exact sequence when taking kernels.
	
	For any choice of filler $h_3$ in the very first diagram of this proof, the mapping cone fits in the following commutative diagram of triangles
	\begin{center}
		\begin{tikzcd}[column sep=16mm, ampersand replacement=\&]
			X'\arrow[r,"f'"]\arrow[d, hook]\&
			Y'\arrow[r,"f'"]	\arrow[d, hook]
			\&C_{f'}\arrow[d, hook]\arrow[r,"q'"]\&\Sigma	X'\arrow[d, hook]
			\\
			Y\oplus X'\arrow[d, two heads]\arrow[r, "
			{\left(
				\begin{smallmatrix}
					-i&0\\
					h_2&f'
				\end{smallmatrix}
				\right)}
			"]\&C_f\oplus Y'\arrow[d, two heads]\arrow[r,"{\left(\begin{smallmatrix}
					-q&0\\h_3&i'
				\end{smallmatrix}\right)}"]\& \Sigma X\oplus C_{f'}\arrow[d, two heads]
			\arrow[r,"{\left(\begin{smallmatrix}
					-\Sigma f&0\\\Sigma h_1&q'
				\end{smallmatrix}\right)}"]\& \Sigma Y\oplus \Sigma X'\arrow[d, two heads]\\
			Y\arrow[r,"-i"] \&		C_f\arrow[r,"-q"] \&		\Sigma X \arrow[r,"-\Sigma f"]\&\Sigma Y 
		\end{tikzcd}
	\end{center}	
	with exact top and bottom triangles. In particular, the top and bottom sequences can be extended to the right in the usual way, $3$-periodic twisted by $\Sigma$. These sequences become injective resolutions of $A$ and $SA'$ in $\modulesfp{\C T}$ via the Yoneda inclusion. By the long exact homology sequence, we can actually do the same with the middle sequence, despite it need not be an exact triangle. This yields a resolution of $B$.
	
	Now, we consider the following induced diagram in $\modulesfp{\C T}$ where, abusing notation, we identify each object in $\C T$ with its Yoneda image in $\modulesfp{\C T}$,
	\begin{center}
		\begin{tikzcd}[column sep=13mm, ampersand replacement=\&]
			A'\arrow[r, hook]\arrow[d, hook]\&X'\arrow[r,"f'"]\arrow[d, hook]\&
			Y'\arrow[r,"f'"]	\arrow[d, hook]
			\&C_{f'}\arrow[d, hook]\arrow[r, two heads]\&\Sigma A'\arrow[d, hook]
			\\
			B\arrow[r, hook]\arrow[d, two heads]\&Y\oplus X'\arrow[d, two heads]\arrow[r, "
			{\left(
				\begin{smallmatrix}
					-i&0\\
					h_2&f'
				\end{smallmatrix}
				\right)}
			"]\&C_f\oplus Y'\arrow[d, two heads]\arrow[r,"{\left(\begin{smallmatrix}
					-q&0\\h_3&i'
				\end{smallmatrix}\right)}"]\& \Sigma X\oplus C_{f'}\arrow[d, two heads]
			\arrow[r, two heads]\& \Sigma B\arrow[d, two heads]\\
			SA\arrow[r, hook]\&Y\arrow[r,"-i"] \&		C_f\arrow[r,"-q"] \&		\Sigma X \arrow[r, two heads]\& \Sigma SA
		\end{tikzcd}
	\end{center}
	Here, the top and bottom extensions represent $\varphi(\Sigma A')$ and $\varphi(\Sigma SA)$, respectively, since they have been obtained from exact triangles, see Section \ref{heller_section}. We have to show that we can find $h_3$ such that the middle extension represents $\varphi(\Sigma B)$. Since we have checked that we can suitably modify $h_1$ and $h_2$, it suffices to show that we can find a representative of $\varphi(\Sigma B)$
	\[B\hookrightarrow P_2\rightarrow P_1\rightarrow P_0\rightarrow \Sigma B\] fitting into a commutative diagram with exact columns,
	\begin{center}
		\begin{tikzcd}[column sep=13mm, ampersand replacement=\&]
			A'\arrow[r, hook]\arrow[d, hook]\&X'\arrow[r,"f'"]\arrow[d, hook]\&
			Y'\arrow[r,"f'"]	\arrow[d, hook]
			\&C_{f'}\arrow[d, hook]\arrow[r, two heads]\&\Sigma A'\arrow[d, hook]
			\\
			B\arrow[r, hook]\arrow[d, two heads]\&P_2\arrow[d, two heads]\arrow[r]\&P_1\arrow[d, two heads]\arrow[r]\& P_0\arrow[d, two heads]
			\arrow[r, two heads]\& \Sigma B\arrow[d, two heads]\\
			SA\arrow[r, hook]\&Y\arrow[r,"-i"] \&		C_f\arrow[r,"-q"] \&		\Sigma X \arrow[r, two heads]\& \Sigma SA
		\end{tikzcd}
	\end{center}
	since then the middle extension must come from a triangle as above. We can even replace the given representatives of $\varphi(\Sigma A')$ and $\varphi(\Sigma SA)$ with any others. Indeed, using standard arguments from homological algebra, like uniqueness of resolutions, it is easy to check that if we can find this diagram for two given representatives then we can also do it for any others. It is not even important that the middle $\C T$-modules are finitely presented. Note also that any short exact sequence in $\modulesfp{\C T}$ can arise as \[A'\hookrightarrow B\twoheadrightarrow SA.\]
	Hence, below we consider an arbitrary one.	
	
	In order to simplify notation, we move from $\modulesfp{\C T}$ to $\modulesfp{\C T_\Sigma}$ in the rest of this proof (it is equivalent since the former abelian category is the degree $0$ part of the latter graded abelian category). We now recall some notation and facts from the proof of Proposition \ref{Heller_criterion_by_differential} that we also need here.

	Let $M$ be a finitely presented $\C T_\Sigma$-module. The element $\varphi(M)\in \ext_{\C T_{\Sigma}}^{3,-1}(M,M)$ is represented by a morphism \[\tilde\varphi(M)\colon M\otimes_{\C T_{\Sigma}}B_3(\C T_{\Sigma})\To M.\] A representing extension can be obtained by taking push-out along $\tilde\varphi(M)$, as in the following diagram
	\begin{center}
		\begin{tikzcd}[column sep=3.5mm]
			M\otimes_{\C T_{\Sigma}}B_3(\C T_{\Sigma})\arrow[r,"d"]\arrow[d,"\tilde\varphi(M)"'] \arrow[dr, phantom, "\text{\scriptsize push}"]&
			M\otimes_{\C T_{\Sigma}}B_2(\C T_{\Sigma})\arrow[r,"d"]\arrow[d]&
			M\otimes_{\C T_{\Sigma}}B_1(\C T_{\Sigma})\arrow[r,"d"]\arrow[d, equal]&
			M\otimes_{\C T_{\Sigma}}B_0(\C T_{\Sigma})\arrow[r,"\epsilon"]\arrow[d, equal]&
			M\arrow[d, equal]\\
			M\arrow[r, hook]&
			P_M\arrow[r]&
			M\otimes_{\C T_{\Sigma}}B_1(\C T_{\Sigma})\arrow[r]&
			M\otimes_{\C T_{\Sigma}}B_0(\C T_{\Sigma})\arrow[r]&
			M
		\end{tikzcd}
	\end{center}
	Here, in the top row, $d$ actually means $\operatorname{id}_M\otimes d$, where $d$ is the bar complex differential, and similarly for the augmentation $\epsilon$. 
	
	Given a morphism $g\colon M\rightarrow N$ in $\modulesfp{\C T_\Sigma}$, the cocycle condition \[(-1)^{\abs{g}}g\cdot \varphi(M)=\varphi(N)\cdot g\]
	is realized by the existence of
	\[\psi(g)\colon M\otimes_{\C T_{\Sigma}}B_2(\C T_{\Sigma})\To N,\]
	which is $k$-linear in $g$, such that 
	\[\psi(g)(d\otimes \operatorname{id})=(-1)^{\abs{g}}g\tilde \varphi(M)-\tilde\varphi(N) (g\otimes\operatorname{id}).\]	
	This allows the definition of a morphism of extensions 
	\begin{center}
		\begin{tikzcd}
			M\arrow[r, hook]\arrow[d,"(-1)^{\abs{g}}g"']&
			P_M\arrow[r]\arrow[d,"P_g"]&
			M\otimes_{\C T_{\Sigma}}B_1(\C T_{\Sigma})\arrow[r]\arrow[d,"g\otimes\operatorname{id}"]&
			M\otimes_{\C T_{\Sigma}}B_0(\C T_{\Sigma})\arrow[r]\arrow[d,"g\otimes\operatorname{id}"]&
			M\arrow[d,"g"]\\
			N\arrow[r, hook]&
			P_N\arrow[r]&
			N\otimes_{\C T_{\Sigma}}B_1(\C T_{\Sigma})\arrow[r]&
			N\otimes_{\C T_{\Sigma}}B_0(\C T_{\Sigma})\arrow[r]&
			N
		\end{tikzcd}
	\end{center}
	where the morphism $P_g$ is defined by applying the universal property of a push-out to the following diagram
	\begin{center}
		\begin{tikzcd}
			M\otimes_{\C T_{\Sigma}}B_3(\C T_{\Sigma})\arrow[r,"\operatorname{id}_M\otimes d"]\arrow[d,"\tilde\varphi(M)"] \arrow[dr, phantom, "\text{\scriptsize push}"]\arrow[d]&
			M\otimes_{\C T_{\Sigma}}B_2(\C T_{\Sigma})\arrow[d]\arrow[ddr, bend left=10, "\parbox{18mm}{\scriptsize sum of the two dashed paths}", near end]\arrow[r,"g\otimes\operatorname{id}", dashed]\arrow[ldd, dashed, bend left=10, "\psi(g)"]&
			N\otimes_{\C T_{\Sigma}}B_2(\C T_{\Sigma})\arrow[dd, bend left=75, dashed]\\
			M\arrow[r, hook, crossing over]\arrow[d,"(-1)^{\abs{g}}g"']&P_M\arrow[dr, dotted, "P_f"]&\\
			N\arrow[rr, hook]&
			&
			P_N
		\end{tikzcd}
	\end{center}

	If $d_2(\varphi)=0$ then, for any pair of composable morphisms in $\modulesfp{\C T_\Sigma}$,
	\[L\stackrel{f}{\To}M\stackrel{g}{\To }N,\]
	there exists a morphism
	\[\zeta(g\otimes f)\colon L\otimes B_1(\C T_\Sigma)\longrightarrow N,\]
	bilinear in $f$ and $g$, such that
	\[\zeta(g\otimes f)(\operatorname{id}\otimes d)=(-1)^{\abs{g}}g\psi(f)-\psi(gf)+\psi(g)(f\otimes\operatorname{id}).\]
	In particular, $P_gP_f-P_{gf}$ is the composite
	\[P_L\longrightarrow L\otimes B_1(\C T_\Sigma)\stackrel{\zeta(g\otimes f)}\longrightarrow N\hookrightarrow P_N.\]
	
	Assume $g\colon M\twoheadrightarrow N$ is surjective. Then we can find a lift \[\zeta'\colon L\otimes B_1(\C T_\Sigma)\longrightarrow M\] of $\zeta(g\otimes f)$ along $g$, and if we define $\delta$ as 
	\[(-1)^{\abs{g}+1}\delta\colon P_L\longrightarrow L\otimes B_1(\C T_\Sigma)\stackrel{\zeta'}\longrightarrow M\hookrightarrow P_M,\]
	then
	\[P_g(P_f+\delta)=P_{gf}.\]
	Moreover, 
	\begin{center}
		\begin{tikzcd}
			L\arrow[r, hook]\arrow[d,"(-1)^{\abs{f}}f"']&
			P_L\arrow[r]\arrow[d,"P_f+\delta"]&
			L\otimes_{\C T_{\Sigma}}B_1(\C T_{\Sigma})\arrow[r]\arrow[d,"f\otimes\operatorname{id}"]&
			L\otimes_{\C T_{\Sigma}}B_0(\C T_{\Sigma})\arrow[r]\arrow[d,"f\otimes\operatorname{id}"]&
			L\arrow[d,"f"]\\
			M\arrow[r, hook]&
			P_M\arrow[r]&
			M\otimes_{\C T_{\Sigma}}B_1(\C T_{\Sigma})\arrow[r]&
			M\otimes_{\C T_{\Sigma}}B_0(\C T_{\Sigma})\arrow[r]&
			M
		\end{tikzcd}
	\end{center}
	is still a map of extensions.
	
	We claim that, if 
	\[L\stackrel{f}{\hookrightarrow}M\stackrel{g}{\twoheadrightarrow}N\]
	is a short exact sequence, then so are the colums of 
	\begin{center}
		\begin{tikzcd}
			L\arrow[r, hook]\arrow[d, hook,"(-1)^{\abs{f}}f"']&
			P_L\arrow[r]\arrow[d,"P_f+\delta"]&
			L\otimes_{\C T_{\Sigma}}B_1(\C T_{\Sigma})\arrow[r]\arrow[d, hook,"f\otimes\operatorname{id}"]&
			L\otimes_{\C T_{\Sigma}}B_0(\C T_{\Sigma})\arrow[r]\arrow[d, hook,"f\otimes\operatorname{id}"]&
			L\arrow[d, hook,"f"]\\
			M\arrow[r, hook]\arrow[d, two heads,"(-1)^{\abs{g}}g"']&
			P_M\arrow[r]\arrow[d,"P_g"]&
			M\otimes_{\C T_{\Sigma}}B_1(\C T_{\Sigma})\arrow[r]\arrow[d, two heads,"g\otimes\operatorname{id}"]&
			M\otimes_{\C T_{\Sigma}}B_0(\C T_{\Sigma})\arrow[r]\arrow[d, two heads,"g\otimes\operatorname{id}"]&
			M\arrow[d, two heads,"g"]\\
			N\arrow[r, hook]&
			P_N\arrow[r]&
			N\otimes_{\C T_{\Sigma}}B_1(\C T_{\Sigma})\arrow[r]&
			N\otimes_{\C T_{\Sigma}}B_0(\C T_{\Sigma})\arrow[r]&
			N
		\end{tikzcd}
	\end{center}
	The only column where the claim is not obvious is the second one. We know at least that $P_g(P_f+\delta)=P_{gf}=P_0=0$. If we write $Q_L,Q_M,Q_N$ for the kernels of the horizontal arrows between the third and fourth columns, the long exact homology sequence yields a short exact sequence
	\[Q_L\hookrightarrow Q_M\twoheadrightarrow Q_N.\]
	These are also the images of the horizontal arrows between the second and third columns. Hence the previous short exact sequence fits in a commutative diagram
	\begin{center}
	\begin{tikzcd}
		L\arrow[r, hook]\arrow[d, hook,"(-1)^{\abs{f}}f"']&
		P_L\arrow[r, two heads]\arrow[d,"P_f+\delta"]&
		Q_L\arrow[d, hook]\\
		M\arrow[r, hook]\arrow[d, two heads,"(-1)^{\abs{g}}g"']&
		P_M\arrow[r,two heads]\arrow[d,"P_g"]&
		Q_M\arrow[d, two heads]\\
		N\arrow[r, hook]&
		P_N\arrow[r]&
		Q_N
	\end{tikzcd}
	\end{center}
	Applying the snake lemma to the top and bottom maps of short exact sequences we see that $P_f+\delta$ is injective and $P_g$ is surjective. Moreover, since the middle column composes to zero, the diagram and the snake lemma also yield a map of extensions
		\begin{center}
		\begin{tikzcd}
			L\arrow[r, hook]\arrow[d, equal]&
			P_L\arrow[r, two heads]\arrow[d]&
			Q_L\arrow[d, equal]\\
			L\arrow[r, hook]&
			\ker P_g\arrow[r]&
			Q_L
		\end{tikzcd}
	\end{center}
	which concludes the proof of exactness since the middle vertical arrow is necessarily an isomorphism by the five lemma.	
\end{proof}

Following \cite{bondal_enhanced_1991}, we say that a DG- or $A$-infinity category $\C C$ is \emph{pre-triangulated} if $H^*(\C C)$ is weakly stable, i.e.~up to equivalence it is of the form $\C T_\Sigma$, and the usual Massey product in the cohomology $H^*(\C C)\simeq\C T_{\Sigma}$ induces a triangulated structure on $H^0(\C C)\simeq\C T$ with suspension $\Sigma$. Over a field, the \emph{universal Massey product} of a DG- or $A$-infinity category $\C C$,
\[\{m_3\}\in \hh{3,-1}{H^*(\C C),H^*(\C C)},\]
is the universal Massey product of any minimal model (it does not depend on the choice). 

The following result is a direct consequence of Proposition \ref{Heller_criterion_by_differential} and Theorem \ref{octahedral_theorem}.

\begin{corollary}\label{pretriangulated}
	Let $\C C$ be a DG- or $A$-infinity category over a field $k$ such that idempotents in $H^0(\C C)$ split. The following statements are equivalent:
	\begin{itemize}
		\item $\C C$ is pre-triangulated.
		\item $H^*(\C C)$ is weakly stable, $\modulesfp{H^0(\C C)}$ is Frobenius abelian, and the image of the universal Massey product $\{m_3\}\in \hh{3,-1}{H^*(\C C), H^*(\C C)}$ along the edge morphism \[\hh{3,-1}{H^*(\C C), H^*(\C C)}\longrightarrow \hh{0,-1}{\modulesfp{H^*(\C C)},\ext_{H^*(\C C)}^{3,\ast}}\] in Theorem \ref{theorem_edge_morphism} is a unit in the bigraded algebra \[\hh{0,*}{\modulesfp{H^*(\C C)},\widehat{\ext}_{H^*(\C C)}^{\star,\ast}}.\]
	\end{itemize}
\end{corollary}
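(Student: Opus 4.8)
The plan is to derive both implications from three earlier results: Theorem~\ref{theorem_edge_morphism}, which says the edge morphism carries the universal Massey product to the stable Toda bracket; Proposition~\ref{Heller_criterion_by_differential}, which identifies Puppe triangulated structures on $(\C T,\Sigma)$ with the units of $\hh{0,*}{\modulesfp{\C T_\Sigma},\widehat{\ext}_{\C T_\Sigma}^{\bullet,*}}$ lying in the kernel of $\kappa d_2$; and Theorem~\ref{octahedral_theorem}, which supplies the octahedral axiom when $d_2$ of the classifying cocycle vanishes. Both conditions in the statement include — or, for the first one, imply — that $H^*(\C C)$ is weakly stable, so I would assume this throughout (otherwise both conditions are false and there is nothing to prove). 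By Kadeishvili's theorem \cite{kadeishvili_theory_1980,lefevre-hasegawa_sur_2003} we may also replace $\C C$ by a minimal $A$-infinity model, since $\{m_3\}$, the graded category $H^*(\C C)$, and the abelian category $\modulesfp{H^0(\C C)}$ are unchanged. Write $\C T=H^0(\C C)$, which is idempotent complete by hypothesis, and identify $H^*(\C C)$ with $\C T_\Sigma$ for an automorphism $\Sigma$ of $\C T$.

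Suppose first that $\C C$ is pre-triangulated, so the Massey products make $\C T$ into a triangulated category with suspension $\Sigma$. Since $\C T$ is idempotent complete, Freyd's theorem \cite{freyd_stable_1966} shows that $\modulesfp{H^0(\C C)}=\modulesfp{\C T}$ is Frobenius abelian. Now $\C C$ is a minimal $A$-infinity enhancement of this triangulated category, so Theorem~\ref{theorem_edge_morphism} identifies the image of $\{m_3\}$ along the edge morphism with the stable Toda bracket of the triangulated structure, and by Corollary~\ref{Heller_graded} (or directly by Proposition~\ref{Heller_criterion_by_differential}) the Toda bracket of a triangulated structure is a unit of $\hh{0,*}{\modulesfp{\C T_\Sigma},\widehat{\ext}_{\C T_\Sigma}^{\bullet,*}}$. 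This gives the second statement.

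Conversely, suppose $\modulesfp{\C T}$ is Frobenius abelian and the image $\varphi\in\hh{0,-1}{\modulesfp{\C T_\Sigma},\ext_{\C T_\Sigma}^{3,*}}$ of $\{m_3\}$ along the edge morphism is a unit of $\hh{0,*}{\modulesfp{\C T_\Sigma},\widehat{\ext}_{\C T_\Sigma}^{\bullet,*}}$. The key point is that, being the image of a class along the edge morphism of the first quadrant cohomological spectral sequence of Proposition~\ref{spectral_sequence} applied to $\C T_\Sigma$, the class $\varphi$ is a permanent cycle; in particular $d_2(\varphi)=0$, and therefore $\kappa d_2(\varphi)=0$. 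Thus $\varphi$ is a unit lying in the kernel of $\kappa d_2$, so Proposition~\ref{Heller_criterion_by_differential} produces a Puppe triangulated structure on $\C T$ with suspension $\Sigma$; and since $d_2(\varphi)=0$, Theorem~\ref{octahedral_theorem} shows that this structure satisfies Verdier's octahedral axiom. Finally I would check that this triangulated structure is precisely the one induced by the Massey products: by Theorem~\ref{Toda_brackets_and_cohomology} the cocycle $\varphi$ corresponds to the stable Toda bracket of the structure just constructed, and the computation in the proof of Theorem~\ref{theorem_edge_morphism} shows that this Toda bracket coincides with the Massey product operation of the $A$-infinity category $\C T_\Sigma$ — both send $f,g,h$ to the coset $m_3(h,g,f)+(\Sigma^{-1}h)\cdot\C T(X,\Sigma^{-1}Z)+\C T(Y,\Sigma^{-1}T)\cdot f$. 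Hence the exact triangles, which are those $X\st{f}\To Y\st{i}\To C_f\st{q}\To\Sigma X$ with $\operatorname{id}_X\in\langle q,i,f\rangle$, are exactly the triangles detected by the Massey products, so $\C C$ is pre-triangulated.

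The step requiring the most care is this last identification in the converse direction: one must be sure that the triangulated structure that Proposition~\ref{Heller_criterion_by_differential} extracts from $\varphi$ is literally the structure prescribed by the Massey products, not merely some triangulated structure on the pair $(\C T,\Sigma)$; this, however, only repeats the indeterminacy bookkeeping already carried out for Theorem~\ref{theorem_edge_morphism}. The conceptual observation that makes the corollary a \emph{direct} consequence of Proposition~\ref{Heller_criterion_by_differential} and Theorem~\ref{octahedral_theorem} is that any element in the image of the edge morphism is automatically annihilated by $d_2$, so both of those results become applicable at once.
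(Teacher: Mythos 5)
Your proof is correct and follows essentially the route the paper intends: the paper gives no written argument but declares the corollary a direct consequence of Proposition \ref{Heller_criterion_by_differential} and Theorem \ref{octahedral_theorem}, and your write-up fleshes this out with exactly those ingredients (plus Freyd, Theorem \ref{theorem_edge_morphism}, and the permanent-cycle observation that the image of the edge morphism lies in $\ker d_2$), including the correct final check that the triangulated structure classified by $\varphi$ is the one induced by the Massey products.
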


\section{The topological case}\label{topological_section}

In this section we move to a non-additive setting. Let $\operatorname{Set}$ be the category of \emph{graded sets} $X=\{X_n\}_{n\in\mathbb Z}$. We endow it with the closed symmetric monoidal structure defined as \[(X\boxtimes Y)_n=\coprod_{n=p+q}X_p\times Y_q.\] In this section, a \emph{graded category} is a category enriched in $\operatorname{Set}$. We will also use graded liner categories, always defined over $\mathbb Z$. Here we will specify when a given (graded) category is linear.

The obvious forgetful functor
\[\modules{\mathbb Z}\longrightarrow\operatorname{Set}\]
is lax monoidal. Given two graded abelian groups $A$ and $B$, the natural map
\[A\boxtimes B\longrightarrow A\otimes B\]
is given by the universal bilinear maps $A_p\times B_q\rightarrow A_p\otimes B_q$. 
The forgetful functor has a left adjoint, the \emph{free graded abelian group} functor
\[\operatorname{Set}\longrightarrow\modules{\mathbb Z}\colon X\mapsto\mathbb Z(X),\]
which is strict monoidal. Neither is symmetric because there is no way to encode the Koszul sign rule in $\operatorname{Set}$. Nevertheless, that is sufficient to functorially define the graded linear category $\mathbb Z\C C$ associated to a graded category $\C C$, that we call \emph{linearization}. It has the same objects as $\C C$ and morphism objects \[(\mathbb Z\C C)(X,Y)=\mathbb Z\C C(X,Y),\]
and composition is defined by
\[\mathbb Z\C C(Y,Z)\otimes \mathbb Z\C C(X,Y)\cong \mathbb Z(\C C(Y,Z)\boxtimes \C C(X,Y))\longrightarrow \mathbb Z\C C(X,Z).\]
Linearization is the left adjoint of the forgetful functor from graded linear categories to graded categories. For the sake of simplicity, the linearization of a graded functor between graded categories $F\colon \C C\rightarrow\C D$ will also be denoted by $F\colon \mathbb Z\C C\rightarrow\mathbb Z\C D$. The lack of compatibility with the symmetry constraint implies that the linearization functor does not take tensor products of graded categories to tensor products of graded linear categories, but this will be irrelevant because we will only consider the latter. 

The \emph{cohomology} of a graded category $\C C$ with coefficients in a $\mathbb Z\C C$-bimodule $M$ is defined as
\[H^{\star,*}(\C C,M)=\hh{\star,*}{\mathbb Z\C C, M}.\]
A $\mathbb Z\C C$-bimodule can also be described as a family of graded abelian groups $M(X,Y)$ indexed by pairs of objects $X,Y$ in $\C C$
and, for each four objects $X,X',Y,Y'$ in $\C C$, a degree $0$ map of graded sets
\begin{align*}
\C C(Y,Y')\boxtimes M(X,Y)\boxtimes \C C(X',X)&\longrightarrow M(X',Y'),\\
(g, x, f)&\;\mapsto\; g\cdot x\cdot f,
\end{align*}
or equivalently, degree $0$ maps of graded sets, $p,n,q\in\mathbb Z$,
\begin{align*}
\C C^p(Y,Y')\times M^n(X,Y)\times \C C^q(X',X)&\longrightarrow M^{p+n+q}(X',Y'),\\
(g, x, f)&\;\mapsto\; g\cdot x\cdot f,
\end{align*}
which are linear in $x$ and satisfy the usual associativity and unit conditions. 

The previous cohomology of graded categories satisfies the same functoriality properties as Hochschild cohomology, compare \cite{muro_functoriality_2006}. 
The ungraded version was also considered in \cite{mitchell_rings_1972}. The cochain complex $\hc{\star, \ast}{\mathbb Z\C C,M}$ defining $H^{\star,*}(\C C,M)$ can also be described as
\[\hc{n,\ast}{\mathbb Z\C C,M}=\prod_{X_0,\dots, X_n} \hom^*_{\operatorname{Set}}(\C C(X_1,X_{0})\times\cdots\times\C C(X_n,X_{n-1}), M(X_n,X_0)),\]
where $\hom^*_{\operatorname{Set}}$ is the inner $\hom$ in graded sets. The differential is given by \eqref{hochschild_differential}.

The new cohomology theory makes sense even if $\C C$ is additive. In this case, it is (or deserves to be called) the \emph{topological Hochschild cohomology} or \emph{Mac Lane cohomology} of $\C C$, see \cite{pirashvili_mac_1992, jibladze_cohomology_1991}. Moreover, the obvious linear functor $\mathbb Z\C C\rightarrow\C C$ induces \emph{comparison morphisms}
\[\hh{\star,*}{\C C,M}\To H^{\star,*}(\C C,M)\]
for any $\C C$-bimodule $M$. This morphism is an isomorphism for $\star=0$ since the end of $M$ regarded as a $\C C$-bimodule is the same as if we regard it as a $\mathbb Z\C C$-bimodule, because $\mathbb Z\C C\rightarrow\C C$ is full. In particular, we can replace $HH^{0,*}$ with $H^{0,*}$ in all results of Sections \ref{heller_section} and \ref{toda_brackets_section}. At the level of cochains, the comparison morphism
\[\hc{\star,*}{\C C,M}\hookrightarrow \hc{\star,*}{\mathbb Z\C C,M}\]
is the inclusion of multilinear cochains.

Given a plain ungraded category $\C T$ and an automorphism $\Sigma\colon \C T\rightarrow\C T$, the graded category $\C T_\Sigma$ also makes sense in the non-additive context, and it is compatible with linearization, $(\mathbb Z\C T)_{\Sigma}=\mathbb Z(\C T_\Sigma)$. In particular, if $M$ is an ungraded $\mathbb Z\C T$-bimodule equipped with an isomorphism $\tau\colon M\cong M(\Sigma,\Sigma)$ 
such that, given
$g\in\C T(Y,Y')$, $x\in M(X,Y)$, and $f\in\C T(X',X)$,
\[\tau(g\cdot x\cdot f)=(\Sigma g)\cdot\tau(x)\cdot(\Sigma f),\]
we can define the $\C T_\Sigma$-bimodule $M_\tau$ as in Section \ref{Hochschild_cohomology_of_categories}. Proposition \ref{graded_ungraded_long_exact_sequence} applies, so we have a long exact sequence 
\begin{equation}\label{exact_cohomology_sequence_top}
	\begin{tikzcd}[row sep=5mm]
		\vdots\arrow[d]\\
		\hbw{n,*}{\C T_{\Sigma},M_{\tau}}\arrow[d, "i^*"]\\
		\hbw{n,*}{\C T,M_{\tau}}\arrow[d, "1-\tau_*^{-1}\Sigma^*"]\\ 
		\hbw{n,*}{\C T,M_{\tau}}\arrow[d]\\
		\hbw{n+1,*}{\C T_{\Sigma},M_{\tau}}\arrow[d]\\
		\vdots
	\end{tikzcd}
\end{equation}
This, or rather the proof of Proposition \ref{graded_ungraded_long_exact_sequence}, shows that $H^{\star,*}(\C T_{\Sigma},M_\tau)$ coincides with the translation cohomology of the pair $(\Sigma,\tau)$ defined in \cite{baues_homotopy_2007} and extensively used in \cite{baues_cohomologically_2008}. 

If $\C M$ is a stable model category, its homotopy category $\C T=\operatorname{Ho}\C M$ is triangulated \cite{hovey_model_1999} with the well known suspension functor $\Sigma\colon \C T\rightarrow \C T$. Moreover, objects, maps, and tracks (i.e.~homotopy classes of homotopies relative to the boundary) define a topological analogue of the universal Massey product, that we call \emph{universal Toda bracket} \cite[Remark 5.9]{baues_homotopy_2007},
\[\langle\C M\rangle\in H^{3,-1}(\C T_\Sigma,\C T_\Sigma),\]
save for the fact that the category $\C T$ may be big, but we can replace $\C T$ with any small triangulated subcategory of $\operatorname{Ho}\C M$, whose cohomology is then well defined. Hence, any topological enhancement of a small triangulated category defines a universal Toda bracket. 

A representing cocycle $m_3$ for $\langle\C M\rangle$ is formally a ternary operation as in the introduction, except for the fact that it need not be multilinear. Universal Toda brackets have been extensively studied in the ungraded setting, i.e.~the image of $\langle\C M\rangle$ along the morphism 
\[H^{3,-1}(\C T_\Sigma,\C T_\Sigma)\longrightarrow H^{3,-1}(\C T,\C T_\Sigma)\cong H^3(\C T,\C T(\Sigma,-))\]
fitting in a long exact sequence as above.
They indeed determine all homotopically defined Toda brackets in $\operatorname{Ho}\C M$ \cite{baues_cohomology_1989}, which characterize its triangulated structure in the way explained in the introduction. 

The spectral sequence in Proposition \ref{spectral_sequence} is also defined in the current non-additive context but the proof, although similar, needs a couple of significant modifications.  

\begin{proposition}\label{spectral_sequence_top}
	If $\C T$ is a small additive category such that $\modulesfp{\C T}$ is abelian and $\Sigma\colon\C T\rightarrow\C T$ is an automorphism, there is a first quadrant cohomological spectral sequence of graded abelian groups
	\[E_2^{p,q}=H^{p,*}(\modulesfp{\C T_{\Sigma}},\ext_{\C T_{\Sigma}}^{p,*})\Longrightarrow H^{p+q,*}(\C T_{\Sigma},\C T_{\Sigma}).\]
\end{proposition}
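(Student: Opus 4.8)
The plan is to reproduce the double-complex argument proving Proposition~\ref{spectral_sequence}, tracking where the ground field was used and replacing it; only two steps require a real modification. The first is the outer variable: over a field one used that $B_\star(\modulesfp{\C C})$ is an honest projective bimodule resolution, whereas here the cohomology of $\modulesfp{\C T_\Sigma}$ is by definition the Hochschild cohomology of the linearization $\mathbb Z\modulesfp{\C T_\Sigma}$, and a linearization is always locally free over $\mathbb Z$; hence $B_\star(\mathbb Z\modulesfp{\C T_\Sigma})$ is an honest projective resolution, and likewise $B_\bullet(\mathbb Z\C T_\Sigma)$ is an honest resolution computing the target $\hbw{\bullet,*}{\C T_\Sigma,\C T_\Sigma}=\hh{\bullet,*}{\mathbb Z\C T_\Sigma,\C T_\Sigma}$. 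So this side causes no difficulty.

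The second modification concerns the inner variable. Over a field, $M\otimes_{\C C}B_\bullet(\C C)$ is a \emph{projective} resolution of $M$ because $M$ takes free values; here an object $M$ of $\modulesfp{\C T_\Sigma}$ takes values in finitely presented abelian groups, which need not be flat, so $M\otimes_{\C T_\Sigma}B_\bullet(\C T_\Sigma)$ is only a resolution by $\mathbb Z$-induced modules and its $\hom$ computes $\ext$ relative to $\mathbb Z$-split sequences, not the $\ext_{\C T_\Sigma}^{q,*}$ of the statement. Instead I would use the cotriple (simplicial bar) resolution $G^{\bullet+1}(-)\to(-)$ attached to the adjunction between $\C T_\Sigma$-modules and $(\operatorname{ob}\C T_\Sigma)$-graded sets: if $U$ denotes the forgetful functor to $(\operatorname{ob}\C T_\Sigma)$-indexed graded abelian groups, $U_0$ the further forgetful functor to $(\operatorname{ob}\C T_\Sigma)$-graded sets, $\mathbb Z$ the free graded abelian group functor and $F$ the free $\C T_\Sigma$-module functor left adjoint to $U$, then $G=F\mathbb Z U_0U$. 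Since the free abelian group on a set is free, $G$ takes values in sums of representables, hence genuine projectives; the augmented simplicial object $G^{\bullet+1}M\to M$ becomes split after applying $U_0U$, so (by the standard properties of cotriple resolutions, $\C T_\Sigma$-modules being monadic over $(\operatorname{ob}\C T_\Sigma)$-graded sets) its associated complex is a resolution; and therefore $\hom_{\C T_\Sigma}^*(G^{\bullet+1}M,N)$ computes the absolute $\ext_{\C T_\Sigma}^{*,*}(M,N)$. Crucially $G^{\bullet+1}$ is functorial in $M$.

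With this I would form the double complex
\[
C^{\star,\bullet}=\hom_{(\mathbb Z\modulesfp{\C T_\Sigma})^{\env}}^*\bigl(B_\star(\mathbb Z\modulesfp{\C T_\Sigma}),\,\hom_{\C T_\Sigma}^*(G^{\bullet+1}(-),-)\bigr)
\]
and run its two spectral sequences as in Proposition~\ref{spectral_sequence}. Taking vertical cohomology first yields $E_2^{p,q}=\hbw{p,*}{\modulesfp{\C T_\Sigma},\ext_{\C T_\Sigma}^{q,*}}$. For the other spectral sequence one must show that, for each $n$, the cohomology $\hbw{\star,*}{\modulesfp{\C T_\Sigma},\hom_{\C T_\Sigma}^*(G^{n+1}(-),-)}$ is concentrated in $\star=0$, where it equals the end, and that the resulting complex in $n$ computes $\hbw{\bullet,*}{\C T_\Sigma,\C T_\Sigma}$. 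By adjunction, $\hom_{\C T_\Sigma}^*(G^{n+1}M,N)\cong\prod_X\hom_{\operatorname{Set}}^*\bigl((U_0UG^{n}M)(X),\,N(X)\bigr)$, so this bimodule is a product, over the objects $X$, of bimodules whose covariant variable is ``evaluation at $X$'' (co-represented by the projective $\C T_\Sigma(-,X)$) and whose contravariant variable factors through $U_0U$; the chain homotopy of \cite[Lemma~3.10]{jibladze_cohomology_1991}, of the form $h(\varphi)(f_1,\dots,f_n)(g)=\varphi(f_1,\dots,f_n,g)(1_X)$, then kills the cohomology in positive degrees. Finally a cobar/co-Yoneda computation identifies the surviving $\star=0$ row with the Hochschild cochain complex $\hom_{(\mathbb Z\C T_\Sigma)^{\env}}^*(B_\bullet(\mathbb Z\C T_\Sigma),\C T_\Sigma)$, whose cohomology is the asserted target.

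The main obstacle is precisely this last identification together with the second spectral sequence. In the field proof the inner resolution $-\otimes_{\C C}B_\bullet(\C C)$ preserved colimits, so density of $\C T_\Sigma$ in $\modulesfp{\C T_\Sigma}$ let one recognize the end as the left Kan extension of its restriction to the representables and reduce at once to the bar complex of $\C T_\Sigma$. The cotriple resolution $G^{\bullet+1}$ does \emph{not} preserve all colimits --- reaching genuine projectives forces the free abelian group functor into the construction, and passing through the underlying-set functor destroys cocontinuity --- so that shortcut is unavailable, and both the positive-degree vanishing and the reduction to the representables must be carried out directly, pushing everything through the forgetful functors to $(\operatorname{ob}\C T_\Sigma)$-graded sets. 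This is the substance of the ``couple of significant modifications'' mentioned in the statement and parallels the constructions of \cite{jibladze_cohomology_1991}.
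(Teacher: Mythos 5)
Your outer variable and the two-spectral-sequence scheme match the paper, and your $E_2$-identification would be fine (any functorial resolution by sums of representables computes $\ext_{\C T_\Sigma}^{q,*}$). The gap is in the inner resolution you chose. The paper does not use the cotriple resolution $G^{\bullet+1}M$ for $G=F\mathbb Z U_0U$; it uses the two-sided bar construction $\mathbb Z(M)\otimes_{\mathbb Z\C T_{\Sigma}}B_{\bullet}(\mathbb Z\C T_{\Sigma})\otimes_{\mathbb Z\C T_{\Sigma}}\C T_{\Sigma}$ built from the forceful linearization, proved to be a projective resolution by identifying it with Andr\'e's complex for the models $\C T_\Sigma\subset\modulesfp{\C T_\Sigma}$ (this is where the hypotheses on $\C T$ enter, via resolutions by representables and Dold--Kan). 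That choice is not cosmetic: it is exactly what makes the two steps you defer go through. With the paper's resolution the row coefficients split as products of $D_{X,Y}(M,N)=\hom_{\operatorname{Set}}^*(M(X),N(Y))$, and the Jibladze--Pirashvili contraction works because an element of $M(X)$ \emph{is} a morphism $\C T_\Sigma(-,X)\to M$ that can be appended to the string of arguments of a cochain. Your row coefficients are $\prod_X\hom_{\operatorname{Set}}^*((G^{n}M)(X),N(X))$: the contravariant slot passes through $G^{n}$, an element $z\in(G^{n}M)(X)$ is a morphism into $G^{n}M$, not into $M$, and the homotopy formula you quote does not typecheck. The obvious repair --- compose with the counit $\epsilon^{n}\colon G^{n}\to\operatorname{id}$ and evaluate on the canonical element $u\in(G^{n}\C T_\Sigma(-,X))(X)$ built from units on the free object --- fails: in $dh+hd$ the term that should return $\varphi(f_1,\dots,f_m)(z)$ instead returns the value of $\varphi(f_1,\dots,f_m)$ on $(G^{n}\tilde z)(u)$, where $\tilde z$ corresponds to $\epsilon^{n}(z)$, and already for $n=1$ this differs from $z$. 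So the acyclicity of the rows in positive Hochschild degree is not established for your resolution, and it is not clear that it even holds.

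The second deferred step is equally essential. You rightly note that the density/Kan-extension reduction of the surviving $0$-row to $\hom_{\mathbb Z\C T_{\Sigma}^{\env}}^*(B_\bullet(\mathbb Z\C T_{\Sigma}),\C T_{\Sigma})$ needs the inner construction to preserve the canonical colimits presenting each module as a colimit of representables, and that $G^{\bullet+1}$ does not; but you then only assert that the reduction ``must be carried out directly'' without carrying it out. This is not a routine verification: it is precisely the point of the paper's choice of $\mathbb Z(-)\otimes_{\mathbb Z\C T_{\Sigma}}B_{\bullet}(\mathbb Z\C T_{\Sigma})\otimes_{\mathbb Z\C T_{\Sigma}}\C T_{\Sigma}$, for which the colimit behaviour is argued and the restriction to $\C T_\Sigma$ visibly yields the bar complex of $\mathbb Z\C T_\Sigma$ with coefficients in $\C T_\Sigma$. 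As written, your proposal establishes neither the acyclicity of the rows nor the identification of the abutment; both would be recovered by switching to the paper's Andr\'e-style resolution, or would require genuinely new arguments for the cotriple resolution that the proposal does not supply.
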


\begin{proof}
	Now the spectral sequence is associated to the following bicomplex of graded modules $C^{\star,\bullet}$,
	\[\hom_{\mathbb Z\modulesfp{\C T_{\Sigma}}^{\env}}^*(B_{\star}(\mathbb Z\modulesfp{\C T_{\Sigma}}),\hom_{\C T_{\Sigma}}^*(\mathbb Z(-)\otimes_{\mathbb Z\C T_{\Sigma}}B_{\bullet}(\mathbb Z\C T_{\Sigma})\otimes_{\mathbb Z\C T_{\Sigma}}\C T_{\Sigma},-)).\]
	Here we use the \emph{forceful linearization} of right $\C T_\Sigma$-modules, which is a non-additive functor \[\mathbb Z(-)\colon \modules{\C T_{\Sigma}}\longrightarrow\modules{\mathbb Z\C T_{\Sigma}}\]
	defined as follows. Given a right $\C T_{\Sigma}$-module $M$, $\mathbb Z(M)(X)=\mathbb Z(M(X))$ for any object $X$ in $\C T_\Sigma$ and the right action of $\mathbb Z\C T_{\Sigma}$ is given by
	\[\begin{split}
	\mathbb Z(M)(X)\otimes\mathbb Z\C T_{\Sigma}(X',X)&\cong\mathbb Z(M(X)\boxtimes \C T_{\Sigma}(X',X))\\&\rightarrow \mathbb Z(M(X)\otimes \C T_{\Sigma}(X',X))\\&\rightarrow \mathbb Z(M)(X'). 
	\end{split}\]
	Moreover, given a morphism of right $\C T_{\Sigma}$-modules $f\colon M\rightarrow N$ of any degree, the induced morphism $\mathbb Z(f)\colon \mathbb Z(M)\rightarrow \mathbb Z(N)$ is given by the free abelian group homomorphisms $\mathbb Z(f)(X)\colon \mathbb Z(M)(X)\rightarrow \mathbb Z(N)(X)$ defined by $f(X)\colon M(X)\rightarrow N(X)$ on the bases, where $X$ is any object in $\C T_\Sigma$. 
	We remark for later use that the forceful linearization functor preserves colimits, since the free grade dabelian group functor, which is a left adjoint, preserves colimits, and colimits of right modules are computed pointwise.
	
	An element of $C^{p,q}$ is the same a family of maps of graded sets
	\[\prod_{i=1}^p\hom^*_{\C T_\Sigma}(M_{i},M_{j-1})
	\times M_p(X_0)\times
	\prod_{j=1}^q\C T_\Sigma(X_{j},X_{j-1})\longrightarrow M_0(X_q)\]
	indexed by all sequences of objects $M_0,\dots, M_p$ in $\modulesfp{\C T_\Sigma}$ and $X_0,\dots, X_q$ in $\C T_\Sigma$. With this description, the horizontal and vertical differentials are again \eqref{differentials_bicomplex}. 
	
	We must identify the $E_2$-term and the target of the spectral sequence. First, observe that for each finitely presented right $\C T_{\Sigma}$-module $M$, the complex of projective right $\C T_{\Sigma}$-modules
	\[\mathbb Z (M)\otimes_{\mathbb Z\C T_{\Sigma}}B_{\bullet}(\mathbb Z\C T_{\Sigma})\otimes_{\mathbb Z\C T_{\Sigma}}\C T_{\Sigma}\]
	is the standard complex computing the André--Quillen homology $H_*(M,y)$ of $M$ with coefficients in the Yoneda inclusion $y\colon \C T_{\Sigma}\hookrightarrow\modulesfp{\C T_{\Sigma}}$ in the following graded \emph{catégorie avec modèles munis de coefficients}, using André's terminology, see \cite[Chapitre I]{andre_methode_1967}, \[\modulesfp{\C T_{\Sigma}}\supset\C T_{\Sigma}\stackrel{y}{\longrightarrow}\modulesfp{\C T_{\Sigma}}.\] 
	Indeed, 
	\begin{multline*}
	\mathbb Z (M)\otimes_{\mathbb Z\C T_{\Sigma}}B_{n}(\mathbb Z\C T_{\Sigma})\otimes_{\mathbb Z\C T_{\Sigma}}\C T_{\Sigma}\\
	=\bigoplus_{X_0,\dots, X_n} \mathbb Z(M(X_0))\otimes\mathbb Z\C T_{\Sigma}(X_1,X_{0})\otimes\cdots\otimes\mathbb Z\C T_{\Sigma}(X_n,X_{n-1})\otimes\C T_{\Sigma}(-,X_n)\\
	=\bigoplus_{M\leftarrow X_0\leftarrow \cdots\leftarrow  X_n}\C T_{\Sigma}(-,X_n).
	\end{multline*}

	Unlike in Proposition \ref{spectral_sequence}, we here restrict the statement to graded linear categories of the form $\C T_{\Sigma}$ with $\C T$ additive and $\modulesfp{\C T}$ abelian. We will use this fact now. More precisely, in order to apply a result of André we will use that, under these hypotheses, any object in $\modulesfp{\C T_{\Sigma}}$ has a projective resolution by objects in $\C T_{\Sigma}$.

	Since $y$ is tautologically the restriction of the identity functor in $\modulesfp{\C T_{\Sigma}}$ to $\C T_\Sigma$, and the identity functor is exact, then $H_*(M,y)$ is naturally $M$ concentrated in degree $0$, see \cite[Proposition 13.2]{andre_methode_1967}. Therefore the complex $\mathbb Z (M)\otimes_{\mathbb Z\C T_{\Sigma}}B_{\bullet}(\mathbb Z\C T_{\Sigma})\otimes_{\mathbb Z\C T_{\Sigma}}\C T_{\Sigma}$ is a projective resolution of $M$. In particular, since $\mathbb Z\modulesfp{\C T_{\Sigma}}$ is locally free by definition, the $E_2$-term of the first-vertical-then-horizontal spectral sequence is as in the statement. 
	
	André's result would in principle require that $\C T_\Sigma$ identified with the full subcategory of projectives in $\modulesfp{\C T_\Sigma}$ under the Yoneda inclusion. That would be true if $\C T$ were idempotent complete, which is a harmless common assumption in this paper. Nevertheless, it suffices that we can form a simplicial resolution of any finitely presented right $\C T_\Sigma$-module $M$ with objects in $\C T_\Sigma$, and by the Dold--Kan equivalence this follows from the existence of a projective resolution of $M$ by objects in $\C T_\Sigma$.
	
	In order to compute the target of the previous spectral sequence of the bicomplex $C^{\star,\bullet}$ we consider the other one, exactly as in the proof of Proposition \ref{spectral_sequence}. 
	
	The $\mathbb Z\modulesfp{\C T_{\Sigma}}$-bimodule $\hom_{\C T_{\Sigma}}^*(\mathbb Z(-)\otimes_{\mathbb Z\C T_{\Sigma}}B_{n}(\mathbb Z\C T_{\Sigma})\otimes_{\mathbb Z\C T_{\Sigma}}\C T_{\Sigma},-))$ sends $M$ and $N$ to 
	\[\prod_{X_1,\dots,X_n}\hom^*_{\operatorname{Set}}(M(X_0)\times\C T_{\Sigma}(X_1,X_{0})\times\cdots\times\C T_{\Sigma}(X_n,X_{n-1}),N(X_n)),\]
	hence, it is a product of $\mathbb Z\modulesfp{\C T_{\Sigma}}$-bimodules $D_{X,Y}$ of the form $D_{X,Y}(M,N)=\hom_{\operatorname{Set}}^*(M(X),N(Y))$, where $X$ and $Y$ are fixed objects in $\C T_{\Sigma}$. The cohomology of $\modulesfp{\C T_{\Sigma}}$ with coefficients in such a $D_{X,Y}$ is concentrated in degree $0$ by the graded version of \cite[Lemma 3.9]{jibladze_cohomology_1991} (which actually follows from the ungraded original version via the graded-ungraded long exact sequence). 	Hence, the inclusion of the $0$-dimensional horizontal cohomology in $C^{\star,\bullet}$ is a quasi-isomorphism (with the total complex). This $0$-dimensional horizontal cohomology is the end of the cochain complex of $\mathbb Z\modulesfp{\C T_{\Sigma}}$-bimodules $\hom_{\C T_{\Sigma}}^*(\mathbb Z(-)\otimes_{\mathbb Z\C T_{\Sigma}}B_{\bullet}(\mathbb Z\C T_{\Sigma})\otimes_{\mathbb Z\C T_{\Sigma}}\C T_{\Sigma},-))$.
	By the extension-restriction of scalars adjunction, this complex coincides with $\hom_{\mathbb Z\C T_{\Sigma}}^*(\mathbb Z(-)\otimes_{\mathbb Z\C T_{\Sigma}}B_{\bullet}(\mathbb Z\C T_{\Sigma}),-))$. The end is, dimensionwise, the graded abelian group of natural transformations from the source to the target regarded as graded functors $\modulesfp{\C T_{\Sigma}}\rightarrow \modules{\C T_{\Sigma}}$. The source preserves colimits, and $\C T_\Sigma\subset\modulesfp{\C T_\Sigma}$ is the inclusion of a dense subcategory \cite[\S5.1]{kelly_basic_2005}, hence the source is the left Kan extension of its restriction along $\C T_\Sigma\subset\modulesfp{\C T_\Sigma}$ \cite[Theorem 5.29]{kelly_basic_2005}, so the end can be computed by restricting to $\C T_\Sigma$. The latter end is the complex $\hom_{\mathbb Z\C T_{\Sigma}^{\env}}^*(B_\bullet(\mathbb Z\C T_{\Sigma}),\C T_{\Sigma})$, whose cohomology is the claimed target of the spectral sequence, hence we are done. An explicit quasi-isomorphism $\xi\colon \hom_{\mathbb Z\C T_{\Sigma}^{\env}}^*(B_\bullet(\mathbb Z\C T_{\Sigma}),\C T_{\Sigma})\hookrightarrow C^{\star,\bullet}$ is defined as in \eqref{quasi_iso}.
\end{proof}

Now we can state the analogue of Theorem \ref{theorem_edge_morphism}.

\begin{theorem}\label{theorem_edge_morphism_top}
	Let $\C T$ be an idempotent complete triangulated category with suspension $\Sigma$ and a topological enhancement. The edge morphism 
	\[\hbw{3,-1}{\C T_{\Sigma}, \C T_{\Sigma}}\To \hbw{0,-1}{\modulesfp{\C T_{\Sigma}},\ext_{\C T_{\Sigma}}^{3,\ast}}
	\]
	of the spectral sequence in Proposition \ref{spectral_sequence_top} takes the universal Toda bracket of the enhancement to the Toda bracket of the triangulated structure.
\end{theorem}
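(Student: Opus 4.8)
The plan is to carry the argument of Theorem \ref{theorem_edge_morphism} over to the non-additive setting, isolating the two places where the situations genuinely differ. First, since the comparison morphism is an isomorphism in cohomological degree $0$, we have $\hbw{0,-1}{\modulesfp{\C T_{\Sigma}},\ext_{\C T_{\Sigma}}^{3,*}}\cong\hh{0,-1}{\modulesfp{\C T_{\Sigma}},\ext_{\C T_{\Sigma}}^{3,*}}$, so Theorem \ref{Toda_brackets_and_cohomology} still identifies the target of the edge morphism with the group $TB^s(\C T,\Sigma)$ of stable Toda brackets. In particular both the image of $\langle\C M\rangle$ along the edge morphism and the Toda bracket of the triangulated structure of $\C T$ are stable Toda brackets, so it suffices to prove that the two operations agree on every chain.

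Fix a cocycle $m_3\in\hc{3,-1}{\mathbb Z\C T_{\Sigma},\C T_{\Sigma}}$ representing the universal Toda bracket $\langle\C M\rangle$, a chain $X\st{f}\To Y\st{g}\To Z\st{h}\To T$ with $gf=0$ and $hg=0$, and write $\langle-,-,-\rangle_{\mathrm e}$ for the Toda bracket associated via Theorem \ref{Toda_brackets_and_cohomology} with the edge-morphism image of $\langle\C M\rangle$. The first step is to show that
\[m_3(h,g,f)\in\langle h,g,f\rangle_{\mathrm e}.\]
This is established exactly as in the proof of Theorem \ref{theorem_edge_morphism}: put $M=\coker\C T(-,h)$ in $\modulesfp{\C T_{\Sigma}}$, pick a projective resolution $\C T(-,U_*)$ of $M$ together with the comparison chain map from the complex $\C T(-,X)\to\C T(-,Y)\to\C T(-,Z)\to\C T(-,T)$ inducing the identity on $H_0$, and then unwind the chain of isomorphisms in the proof of Theorem \ref{Toda_brackets_and_cohomology} against the explicit quasi-isomorphism $\xi$ of Proposition \ref{spectral_sequence_top}, which is again given by formula \eqref{quasi_iso}. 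The one new point is that $m_3$ need not be multilinear; but the formula defining $\xi(m_3)$ is linear only in the module slot anyway, and the computation only uses that $\mathbb Z(M)\otimes_{\mathbb Z\C T_{\Sigma}}B_\bullet(\mathbb Z\C T_{\Sigma})\otimes_{\mathbb Z\C T_{\Sigma}}\C T_{\Sigma}$ is a projective resolution computing $\ext_{\C T_{\Sigma}}^{n,*}$, which is precisely what was proved in Proposition \ref{spectral_sequence_top}. Hence the comparison with $\C T(-,U_*)$ proceeds verbatim and yields the displayed containment, with $\xi(m_3)(M)$ representing the value at $M$ of the edge-morphism image of $\langle\C M\rangle$.

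On the other hand, by the theory of universal Toda brackets for stable model categories \cite{baues_cohomology_1989,baues_homotopy_2007}, the same cocycle $m_3$ satisfies $m_3(h,g,f)\in\langle h,g,f\rangle$ for the homotopically defined Toda bracket of $\operatorname{Ho}\C M$, which is the Toda bracket of the triangulated structure of $\C T$. Now $\langle h,g,f\rangle_{\mathrm e}$ and $\langle h,g,f\rangle$ are, by Definition \ref{Toda_bracket}, cosets in $\C T(X,\Sigma^{-1}T)$ of one and the same subgroup $(\Sigma^{-1}h)\cdot\C T(X,\Sigma^{-1}Z)+\C T(Y,\Sigma^{-1}T)\cdot f$, and they share the element $m_3(h,g,f)$; since cosets of a subgroup are either equal or disjoint, the two coincide. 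As the chain was arbitrary, the edge morphism sends $\langle\C M\rangle$ to the Toda bracket of the triangulated structure.

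The main obstacle is the explicit computation establishing the displayed containment: carrying $\langle\C M\rangle$ through the quasi-isomorphism $\xi$ and the string of isomorphisms of Theorem \ref{Toda_brackets_and_cohomology}, and checking that the signs together with the $\Sigma$-twists created by the degree $-1$ shift inherent to working inside $\C T_{\Sigma}$ conspire to produce precisely $m_3(h,g,f)$, and not a rescaling or a shift of it. Once that bookkeeping is in place, the rest is formal and identical to the additive case.
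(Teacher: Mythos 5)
Your proposal is correct and follows essentially the same route as the paper, which proves this theorem by repeating the argument of Theorem \ref{theorem_edge_morphism} verbatim with Proposition \ref{spectral_sequence_top} in place of Proposition \ref{spectral_sequence}, the role of \cite{bondal_enhanced_1991} now played by the fact from \cite{baues_cohomology_1989} that the universal Toda bracket represents the homotopically defined Toda brackets of $\operatorname{Ho}\C M$. Your added observations (the degree-$0$ comparison isomorphism identifying the target with stable Toda brackets, the harmlessness of the non-multilinearity of $m_3$, and the explicit coset argument) are exactly the points the paper leaves implicit.
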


The proof is exactly the same as that of Theorem \ref{theorem_edge_morphism}, replacing the reference to Proposition \ref{spectral_sequence} with Proposition \ref{spectral_sequence_top}.

Despite this section's non-additive cohomology is based on cochains which are not multilinear, in certain cases we can compute it using cochains which at least vanish when evaluated at zero maps. More precisely, let $\C C$ be a graded category with a zero object $0$. A $\mathbb Z\C C$-bimodule $M$ is \emph{zero-trivial} if it vanishes when evaluated at the zero object of $\C C$ at any slot, $M(0,-)=0=M(-,0)$. If $\C C$ is additive, any $\C C$-bimodule regarded as a $\mathbb Z\C C$-bimodule is zero-trivial. A cochain $\varphi\in \hc{\star, \ast}{\mathbb Z\C C,M}$ is \emph{zero-normalized} if it vanishes whenever we put a trivial morphism in one of the slots $\varphi(\dots,0,\dots)=0$. The inclusion of the subcomplex $\hz{\star, \ast}{\mathbb Z\C C,M}\subset \hc{\star, \ast}{\mathbb Z\C C,M}$ consisting of zero-normalized cochains is a quasi-isomorphism when $M$ is zero-trivial, actually a chain homotopy  equivalence, compare \cite[Theorem 1.10 and Appendix B]{baues_cohomology_1989}. The subcomplex of normalized cochains is better explained as follows. When $\C C$ has a zero object we can construct its \emph{zero-normalized linearization} $\breve{\mathbb Z}\C C$, with the same objects as $\C C$ and where $(\breve{\mathbb Z}\C C)(X,Y)$ is obtained from $\mathbb Z\C C(X,Y)$ by quotienting out the zero maps $0\in\C C(X,Y)$. There is an obvious projection linear functor $\mathbb Z\C C\rightarrow \breve{\mathbb Z}\C C$ which is the identity on objects. A zero-trivial $\mathbb Z\C C$-bimodule is the same as a $\breve{\mathbb Z}\C C$-bimodule $M$, and $\hz{\star, \ast}{\mathbb Z\C C,M}=\hc{\star, \ast}{\breve{\mathbb Z}\C C,M}$. The inclusion of zero-normalized cochains is induced by the projection $\mathbb Z\C C\rightarrow \breve{\mathbb Z}\C C$.

Given a graded linear category $\C C$ such that $\modulesfp{\C C}$ is Frobenius abelian. The same formula as in Definition \ref{kappa}, using now zero-normalized cochains, yields graded abelian group morphisms, $p\geq 1$, $q\in\mathbb Z$,
\[\kappa \colon \hbw{p+1,*}{\modulesfp{\C C },\widehat{\ext}_{\C C}^{q,*}}
\To
\hbw{0,*}{\modulesfp{\C C },\widehat{\ext}_{\C C}^{p+q,*}}.\]
The use of zero-normalized cochains ensures that the definition does not depend on the choice of representing cocycles.

The analogue of Proposition \ref{Heller_criterion_by_differential} holds in our current non-additive setting.

\begin{proposition}\label{Heller_criterion_by_differential_top}
	If $\C T$ is a small ungraded additive category such that $\modulesfp{\C T}$ is Frobenius abelian and $\Sigma\colon \C T\rightarrow\C T$ is an automorphism, then the	 set of Puppe triangulated structures on $\C T$ with suspension functor $\Sigma$ is in bijection with the units of the bigraded ring $\hbw{0,*}{\modulesfp{\C T_\Sigma},\widehat{\ext}_{\C T_{\Sigma}}^{\bullet,*}}$ lying in the kernel of the composite
	\begin{center}
		\begin{tikzcd}
		\hbw{0,-1}{\modulesfp{\C T_\Sigma},\ext_{\C T_{\Sigma}}^{3,*}}
		\arrow[d,"d_2"]\\
		\hbw{2,-1}{\modulesfp{\C T_\Sigma},\ext_{\C T_{\Sigma}}^{2,*}}
		\arrow[d,"\kappa"]\\
		\hbw{0,-1}{\modulesfp{\C T_\Sigma},\ext_{\C T_{\Sigma}}^{3,*}}
		\end{tikzcd}
	\end{center}
	where $d_2$ is a second differential in the spectral sequence of Proposition \ref{spectral_sequence_top}.
\end{proposition}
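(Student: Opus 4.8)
The plan is to run the proof of Proposition~\ref{Heller_criterion_by_differential} essentially verbatim, tracking only two changes: the spectral sequence is now the non-additive one of Proposition~\ref{spectral_sequence_top}, and $\kappa$ is the operator defined just before this statement via zero-normalized cochains. First I would reduce to a statement about cohomology operations. Since the comparison morphism $\hh{\star,*}{\C C,M}\to\hbw{\star,*}{\C C,M}$ is an isomorphism for $\star=0$, we may replace $HH^{0}$ by $H^{0}$ throughout Section~\ref{heller_section}; in particular Corollary~\ref{Heller_graded} already identifies the set of Puppe triangulated structures on $(\C T,\Sigma)$ with the units of $\hbw{0,*}{\modulesfp{\C T_\Sigma},\widehat{\ext}_{\C T_{\Sigma}}^{\bullet,*}}$ lying in $\hbw{0,-1}{\modulesfp{\C T_\Sigma},\widehat{\ext}_{\C T_{\Sigma}}^{3,*}}$ whose image under the map $i^{*}$ of the long exact sequence \eqref{exact_cohomology_sequence_top} satisfies Heller's anticommutativity condition, i.e.~lies in the kernel of $\operatorname{id}-\tau_{*}^{-1}S^{*}$ on $\hh{0}{\modulesst{\C T},\homst_{\C T}(\Sigma,S^{3})}$ as in \eqref{anticommutativity_cohomological}. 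Hence it suffices to show that the square
\begin{center}
\begin{tikzcd}[column sep=50]
\hbw{0,-1}{\modulesfp{\C T_\Sigma},\ext_{\C T_{\Sigma}}^{3,*}}\arrow[r,"\kappa d_2"]\arrow[d, hook,"i^*"']&\hbw{0,-1}{\modulesfp{\C T_\Sigma},\ext_{\C T_{\Sigma}}^{3,*}}\arrow[d, hook,"i^*"]\\
\hbw{0,-1}{\modulesfp{\C T},\ext_{\C T_{\Sigma}}^{3,*}}\arrow[d,"\cong"']&\hbw{0,-1}{\modulesfp{\C T},\ext_{\C T_{\Sigma}}^{3,*}}\arrow[d,"\cong"]\\
\hh{0}{\modulesst{\C T},\homst_{\C T}(\Sigma,S^{3})}\arrow[r,"\operatorname{id}-\tau_*^{-1}S^*"]&\hh{0}{\modulesst{\C T},\homst_{\C T}(\Sigma,S^{3})}
\end{tikzcd}
\end{center}
commutes up to sign, exactly as in the additive case, the lower isomorphisms being those of Section~\ref{heller_section} (again using $H^{0}=HH^{0}$).

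The core of the argument is then the cochain-level identity $\kappa d_2(\varphi)(M)=-\varphi(M)+\tau^{-1}_*S^*(\varphi)(M)$, which I would establish by transcribing the computation in the proof of Proposition~\ref{Heller_criterion_by_differential}: represent $\varphi(M)$ by a cochain $\tilde\varphi(M)$ on the small projective resolution $\C T_\Sigma(-,X_*)$ of $M$, extract $\psi(g)$ from the cocycle condition \eqref{equation_phi_psi}, read off the representative \eqref{k_d2_phi} of $\kappa_1 d_2(\varphi)(M)$, and apply the isomorphism \eqref{iso_S} computing $\kappa_2$. Nothing there uses $k$-linearity of $\tilde\varphi$, $\psi$ or the resulting $2$-cocycle in the morphism slots: the spectral sequence of Proposition~\ref{spectral_sequence_top} is again that of a bicomplex with differentials \eqref{differentials_bicomplex}, the cochains are now maps of graded sets instead of multilinear maps, but $\psi$ is only ever evaluated on the structural morphisms $j_M$, $p_M$, the $d_i$ and identities, and the formulas for $d_2$ and for the decomposition $\kappa=\kappa_2\kappa_1$ involve only composition and the bimodule actions. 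So the whole calculation carries over word for word.

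The one genuinely new point — and the step I expect to require the most care — is compatibility with the zero-normalization built into $\kappa$. To apply $\kappa$ to $d_2(\varphi)$ we must represent it by a zero-normalized $2$-cocycle; this is possible because the coefficient bimodules $\widehat{\ext}^{q,*}_{\C T_{\Sigma}}$, being additive $\C T_\Sigma$-bimodules, are zero-trivial, so the inclusion of zero-normalized cochains is a chain homotopy equivalence, as recalled just before the statement. Concretely, I would choose $\psi$ with $\psi(0)=0$ — possible for the same reason — and check that the explicit representative $(-1)^{\abs{g_1}}g_1\psi(g_2)-\psi(g_1 g_2)+\psi(g_1)(g_2\otimes\operatorname{id})$ of $d_2(\varphi)$ is then already zero-normalized: the first two summands vanish when $g_1$ or $g_2$ is zero, and the third vanishes when $g_2\otimes\operatorname{id}$ is the zero map of modules. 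Feeding this representative through the formula for $\kappa$ reproduces the additive computation and yields $\kappa d_2(\varphi)(M)=-\varphi(M)+\tau^{-1}_*S^*(\varphi)(M)$, so the square commutes up to sign and the proof concludes as before.
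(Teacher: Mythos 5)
Your overall strategy is the paper's: transport the proof of Proposition \ref{Heller_criterion_by_differential} to the non-additive setting, using $HH^{0}\cong H^{0}$ to keep the identifications of Section \ref{heller_section} and the square with $\operatorname{id}-\tau_*^{-1}S^*$. But the justification you give for the transfer contains a genuine gap, and it sits exactly at the point the argument turns on. You claim the additive computation never uses linearity of $\tilde\varphi(M)$, $\psi(g)$ in the bar-direction slots because ``$\psi$ is only ever evaluated on the structural morphisms''. That is not true: every application of the cocycle identity \eqref{equation_phi_psi} along the small resolution $\C T_\Sigma(-,X_*)$ produces terms in which some slot carries a vanishing composite ($d_0\cdot d_1=0$, $d_id_{i+1}=0$, $j_Md_0\cdot d_1=0$), and the additive proof discards these terms precisely by multilinearity --- for instance when checking that the two summands of \eqref{k_d2_phi} die after right multiplication by $d_3$, and in the identities relating $\psi(j_M)$, $\psi(p_M)$ to $\tilde\varphi(M)$, $\tilde\varphi(SM)$; even the comparison map $\C T_\Sigma(-,X_*)\to(\text{bar-type resolution})$ given by $\pm\, d_0\otimes\cdots\otimes d_n\otimes\operatorname{id}_{X_n}$ is a chain map only because such terms vanish. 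In the non-additive bicomplex the resolution is $\mathbb Z(M)\otimes_{\mathbb Z\C T_\Sigma}B_\bullet(\mathbb Z\C T_\Sigma)\otimes_{\mathbb Z\C T_\Sigma}\C T_\Sigma$, where the zero morphism and the zero element of $M(X)$ are \emph{basis elements}: $\mathbb Z(0)$ is not the zero map, and a cochain of graded sets need not vanish on tuples containing $0$. So these terms do not disappear automatically; in particular your argument that the third summand $\psi(g_1)(g_2\otimes\operatorname{id})$ of the $d_2$-representative is zero-normalized ``because $g_2\otimes\operatorname{id}$ is the zero map'' fails for the forceful linearization, since for $g_2=0$ the induced map sends basis elements to the basis element $[0]$, not to $0$.

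What is missing --- and what the paper's proof supplies --- is zero-normalization in the bar direction as well, not only in the Hochschild slots needed to feed $\kappa$: one must replace $\mathbb Z\C T_\Sigma$ by $\breve{\mathbb Z}\C T_\Sigma$ and the forceful linearization $\mathbb Z(M)$ by a zero-normalized forceful linearization $\breve{\mathbb Z}(M)$ (quotienting out the basis elements $0\in M(X)$), and verify that the resulting complex still resolves $M$, so that all of $\tilde\varphi(M)$, $\psi(g)$ and the representative of $d_2(\varphi)$ may be taken to vanish on tuples with a zero entry in \emph{every} slot. With that in place the additive computation does carry over verbatim, as you intend; your choice $\psi(0)=0$ addresses only the well-definedness of $\kappa$ and is not sufficient on its own.
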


The proof of this result is essentially the same as the proof of Proposition \ref{Heller_criterion_by_differential}. We only need to replace the resolution $M\otimes_{\C T_\Sigma}B_\bullet(\C T_\Sigma)$ of $M$ used previously with $\mathbb Z (M)\otimes_{\mathbb Z\C T_{\Sigma}}B_{\bullet}(\mathbb Z\C T_{\Sigma})\otimes_{\mathbb Z\C T_{\Sigma}}\C T_{\Sigma}$, that we use in the proof of Proposition \ref{spectral_sequence_top}. We did not really use in an essential way the multilinearity of cocycles therein. We only used that cocycles vanish when one variable is the trivial morphism. Hence, we must restrict to zero-normalized cochains, i.e.~we must actually use $\breve{\mathbb Z}\C C$ instead of $\mathbb Z\C C$ and we must define a zero-normalized forceful linearization $\breve{\mathbb Z}(M)$ of right $\C T_\Sigma$-modules $M$ where $\breve{\mathbb Z}(M)(X)$
is obtained from $\mathbb Z(M(X))$ by quotienting out zeroes $0\in M(X)$.

Theorem \ref{octahedral_theorem} (the sufficient condition for the octahedral axiom) also holds true for the spectral sequence in Proposition \ref{spectral_sequence_top}, so triangulated categories with a universal Toda bracket satisfy the octahedral axiom (this was already checked in \cite{baues_cohomologically_2008}). 

\begin{theorem}\label{octahedral_theorem_top}
	If $\C T$ is a small ungraded idempotent complete additive category such that $\modulesfp{\C T}$ is Frobenius abelian, $\Sigma\colon \C T\rightarrow\C T$ is an automorphism, and $\varphi\in \hbw{0,-1}{\modulesfp{\C T_\Sigma},\ext_{\C T_{\Sigma}}^{3,*}}$ corresponds to a Puppe triangulated structure on $\C T$ with suspension $\Sigma$ such that $d_2(\varphi)=0$, then this Puppe triangulated structure satisfies the octahedral axiom.
\end{theorem}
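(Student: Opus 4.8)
The plan is to mimic, essentially line by line, the proof of Theorem~\ref{octahedral_theorem}, substituting the homological input appropriate to the non-additive setting. The steps that only concern the triangulated category $\C T$ and the abelian category $\modulesfp{\C T}$ carry over verbatim: I would first invoke Neeman's mapping cone criterion \cite{neeman_triangulated_2001} exactly as before, then reproduce the argument showing that the property ``a map between the bases of two exact triangles extends to a triangle morphism with exact mapping cone'' depends only on the induced map $\ker\C T(-,f)\to\ker\C T(-,f')$ in $\modulesst{\C T}$, and finally reduce to the concrete task of producing, for an arbitrary short exact sequence $A'\hookrightarrow B\twoheadrightarrow SA$ in $\modulesfp{\C T}$, a representative $B\hookrightarrow P_2\to P_1\to P_0\twoheadrightarrow \Sigma B$ of $\varphi(\Sigma B)$ fitting into the displayed diagram of short exact columns over the chosen representatives of $\varphi(\Sigma A')$ and $\varphi(\Sigma SA)$. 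None of this uses the multiplicative structure of cochains, so it is unchanged.

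The one place where the additive proof manipulates cochains explicitly is the construction of the push-out extension $P_M$ representing $\varphi(M)$, the comparison maps $P_g$, and the correction term $\delta$ built from $\zeta(g\otimes f)$ (which exists because $d_2(\varphi)=0$). Here I would replace the bar resolution $M\otimes_{\C T_\Sigma}B_\bullet(\C T_\Sigma)$ by the projective resolution $\breve{\mathbb Z}(M)\otimes_{\breve{\mathbb Z}\C T_\Sigma}B_\bullet(\breve{\mathbb Z}\C T_\Sigma)\otimes_{\breve{\mathbb Z}\C T_\Sigma}\C T_\Sigma$ of $M$ used in Proposition~\ref{spectral_sequence_top}, via the zero-normalized forceful linearization $\breve{\mathbb Z}(-)$ described after Proposition~\ref{Heller_criterion_by_differential_top}, and work throughout with zero-normalized cochains. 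The cocycle identity for $\varphi$ still produces a $\psi(g)$ of degree $-1$ with $\psi(g)(d\otimes\operatorname{id})=(-1)^{\abs{g}}g\,\tilde\varphi(M)-\tilde\varphi(N)(g\otimes\operatorname{id})$, bilinearity being replaced by additivity in the slot carrying $g$ together with vanishing when $g=0$; the vanishing $d_2(\varphi)=0$ still yields $\zeta(g\otimes f)$ with the same formula; and the push-outs and the maps $P_M$, $P_g$, $P_f+\delta$ are then formed exactly as before. The snake-lemma and five-lemma arguments establishing that the middle column is short exact are purely diagrammatic in $\modulesfp{\C T_\Sigma}$ and transfer without change.

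The main point to verify --- and the only real obstacle --- is that the cochain-level computations in the additive proof never relied on multilinearity, but only on the two properties a zero-normalized cocycle enjoys: additivity in the slot holding the module morphism (needed so that $\psi$ and $\zeta$ can be chosen additively in their respective arguments, hence compatibly with direct sums), and vanishing whenever a slot is filled by a zero morphism (which is exactly what lets us assume $\tilde\varphi(M)=0$ for projective--injective $M$ and, crucially, guarantees $P_{gf}=P_0=0$, the identity behind $P_g(P_f+\delta)=P_{gf}$ that drives the snake-lemma step). Since the resolution from Proposition~\ref{spectral_sequence_top} is again a projective resolution of $M$ of the same ``bar-type'' shape, and since the horizontal and vertical differentials of the relevant bicomplex are literally \eqref{differentials_bicomplex}, every formula in the proof of Theorem~\ref{octahedral_theorem} has a verbatim counterpart here. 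I would therefore present the topological proof as a short paragraph pointing to Theorem~\ref{octahedral_theorem}, flagging only the replacement of the resolution and the passage to zero-normalized cochains, rather than reproducing all the diagrams.
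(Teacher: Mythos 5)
Your proposal is correct and takes essentially the same route as the paper, whose proof of Theorem \ref{octahedral_theorem_top} consists precisely of rerunning the proof of Theorem \ref{octahedral_theorem} with the resolution $\breve{\mathbb Z}(M)\otimes_{\breve{\mathbb Z}\C T_\Sigma}B_\bullet(\breve{\mathbb Z}\C T_\Sigma)\otimes_{\breve{\mathbb Z}\C T_\Sigma}\C T_\Sigma$ in place of $M\otimes_{\C T_\Sigma}B_\bullet(\C T_\Sigma)$ and with zero-normalized cochains, exactly as you describe. The only slight inaccuracy is your insistence on additivity of $\psi$ and $\zeta$ in the module-morphism slots: this is neither available in general in the non-additive bicomplex nor actually used anywhere in the argument, since (as the paper itself notes) the proof only needs cochains to vanish when a slot carries a zero morphism, which is what gives $P_{gf}=P_0=0$.
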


Again, the proof of this theorem is not much different to its linear version over a field, up to the previous replacement of resolutions.

The topological analogue of DG- or $A$-infinity categories are \emph{spectral categories}, i.e.~categories enriched in any closed symmetric monoidal category of spectra, such as symmetric spectra. Given a spectral category $\C C$, we can take stable homotopy groups on morphism spectra and form a graded linear category $\pi_*\C C$. In the same way as a DG-category induces Massey products in cohomology, a spectral category $\C C$ induces Toda brackets in stable homotopy, hence if $\pi_*\C C \simeq \C T_\Sigma$ is weakly stable then $(\C T,\Sigma)$ is endowed with a stable Toda bracket. We say that $\C C$ is \emph{pre-triangulated} if in the previous circumstances the stable Toda bracket induces a triangulated structure on $(\C T,\Sigma)$. This stable Toda bracket comes from a universal Toda bracket. Indeed, we can consider the stable model category $\modules{\C C}$ of right $\C C$-modules, and the Yoneda inclusion $\C C\subset\modules{\C C}$ gives rise to a full inclusion $\pi_0\C C\subset\operatorname{Ho}\modules{\C C}$. If $\Sigma$ is the suspension functor in $\operatorname{Ho}\modules{\C C}$ and $\pi_*\C C$ is weakly stable then $\pi_*\C C=(\pi_0\C C)_{\Sigma}$, so it inherits the universal Toda bracket. Up to idempotent completion, our pre-triangulated spectral categories coincide with Tabuada's triangulated spectral categories \cite[Definition 5.1]{tabuada_matrix_2010}. The topological analogue of Corollary \ref{pretriangulated} is the following result.

\begin{corollary}\label{pretriangulated_top}
	Let $\C C$ be a spectral category such that idempotents in $\pi_0\C C$ split. The following statements are equivalent:
	\begin{itemize}
		\item $\C C$ is pre-triangulated.
		\item $\pi_*\C C$ is weakly stable, $\modulesfp{\pi_0\C C}$ is Frobenius abelian, and the image of the universal Massey product along the edge morphism \[\hbw{3,-1}{\pi_*\C C, \pi_*\C C}\longrightarrow \hbw{0,-1}{\modulesfp{\pi_*\C C},\ext_{\pi_*\C C}^{3,\ast}}\] in Theorem \ref{theorem_edge_morphism_top} is a unit in the bigraded algebra \[\hbw{0,*}{\modulesfp{\pi_*\C C},\widehat{\ext}_{\pi_*\C C}^{\star,\ast}}.\]
	\end{itemize}
\end{corollary}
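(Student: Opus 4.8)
The plan is to transplant the proof of Corollary \ref{pretriangulated} to the non-additive setting, using Proposition \ref{Heller_criterion_by_differential_top}, Theorem \ref{octahedral_theorem_top} and Theorem \ref{theorem_edge_morphism_top} in place of their linear counterparts. Write $\C T=\pi_0\C C$ and let $\Sigma$ be the suspension functor of $\operatorname{Ho}\modules{\C C}$. Recall from the discussion preceding the statement that, as soon as $\pi_*\C C$ is weakly stable, one has $\pi_*\C C=\C T_{\Sigma}$, the Yoneda inclusion $\C T\subset\operatorname{Ho}\modules{\C C}$ is a topological enhancement of $\C T$, and $\pi_*\C C$ inherits a universal Toda bracket $\{m_3\}=\langle\modules{\C C}\rangle\in\hbw{3,-1}{\C T_{\Sigma},\C T_{\Sigma}}$. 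The crucial identification is that the stable Toda bracket which $\C C$ induces on $(\C T,\Sigma)$ is exactly the image $\varphi$ of $\{m_3\}$ under the edge morphism, regarded inside $\hbw{0,-1}{\modulesfp{\C T_{\Sigma}},\ext_{\C T_{\Sigma}}^{3,\ast}}$; this is Baues's theorem that a universal Toda bracket determines the homotopically defined ones, combined with the computation carried out in the proof of Theorem \ref{theorem_edge_morphism_top}. In particular $\varphi$ lies in the image of the edge morphism, hence is a permanent cycle of the spectral sequence of Proposition \ref{spectral_sequence_top}; therefore $d_2(\varphi)=0$ and, a fortiori, $\kappa d_2(\varphi)=0$.

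Suppose first that $\C C$ is pre-triangulated. By definition $\pi_*\C C$ is weakly stable, and the stable Toda bracket of $\C C$, which by the previous paragraph is $\varphi$, is the Toda bracket of a triangulated structure on $(\C T,\Sigma)$. Since $\C T$ then carries a triangulated structure it is additive, so Freyd's theorem forces $\modulesfp{\C T}$ to be Frobenius abelian. Now Proposition \ref{Heller_criterion_by_differential_top} applies, and because $\varphi$ classifies a Puppe triangulated structure it must be a unit of the bigraded ring $\hbw{0,*}{\modulesfp{\C T_{\Sigma}},\widehat{\ext}_{\C T_{\Sigma}}^{\bullet,*}}$ lying in the kernel of $\kappa d_2$; in particular it is a unit, which is the content of the second statement.

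Conversely, assume $\pi_*\C C$ is weakly stable, $\modulesfp{\C T}$ is Frobenius abelian, and $\varphi$ is a unit. By the first paragraph $\kappa d_2(\varphi)=0$, so $\varphi$ is a unit lying in the kernel of $\kappa d_2$, and Proposition \ref{Heller_criterion_by_differential_top} yields a Puppe triangulated structure on $(\C T,\Sigma)$ classified by $\varphi$. Again by the first paragraph $d_2(\varphi)=0$, so Theorem \ref{octahedral_theorem_top} promotes this to a genuine (Verdier) triangulated structure; its Toda bracket, under the bijection of Theorem \ref{Toda_brackets_and_cohomology} (valid for non-additive cohomology since that coincides with Hochschild cohomology in degree zero), is $\varphi$. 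But $\varphi$ is also the stable Toda bracket of $\C C$, so that stable Toda bracket induces a triangulated structure, i.e.~$\C C$ is pre-triangulated.

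The main obstacle is the identification asserted in the first paragraph: that the edge-morphism image of the universal Toda bracket of the spectral enhancement $\C C$ is literally the abstract stable Toda bracket of $\C C$ in the sense of Definition \ref{Toda_bracket}. In the converse direction this has to be invoked before $\pi_0\C C$ is known to be triangulated, so Theorem \ref{theorem_edge_morphism_top} cannot be quoted verbatim; instead one re-reads its proof, which only uses that a representing cocycle $m_3$ of the universal Toda bracket satisfies $m_3(h,g,f)\in\langle h,g,f\rangle$ for the homotopically defined Toda bracket (Baues), and that by the explicit cochain formulas from the proofs of Theorem \ref{Toda_brackets_and_cohomology} and Proposition \ref{spectral_sequence_top} the edge-image coset contains $m_3(h,g,f)$ as well, so the two cosets coincide. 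The remaining points (notably that $\pi_0\C C$ is additive, so that the cited topological Heller results genuinely apply) are routine.
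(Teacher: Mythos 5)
Your argument is correct and is exactly the route the paper intends: Corollary \ref{pretriangulated_top} is stated as the topological analogue of Corollary \ref{pretriangulated}, i.e.\ a direct consequence of Proposition \ref{Heller_criterion_by_differential_top} and Theorem \ref{octahedral_theorem_top}, combined with the identification (via Theorem \ref{theorem_edge_morphism_top} and Baues's results) of the edge-morphism image of the universal Toda bracket with the stable Toda bracket of $\C C$. Your write-up merely makes explicit the permanent-cycle observation $d_2(\varphi)=0$ and the bracket identification that the paper leaves implicit, so it matches the paper's proof in substance.
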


\section{Example of non-vanishing obstructions}\label{example}

We here illustrate with an example that the obstructions need not vanish. For this, we need a triangulated category without enhancements. There are few known examples, essentially those in  \cite{muro_triangulated_2007}, their non-commutative analogues \cite{dimitrova_triangulated_2009}, and a new recent family of examples over the rationals \cite{rizzardo_k-linear_2018}. These new examples do not have an $A_\infty$-enhancement but they do have an $A_6$-enhancement, so they have a universal Massey product, and therefore the first obstructions described in this paper vanish. There are also some triangulated categories defined over a field, the non-standard finite $1$-Calabi-Yau triangulated categories \cite{amiot_structure_2007}, for which no enhancements are known, but their triangulated structures are also defined from universal Massey products.

We concentrate in the simplest example considered in \cite{muro_triangulated_2007}, the category $\C T=\free{\mathbb Z/4}$ of finitely generated free $\mathbb Z/4$-modules with the identity suspension functor $\Sigma=\operatorname{id}_{\C T}$. This category is idempotent complete since all projective $\mathbb Z/4$-modules are free. We showed that this category has a triangulated structure where
\[\mathbb Z/4\stackrel{2}\longrightarrow \mathbb Z/4\stackrel{2}\longrightarrow\mathbb Z/4\stackrel{2}\longrightarrow\mathbb Z/4\]
is an exact triangle. We will regard this category as a $\mathbb Z$-linear category and show that there must be a non-vanishing topological obstruction. Let us first place this triangulated structure within the abelian group of stable Toda brackets, see Theorem \ref{Toda_brackets_and_cohomology}. 

\begin{proposition}
	If $\C T=\free{\mathbb Z/4}$ and $\Sigma\colon\C T\rightarrow \C T$ is the identity functor, then the previous triangulated structure is the only existing triangulated structure on the pair $(\C T,\Sigma)$ and corresponds under the bijection in Corollary \ref{Heller_graded} to the non-trivial element in
	\[\hh{0,-1}{\modulesfp{\C T_\Sigma},\ext_{\C T_{\Sigma}}^{3,*}}\cong\mathbb Z/2.\]
\end{proposition}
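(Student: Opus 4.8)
The plan is to make everything explicit by hand, since $\C T=\free{\mathbb Z/4}$ is essentially the category of finitely generated projective $\mathbb Z/4$-modules. First I would record that $\modulesfp{\C T}$ is equivalent to the category of finite $\mathbb Z/4$-modules, which is Frobenius abelian because $\mathbb Z/4$ is self-injective and Artinian; its projective-injective objects are the free modules, and idempotents split in $\C T$ since projective $\mathbb Z/4$-modules are free, so Corollary \ref{Heller_graded} applies. Next I would identify the stable module category: by the structure theorem every finite $\mathbb Z/4$-module is a sum of copies of $\mathbb Z/4$ and $\mathbb Z/2$, so in $\modulesst{\C T}$ every object is a sum of copies of $\mathbb Z/2$; since $\operatorname{id}_{\mathbb Z/2}$ does not factor through $\mathbb Z/4$ one has $\homst_{\C T}(\mathbb Z/2,\mathbb Z/2)=\mathbb Z/2$, hence $\modulesst{\C T}$ is equivalent to the category of finite-dimensional $\mathbb F_2$-vector spaces. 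The exact sequence $\mathbb Z/2\hookrightarrow\mathbb Z/4\twoheadrightarrow\mathbb Z/2$ gives $S\mathbb Z/2\cong\mathbb Z/2$, so the cosyzygy functor $S$ is naturally isomorphic to the identity; and since $\Sigma=\operatorname{id}_{\C T}$, both the induced endofunctor of $\modulesst{\C T}$ and the comparison $\sigma\colon\Sigma S\cong S\Sigma$ are naturally isomorphic to the identity as well.

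Granted these identifications, uniqueness is immediate from Heller's classification recalled in Section \ref{heller_section}: triangulated structures on $(\C T,\Sigma)$ correspond to natural isomorphisms $\delta\colon\Sigma\cong S^{3}$ satisfying $(S\delta)\sigma+\delta S=0$, which here are the units of $\operatorname{End}(\operatorname{id}_{\modulesst{\C T}})\cong\mathbb F_2$ subject to $2\delta=0$. Over $\mathbb F_2$ the relation is vacuous and the only unit is $1$, so $(\C T,\Sigma)$ admits exactly one triangulated structure; since the one constructed in \cite{muro_triangulated_2007} is a triangulated structure, it must be this unique one.

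For the cohomology group I would compute, from the $2$-periodic resolution $\cdots\xrightarrow{\,2\,}\mathbb Z/4\xrightarrow{\,2\,}\mathbb Z/4\twoheadrightarrow\mathbb Z/2$, that $\ext^n_{\C T}(\mathbb Z/2,\mathbb Z/2)=\mathbb Z/2$ for all $n\ge 0$ while $\ext^n_{\C T}(\mathbb Z/2,\mathbb Z/4)=0$ for $n>0$. Hence the $\modulesfp{\C T}$-bimodule $\ext^3_{\C T}(-,\Sigma^{-1}-)=\ext^3_{\C T}(-,-)$ descends to $\modulesst{\C T}$, where it is the $\homst$-bifunctor of the category of finite-dimensional $\mathbb F_2$-vector spaces; its end is the centre $\mathbb F_2$, so $\hh{0}{\modulesfp{\C T},\ext_{\C T}^{3}(-,\Sigma^{-1})}\cong\mathbb Z/2$ (this is just the instance for $(\C T,\Sigma)$ of the isomorphism with $\hh{0}{\modulesst{\C T},\homst_{\C T}(\Sigma,S^{3})}$ from Section \ref{heller_section}). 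In the exact sequence \eqref{exact_cohomology_sequence} the self-map is $\operatorname{id}$ followed by a $\pm\operatorname{id}$ (because $\Sigma=\operatorname{id}$), hence it vanishes on a group of exponent $2$; therefore $i^*$ is an isomorphism and $\hh{0,-1}{\modulesfp{\C T_\Sigma},\ext_{\C T_{\Sigma}}^{3,*}}\cong\mathbb Z/2$, noting $\widehat{\ext}^{3}=\ext^{3}$. The unique triangulated structure then corresponds under Corollary \ref{Heller_graded} to the unique unit of the bigraded ring lying in this group, i.e.\ to the nonzero element; this class is realized precisely because the triangulated structure exists. I do not expect a genuine obstacle here: the only step requiring care is the bookkeeping of the several natural isomorphisms ($S\simeq\operatorname{id}$, $\sigma\simeq\operatorname{id}$, $\widehat{\ext}^{n,*}_{\C T_\Sigma}\cong(\widehat{\ext}^n_{\C T})_\Sigma$) and of the sign appearing in \eqref{exact_cohomology_sequence}, all of which are harmless because every relevant abelian group is $2$-torsion.
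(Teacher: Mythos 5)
Your proof is correct and takes essentially the same route as the paper: identify $\modulesst{\C T}$ with finite-dimensional $\mathbb{F}_2$-vector spaces with $S$ (and $\Sigma$, $\sigma$) naturally the identity, compute the relevant $HH^{0}$ as $\operatorname{End}(\operatorname{id})\cong\mathbb Z/2$, and use the sequence \eqref{exact_cohomology_sequence}, whose difference map becomes multiplication by $2$ and hence vanishes, to conclude that $i^*$ is an isomorphism. The only cosmetic difference is in the uniqueness step, where you count admissible Heller data $\delta\colon\Sigma\cong S^{3}$ directly (the anticommutativity condition being automatic in characteristic $2$), while the paper notes that any triangulated structure must correspond to a unit and the zero element cannot be one --- two equivalent phrasings of the same argument.
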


\begin{proof}
	Right modules over $\free{\mathbb Z/4}$ are the same as $\mathbb Z/4$-modules. The stable module category $\modulesst{\C T}=\modulesst{\mathbb Z/4}$ is the category $\modulesfp{\mathbb Z/2}$ of finite-dimensional $\mathbb Z/2$-vector spaces since any finitely generated $\mathbb Z/4$-module is a finite direct sum of copies of $\mathbb Z/4$ and $\mathbb Z/2$. Using the short exact sequence
	\[\mathbb Z/2\hookrightarrow\mathbb Z/4\twoheadrightarrow\mathbb Z/2 \]
	it is easy to see that the cosyzygy functor $S$ is the identity, as $\Sigma$.
	By Proposition \ref{Heller_ungraded}, 
	\[\begin{split}
	\hh{0}{\modulesfp{\C T},\ext_{\C T}^3(-,\Sigma^{-1})}&=\hh{0}{\modulesfp{\C T},\widehat{\ext}_{\C T_{\Sigma}}^{3,-1}}\\&\cong \hh{0}{\modulesst{\C T},\homst_{\C T}(\Sigma,S^{3})}
	\end{split}\] 
	is the group of natural transformations $\Sigma\rightarrow S^3$ in $\modulesst{\C T}$, i.e.~the endomorphisms of the identity functor in $\modulesfp{\mathbb Z/2}$, which is $\mathbb Z/2$. 
	
	Since $\Sigma$ is the identity, the bottom map in the exact sequence \eqref{exact_cohomology_sequence} is multiplication by $2$. We have just seen that the source (and target) of this map is isomorphic to $\mathbb Z/2$. Hence $i^*$	is an isomorphism
	\[\hh{0,-1}{\modulesfp{\C T_\Sigma},\ext_{\C T_{\Sigma}}^{3,*}}\cong \hh{0}{\modulesfp{\C T},\ext_{\C T}^3(-,\Sigma^{-1})}\]
	and all Toda brackets are stable, see Theorem \ref{Toda_brackets_and_cohomology}. 
	
	Any triangulated structure on $(\C T,\Sigma)$ must correspond to the non-trivial element since it must be a unit in $\hh{0,*}{\modulesfp{\C T_\Sigma},\widehat{\ext}_{\C T_{\Sigma}}^{\bullet,*}}$, which is non-trivial. In particular, the previous triangulated structure on this pair is unique.
\end{proof}

Our strategy to prove that one of the obstructions must be non-vanishing will be to show that the no-trivial element is not in the image of the edge morphism in Theorem \ref{theorem_edge_morphism_top},
\[\hbw{3,-1}{\C T_{\Sigma}, \C T_{\Sigma}}\To \hbw{0,-1}{\modulesfp{\C T_{\Sigma}},\ext_{\C T_{\Sigma}}^{3,\ast}}\cong\mathbb Z/2.
\]
This indeed guarantees that some obstriction is not zero, but it does not say which one. We could compute the obstructions explicitly but that would take much longer (computing spectral sequence differentials is difficult) and there would not be a clear benefit. The  computation goes through several steps, where we will use the following ungraded version of the spectral sequence in Proposition \ref{spectral_sequence_top}.

\begin{proposition}\label{spectral_sequence_ungraded}
	If $\C T$ is a small additive category such that $\modulesfp{\C T}$ is abelian, then there is a first quadrant cohomological spectral sequence 
	\[E_2^{p,q}=H^{p}(\modulesfp{\C T },\ext_{\C T }^{p})\Longrightarrow H^{p+q}(\C T ,\C T ).\]
\end{proposition}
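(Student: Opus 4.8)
The plan is to transcribe the proof of Proposition \ref{spectral_sequence_top} word for word, suppressing both the grading and the automorphism $\Sigma$ throughout. Concretely, I would associate the spectral sequence to the bicomplex of abelian groups
\[
C^{\star,\bullet}=\hom_{\mathbb Z\modulesfp{\C T}^{\env}}\bigl(B_{\star}(\mathbb Z\modulesfp{\C T}),\hom_{\C T}(\mathbb Z(-)\otimes_{\mathbb Z\C T}B_{\bullet}(\mathbb Z\C T)\otimes_{\mathbb Z\C T}\C T,-)\bigr),
\]
whose $(p,q)$-cochains are families of maps of sets
\[
\prod_{i=1}^{p}\hom_{\C T}(M_{i},M_{i-1})\times M_{p}(X_{0})\times\prod_{j=1}^{q}\C T(X_{j},X_{j-1})\To M_{0}(X_{q})
\]
indexed by sequences $M_{0},\dots,M_{p}$ in $\modulesfp{\C T}$ and $X_{0},\dots,X_{q}$ in $\C T$, with horizontal and vertical differentials given by the ungraded specialization of \eqref{differentials_bicomplex}. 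The spectral sequence of the statement is the first-vertical-then-horizontal one attached to $C^{\star,\bullet}$.

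For the $E_{2}$-page I would argue as in Proposition \ref{spectral_sequence_top}: for each finitely presented right $\C T$-module $M$ the complex $\mathbb Z(M)\otimes_{\mathbb Z\C T}B_{\bullet}(\mathbb Z\C T)\otimes_{\mathbb Z\C T}\C T$ of projective right $\C T$-modules is the standard complex computing the André--Quillen homology $H_{*}(M,y)$ of $M$ with coefficients in the Yoneda inclusion $y\colon\C T\hookrightarrow\modulesfp{\C T}$, in the \emph{catégorie avec modèles munis de coefficients} $\modulesfp{\C T}\supset\C T\stackrel{y}{\To}\modulesfp{\C T}$ of \cite[Chapitre I]{andre_methode_1967}. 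Since $y$ is the restriction of the identity (hence exact) endofunctor of $\modulesfp{\C T}$, \cite[Proposition 13.2]{andre_methode_1967} gives $H_{*}(M,y)=M$ concentrated in degree $0$, so that complex is a projective resolution of $M$ in $\modules{\C T}$; as $\mathbb Z\modulesfp{\C T}$ is locally free, $B_{\star}(\mathbb Z\modulesfp{\C T})$ is a genuine projective bimodule resolution, and the $E_{2}$-page is $H^{p}(\modulesfp{\C T},\ext_{\C T}^{q})$.

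For the target I would pass to the other spectral sequence of $C^{\star,\bullet}$. The bimodule $\hom_{\C T}(\mathbb Z(-)\otimes_{\mathbb Z\C T}B_{n}(\mathbb Z\C T)\otimes_{\mathbb Z\C T}\C T,-)$ is a product of bimodules $D_{X,Y}(M,N)=\hom_{\operatorname{Set}}(M(X),N(Y))$, and the cohomology of $\modulesfp{\C T}$ with coefficients in such a $D_{X,Y}$ is concentrated in degree $0$ by the original ungraded \cite[Lemma 3.9]{jibladze_cohomology_1991}; hence the inclusion of the $0$-dimensional horizontal cohomology into the total complex is a quasi-isomorphism. That $0$-dimensional cohomology is the end of $\hom_{\C T}(\mathbb Z(-)\otimes_{\mathbb Z\C T}B_{\bullet}(\mathbb Z\C T)\otimes_{\mathbb Z\C T}\C T,-)$, which by the extension--restriction adjunction equals $\hom_{\mathbb Z\C T}(\mathbb Z(-)\otimes_{\mathbb Z\C T}B_{\bullet}(\mathbb Z\C T),-)$; since the forceful linearization preserves colimits and $\C T\subset\modulesfp{\C T}$ is dense \cite[\S5.1]{kelly_basic_2005}, this functor is the left Kan extension of its restriction to $\C T$ \cite[Theorem 5.29]{kelly_basic_2005}, so the end is computed on $\C T$, giving $\hom_{\mathbb Z\C T^{\env}}(B_{\bullet}(\mathbb Z\C T),\C T)$, whose cohomology is $H^{p+q}(\C T,\C T)$. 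The explicit quasi-isomorphism $\xi\colon\hom_{\mathbb Z\C T^{\env}}(B_{\bullet}(\mathbb Z\C T),\C T)\hookrightarrow C^{\star,\bullet}$ is the ungraded instance of \eqref{quasi_iso}.

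The only hypothesis-level point that requires attention, and the step I expect to be the main obstacle, is the applicability of \cite[Proposition 13.2]{andre_methode_1967}: André's framework needs every finitely presented right $\C T$-module to admit a projective resolution by representables $\C T(-,X)$ (equivalently, via Dold--Kan, a simplicial resolution by objects of $\C T$). This is precisely what the hypothesis that $\modulesfp{\C T}$ be abelian provides, and — as in the remarks following Proposition \ref{spectral_sequence_top} — idempotent completeness of $\C T$ is not needed, since only the existence of such resolutions is used, not the identification of $\C T$ with the full subcategory of projectives of $\modulesfp{\C T}$. Everything else is the routine ungraded transcription of Proposition \ref{spectral_sequence_top}.
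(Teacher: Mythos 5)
Your proposal is correct and coincides with the paper's own proof, which consists precisely of taking the bicomplex $\hom_{\mathbb Z\modulesfp{\C T}^{\env}}(B_{\star}(\mathbb Z\modulesfp{\C T}),\hom_{\C T}(\mathbb Z(-)\otimes_{\mathbb Z\C T}B_{\bullet}(\mathbb Z\C T)\otimes_{\mathbb Z\C T}\C T,-))$ and observing that the argument of Proposition \ref{spectral_sequence_top} carries over verbatim because the hypotheses guarantee that every finitely presented right $\C T$-module has a projective resolution by objects of $\C T$. You correctly identify this resolution point (rather than idempotent completeness) as the only hypothesis-level issue, exactly as the paper does.
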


\begin{proof}
	It is the spectral sequence of the bicomplex
	\[\hom_{\mathbb Z\modulesfp{\C T }^{\env}}^*(B_{\star}(\mathbb Z\modulesfp{\C T }),\hom_{\C T }^*(\mathbb Z(-)\otimes_{\mathbb Z\C T }B_{\bullet}(\mathbb Z\C T )\otimes_{\mathbb Z\C T }\C T ,-)).\]
	The proof is exactly the same as for Proposition \ref{spectral_sequence_top} since the hypotheses imply that any finitely presented right $\C T$-module has a projective resolution by objects in $\C T$.
\end{proof}

This proposition has a Hochschild analogue, but we will not use it here.

\begin{remark}\label{morphism_ss}
	It is straightforward to notice, looking at the bicomplexes defining these spectral sequences, that the following graded-to-ungraded comparison morphisms 
	\begin{gather*}
	i^*\colon H^{p,0}(\modulesfp{\C T_{\Sigma}},\ext_{\C T_{\Sigma}}^{p,*})\longrightarrow H^{p}(\modulesfp{\C T },\ext_{\C T }^{p}),\\
	i^*\colon H^{p+q,0}(\C T_{\Sigma},\C T_{\Sigma})\longrightarrow H^{p+q}(\C T ,\C T ),
	\end{gather*}
	which fit into the long exact sequence \eqref{exact_cohomology_sequence_top} derived from Proposition \ref{exact_cohomology_sequence}, 
	are part of a morphism from the spectral sequence in Proposition \ref{spectral_sequence_top} to that in Proposition \ref{spectral_sequence_ungraded}.	
\end{remark}

We will actually use a version with coefficients of the previous spectral sequence.

\begin{proposition}\label{spectral_sequence_with_coefficients}
	If $\C T$ is a small additive category such that $\modulesfp{\C T}$ is abelian and $M$ is a $\C T$-bimodule, there is a first quadrant cohomological spectral sequence of graded abelian groups
	\[E_2^{p,q}=H^{p}(\modulesfp{\C T },\ext_{\C T }^{p}(-,-\otimes_{\C T}M))\Longrightarrow H^{p+q}(\C T ,M ).\]
\end{proposition}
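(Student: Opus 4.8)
The plan is to exhibit this spectral sequence, exactly as in Propositions~\ref{spectral_sequence_top} and~\ref{spectral_sequence_ungraded}, as one of the two spectral sequences of a bicomplex, with the coefficient bimodule $M$ now threaded through the construction. I would take
\[
C^{\star,\bullet}=\hom_{\mathbb Z\modulesfp{\C T}^{\env}}^*\big(B_{\star}(\mathbb Z\modulesfp{\C T}),\,\hom_{\C T}^*(\mathbb Z(-)\otimes_{\mathbb Z\C T}B_{\bullet}(\mathbb Z\C T)\otimes_{\mathbb Z\C T}\C T,\,-\otimes_{\C T}M)\big),
\]
whose bidegree $(p,q)$ part is a family of maps of graded sets
\[
\textstyle\prod_{i=1}^{p}\hom_{\C T}^*(N_i,N_{i-1})\times N_p(X_0)\times\prod_{j=1}^{q}\C T(X_j,X_{j-1})\To(N_0\otimes_{\C T}M)(X_q),
\]
indexed by sequences $N_0,\dots,N_p$ in $\modulesfp{\C T}$ and $X_0,\dots,X_q$ in $\C T$, with horizontal and vertical differentials still given by~\eqref{differentials_bicomplex}, the only change being that values now lie in $N_0\otimes_{\C T}M$ rather than in $N_0$.

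For the first spectral sequence, \emph{vertical then horizontal}, I would run the André argument from the proof of Proposition~\ref{spectral_sequence_top} unchanged. Since $\modulesfp{\C T}$ is abelian, every finitely presented right $\C T$-module $N$ admits a projective resolution by objects of $\C T$, so $\mathbb Z(N)\otimes_{\mathbb Z\C T}B_{\bullet}(\mathbb Z\C T)\otimes_{\mathbb Z\C T}\C T$ is a projective resolution of $N$ (its André--Quillen homology $H_*(N,y)$ being $N$ concentrated in degree $0$). Hence the $\bullet$-direction cohomology of $\hom_{\C T}^*(\mathbb Z(-)\otimes_{\mathbb Z\C T}B_{\bullet}(\mathbb Z\C T)\otimes_{\mathbb Z\C T}\C T,\,N'\otimes_{\C T}M)$ is $\ext_{\C T}^{\bullet}(N,N'\otimes_{\C T}M)$, and, $\mathbb Z\modulesfp{\C T}$ being locally free, $B_{\star}(\mathbb Z\modulesfp{\C T})$ is a projective bimodule resolution of $\modulesfp{\C T}$, so the subsequent $\star$-direction cohomology gives $E_2^{p,q}=H^{p}(\modulesfp{\C T},\ext_{\C T}^{q}(-,-\otimes_{\C T}M))$.

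For the second spectral sequence, \emph{horizontal then vertical}, I would follow the identification in Proposition~\ref{spectral_sequence_top}. Expanding the bar construction by the Yoneda lemma, the inner $\mathbb Z\modulesfp{\C T}$-bimodule at bar degree $n$ is a product of bimodules of the form $(N,N')\mapsto\hom_{\operatorname{Set}}^*(N(X_0),F(N'))$, in which the covariant variable $N'$ enters only through the functor $F$; the graded version of Jibladze's lemma \cite[Lemma~3.9]{jibladze_cohomology_1991}, i.e.\ the kind of contracting homotopy manufactured in the proofs of Propositions~\ref{spectral_sequence} and~\ref{spectral_sequence_top} out of the identification $\hom_{\C T}(\C T(-,X_0),N)=N(X_0)$, then shows that the cohomology of $\modulesfp{\C T}$ with such coefficients is concentrated in degree $0$, and this step is insensitive to $M$. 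Therefore the inclusion of the $0$-dimensional horizontal cohomology into the total complex is a quasi-isomorphism. By the extension--restriction of scalars adjunction along $\otimes_{\mathbb Z\C T}\C T$, that $0$-dimensional cohomology is the end of the complex of $\mathbb Z\modulesfp{\C T}$-bimodules $\hom_{\mathbb Z\C T}^*(\mathbb Z(-)\otimes_{\mathbb Z\C T}B_{\bullet}(\mathbb Z\C T),\,-\otimes_{\C T}M)$, i.e.\ dimensionwise the group of natural transformations from $\mathbb Z(-)\otimes_{\mathbb Z\C T}B_{\bullet}(\mathbb Z\C T)$ to $-\otimes_{\C T}M$ as functors $\modulesfp{\C T}\to\modules{\C T}$; the source preserves colimits and $\C T\subset\modulesfp{\C T}$ is dense \cite[\S5.1]{kelly_basic_2005}, so the source is the left Kan extension of its restriction \cite[Theorem~5.29]{kelly_basic_2005} and the end may be computed on $\C T$. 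There the coend formula gives $\C T(-,X)\otimes_{\C T}M\cong M(-,X)$, so the end collapses to the Hochschild cochain complex $\hc{\bullet,*}{\mathbb Z\C T,M}$, whose cohomology is $H^{p+q}(\C T,M)$; the explicit quasi-isomorphism is the analogue of $\xi$ in~\eqref{quasi_iso}, sending $(g,f_1,\dots,f_n)$ with $g\in N(X_0)$ to the class of $g\otimes\varphi(f_1,\dots,f_n)$ in $(N\otimes_{\C T}M)(X_n)$.

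The one place where the coefficients $M$ are more than cosmetic, and hence the step to pin down, is this last identification: one checks that twisting by $M$ spoils neither the degree-$0$ concentration of the horizontal cohomology (it does not, since the contracting homotopy is built from the representable $\C T(-,X_0)$ and never touches the target), nor the left Kan extension argument (it does not, since $-\otimes_{\C T}M$ still preserves colimits), and that the resulting end genuinely carries the Hochschild differential of $\mathbb Z\C T$ with coefficients in $M$ — the same unwinding as in the case $M=\C T$ of Proposition~\ref{spectral_sequence_top}. Everything else is a verbatim transcription of the proofs of Propositions~\ref{spectral_sequence_top} and~\ref{spectral_sequence_ungraded}.
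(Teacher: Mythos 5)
Your proposal is correct and follows essentially the same route as the paper, which uses exactly this bicomplex and simply observes that tensoring the last slot with $M$ changes nothing in the arguments of Propositions \ref{spectral_sequence_top} and \ref{spectral_sequence_ungraded} beyond the $E_2$-term and the target. Your extra care at the final step — noting that the contracting homotopy only involves the contravariant slot and that restricting the end along the dense inclusion $\C T\subset\modulesfp{\C T}$ collapses it to $\hom_{\mathbb Z\C T^{\env}}^*(B_\bullet(\mathbb Z\C T),M)$ via $\C T(-,X)\otimes_{\C T}M\cong M(-,X)$ — just makes explicit what the paper leaves implicit.
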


\begin{proof}
	It is the spectral sequence of the bicomplex
	\[\hom_{\mathbb Z\modulesfp{\C T }^{\env}}^*(B_{\star}(\mathbb Z\modulesfp{\C T }),\hom_{\C T }^*(\mathbb Z(-)\otimes_{\mathbb Z\C T }B_{\bullet}(\mathbb Z\C T )\otimes_{\mathbb Z\C T }\C T ,-\otimes_{\C{T}}M)).\]
	Observe that this is the same bicomplex as in the proof of Proposition  \ref{spectral_sequence_ungraded} with the last slot tensored by $M$. The proof is not any different to the proofs of Propositions \ref{spectral_sequence_top} and \ref{spectral_sequence_ungraded} since the last slot does not play any relevant role therein, it only slightly changes the $E_2$ term and the target.
\end{proof}

This proposition has graded and Hochschild analogues that will not be used in this paper. The particular version we present here is a special case of \cite[Theorem B]{jibladze_cohomology_1991}. The target in Jibladze and Pirashvili's spectral sequence is apparently different to ours, but it is possible to check that both coincide under our assumptions.


\begin{remark}\label{morphism_ss_coefficients}
	As in Remark \ref{morphism_ss}, the following graded-to-ungraded comparison morphisms 
	\begin{gather*}
	i^*\colon H^{p,r}(\modulesfp{\C T_{\Sigma}},\ext_{\C T_{\Sigma}}^{p,*})\longrightarrow H^{p}(\modulesfp{\C T },\ext_{\C T }^{p}(-,\Sigma^{r})),\\
	i^*\colon H^{p+q,r}(\C T_{\Sigma},\C T_{\Sigma})\longrightarrow H^{p+q}(\C T ,\C T(-,\Sigma^{r})),
	\end{gather*}
	also fitting into the long exact sequence \eqref{exact_cohomology_sequence_top} derived from Proposition \ref{exact_cohomology_sequence}, 
	are part of a morphism from the spectral sequence in Proposition \ref{spectral_sequence_top} to that in Proposition \ref{spectral_sequence_with_coefficients} for $M=\C T_{\Sigma}^r=\C T(-,\Sigma^r)$.
\end{remark}

We can define the \emph{Hochschild homology} $HH_\star(\C C, M)$ of a graded $k$-linear category $\C C$ with coefficients in a $\C C$-bimodule $M$ as the homology of $B_\ast(\C C)\otimes_{\C C^{\env}}M$. Here $\star$ is the Hochschild grading but there is an extra hidden grading coming from the fact that $\C C$ is graded, as with cohomology. Moreover, if $\C C$ is non-linear, i.e.~enriched in graded sets, we can define the \emph{homology} $H_\star(\C C,M)$ of $\C C$ with coefficients in a $\mathbb Z\C C$-bimodule $M$ as $HH_\star(\mathbb Z\C C, M)$. The ungraded cases are just  particular cases of this.

Let $R$ be a ring and $\free{R}$ the category of finitely generated free $R$-modules. The homology $H_\star(\free{R},\hom_R)$ is the Mac Lane or topological Hochschild homology of $R$ \cite{jibladze_cohomology_1991,pirashvili_mac_1992}. We need yet another spectral sequence, this time a universal coefficients one.

\begin{proposition}\label{universal_coefficients}
	For any commutative ring $R$ and any $R$-module $M$, there is a first quadrant cohomological spectral sequence
	\[E_2^{p,q}=\ext^p_R(H_q(\free{R},\hom_R),M)\Longrightarrow H^{p+q}(\free{R},\hom_R(-,-\otimes_RM)).\]
\end{proposition}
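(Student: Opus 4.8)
The plan is to recognise the target as the cohomology of the $R$-linear dual of the Mac Lane chain complex of $R$ and then to apply the standard universal coefficients spectral sequence for a bounded-below complex of projectives.

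\emph{Identifying the coefficients.} Since every object of $\free{R}$ is a finitely generated projective $R$-module, the canonical map $\hom_R(X,Y)\otimes_R M\to\hom_R(X,Y\otimes_R M)$ is an isomorphism, natural in $X$ and $Y$; thus the $\free{R}$-bimodule $\hom_R(-,-\otimes_R M)$ equals $\underline{\hom}_R\otimes_R M$, where $\underline{\hom}_R$ is the diagonal $\free{R}$-bimodule $(X,Y)\mapsto\hom_R(X,Y)$ and $R$ acts centrally through its $R$-module structure on each $\hom_R(X,Y)$. Hence $\hbw{\star}{\free{R},\hom_R(-,-\otimes_R M)}=\hh{\star}{\mathbb Z\free{R},\underline{\hom}_R\otimes_R M}$, the cohomology of the cochain complex $\hom_{\mathbb Z\free{R}^{\env}}(B_\star(\mathbb Z\free{R}),\underline{\hom}_R\otimes_R M)$.

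\emph{Dualising.} Let $C_\star=B_\star(\mathbb Z\free{R})\otimes_{\mathbb Z\free{R}^{\env}}\underline{\hom}_R$ be the complex whose homology is by definition $H_\star(\free{R},\hom_R)$ (each $\mathbb Z\free{R}(X,Y)$ being a free abelian group, $\mathbb Z\free{R}$ is locally free). Applying $\hom_R(-,M)$ degreewise and using the tensor--hom adjunction together with the trace duality $\hom_R(\hom_R(X_n,X_0),M)\cong\hom_R(X_0,X_n)\otimes_R M$ (valid because $\hom_R(X_n,X_0)$ is finitely generated free), one identifies $\hom_R(C_\star,M)$ with the Hochschild cochain complex of $\mathbb Z\free{R}^{\op}$ with coefficients in the $\free{R}^{\op}$-bimodule $(X,Y)\mapsto\hom_R(Y,X)\otimes_R M$. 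Since the Hochschild cohomology of a category coincides with that of its opposite with transposed coefficients --- equivalently, via the self-duality $\free{R}\simeq\free{R}^{\op}$, $X\mapsto\hom_R(X,R)$ --- its cohomology is again $\hh{\star}{\mathbb Z\free{R},\underline{\hom}_R\otimes_R M}$. Therefore $H^n(\hom_R(C_\star,M))\cong\hbw{n}{\free{R},\hom_R(-,-\otimes_R M)}$.

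\emph{The spectral sequence.} Each $C_n$ is a coproduct of free abelian groups tensored over $\mathbb Z$ with finitely generated free $R$-modules, hence is a free $R$-module, and $C_\star$ is concentrated in non-negative degrees. Pick an injective resolution $M\to I^\bullet$ and form the double complex $\hom_R(C_\star,I^\bullet)$. Because each $C_n$ is projective, $\hom_R(C_n,-)$ is exact, so one filtration yields a quasi-isomorphism $\hom_R(C_\star,M)\xrightarrow{\ \sim\ }\operatorname{Tot}\hom_R(C_\star,I^\bullet)$; the other filtration has $E_1^{p,q}=\hom_R(H_q(C_\star),I^p)$, hence $E_2^{p,q}=\ext^p_R(H_q(C_\star),M)$. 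As $C_\star$ sits in non-negative degrees and $I^\bullet$ in non-negative cohomological degrees, this spectral sequence is first quadrant and converges. Combining it with the previous step and with $H_q(C_\star)=H_q(\free{R},\hom_R)$ produces the asserted first quadrant cohomological spectral sequence $E_2^{p,q}=\ext^p_R(H_q(\free{R},\hom_R),M)\Longrightarrow\hbw{p+q}{\free{R},\hom_R(-,-\otimes_R M)}$.

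The main obstacle is the dualising step: keeping track of variances and of the transpose introduced by trace duality, and verifying that the differential of $\hom_R(C_\star,M)$ matches the Hochschild cochain differential of $\mathbb Z\free{R}^{\op}$ under these identifications. Everything else is routine homological algebra, and the first quadrant and convergence statements are automatic from the boundedness of $C_\star$.
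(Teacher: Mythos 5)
Your proposal is correct and follows essentially the same route as the paper: you identify $\hom_R(C_\star,M)$, the $R$-dual of the complex computing $H_\star(\free{R},\hom_R)$, with the cochain complex computing $H^{\star}(\free{R},\hom_R(-,-\otimes_RM))$ via the isomorphisms $\hom_R(P,Q)\otimes_RM\cong\hom_R(P,Q\otimes_RM)$ and the trace duality $\hom_R(P,Q)\otimes_RM\cong\hom_R(\hom_R(Q,P),M)$, and then run the two filtrations of the double complex $\hom_R(C_\star,I^\bullet)$ for an injective resolution $M\to I^\bullet$, exactly as in the paper. The only cosmetic difference is your detour through $\free{R}^{\op}$ and its self-duality, which the paper avoids by applying $\beta\alpha^{-1}$ factorwise to get a direct isomorphism of cochain complexes (the differential compatibility you flag is the same check the paper dismisses as straightforward).
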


\begin{proof}
	Given two finitely generated free $R$-modules $P$ and $Q$, there are natural isomorphisms
	$$\hom_R(P,Q\otimes_R M)\st{\alpha}\longleftarrow \hom_R(P,Q)\otimes_RM\st{\beta}\To\hom_R(\hom_R(Q,P),M)$$
	defined by
	\begin{align*}
		\alpha(f\otimes m)(x)&=f(x)\otimes m\\
		\beta (f\otimes m)(g)&=\operatorname{trace}(fg)\cdot m.
	\end{align*}
	Indeed the morphisms $\alpha$ and $\beta$ are clearly well defined and natural, so it is enough to check that they are isomorphisms for $P=Q=R$, and this case is trivial.
	
	The isomorphisms $\alpha$ and $\beta$ define an isomorphism of cochain complexes
	\begin{multline*}
		 \hom_{\mathbb Z\free{R}^{\env}}(B_\star(\mathbb Z\free{R}),\hom_R(-,-\otimes_RM))\\
		 \cong \hom_R(B_\star(\mathbb Z\free{R})\otimes_{\mathbb Z\free{R}} \hom_R,M).
	\end{multline*}
	Indeed, dimension-wise these complexes look like
	\begin{gather*}
		\prod_{P_0,\dots,P_n}\hom_{\mathbb Z}\left(\bigotimes_{i=1}^n \mathbb Z\hom_R(P_i,P_{i-1}), \hom_R(P_n,P_0\otimes M)\right),\\
		\prod_{P_0,\dots,P_n}\hom_{R}\left(\bigotimes_{i=1}^n \mathbb Z\hom_R(P_i,P_{i-1})\otimes \hom_R(P_0,P_n), M\right),
	\end{gather*}
	where the $P_i$ run over all sequences of $n$ finitely generated free $R$-modules. Hence, the top module is a product of copies of $\hom_R(P_n,P_0\otimes M)$ and the bottom module, by adjunction, is a product of copies of $\hom_R(\hom_R(P_n,P_0),M)$. Both products are indexed by the same set, which is the union of all sets $\prod_{i=1}^n \hom_R(P_i,P_{i-1})$ over all possible choices of $P_0,\dots, P_n$, so we just apply the isomorphism $\beta\alpha^{-1}$ factorwise. Compatibility with differentials is straightforward.
	
	If $I^*$ is an injective resolution of $M$, then the spectral sequence of the statement is the first-horizontal-then-vertical cohomology spectral sequence of the bicomplex
	\[\hom_R(B_\star(\mathbb Z\free{R})\otimes_{\mathbb Z\free{R}} \hom_R,I^*).\]
	The identification of the $E_2$-term is obvious because the functors $\hom_R(-,I^n)$ are exact. For the target of this spectral sequence, we use the other one, obtained by first taking vertical homology and then horizontal homology. As we have seen above, $B_\star(\mathbb Z\free{R})\otimes_{\mathbb Z\free{R}} \hom_R$ consists of free $R$-modules, so the vertical homology of the bicomplex is $\hom_R(B_\star(\mathbb Z\free{R})\otimes_{\mathbb Z\free{R}} \hom_R,M)$ concentrated in vertical degree $0$. Hence we are done.
\end{proof}

\begin{corollary}
	For any $\mathbb Z/4$-module $M$ we have a natural isomorphism \[H^{3}(\free{\mathbb Z/4},\hom_{\mathbb Z/4}(-,-\otimes_{\mathbb Z/4}M))\cong \hom_{\mathbb Z/4}(\mathbb Z/2,M).\]
\end{corollary}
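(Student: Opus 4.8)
The plan is to specialize Proposition~\ref{universal_coefficients} to $R=\mathbb{Z}/4$, which produces a first quadrant cohomological spectral sequence
\[E_2^{p,q}=\ext^p_{\mathbb{Z}/4}(H_q(\free{\mathbb{Z}/4},\hom_{\mathbb{Z}/4}),M)\Longrightarrow H^{p+q}(\free{\mathbb{Z}/4},\hom_{\mathbb{Z}/4}(-,-\otimes_{\mathbb{Z}/4}M)),\]
natural in $M$, and then to read off the abutment in total degree $3$ from the low-degree Mac Lane (equivalently, topological Hochschild) homology of $\mathbb{Z}/4$. The degree $0$ group is the ring itself, $H_0(\free{\mathbb{Z}/4},\hom_{\mathbb{Z}/4})=\mathbb{Z}/4$, and $H_1(\free{\mathbb{Z}/4},\hom_{\mathbb{Z}/4})\cong\Omega^1_{(\mathbb{Z}/4)/\mathbb{Z}}=0$ since $\mathbb{Z}/4$ is monogenic over $\mathbb{Z}$. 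For degrees $2$ and $3$ I would invoke the known computations of $THH_*(\mathbb{Z}/p^n)$ in low degrees (B\"okstedt's computation of $THH_*(\mathbb{Z})$ together with Brun's for $\mathbb{Z}/p^n$; compare also \cite{pirashvili_mac_1992,jibladze_cohomology_1991}), the facts needed being that $H_2(\free{\mathbb{Z}/4},\hom_{\mathbb{Z}/4})$ is a free $\mathbb{Z}/4$-module and $H_3(\free{\mathbb{Z}/4},\hom_{\mathbb{Z}/4})\cong\mathbb{Z}/2$. Alternatively, these two groups can be extracted directly from the standard complex computing $H_\ast(\free{\mathbb{Z}/4},\hom_{\mathbb{Z}/4})$ truncated in low degrees.

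Granting this input, the antidiagonal $p+q=3$ of the $E_2$-page reduces to the single entry at $(0,3)$: one has $E_2^{3,0}=\ext^3_{\mathbb{Z}/4}(\mathbb{Z}/4,M)=0$ because $\mathbb{Z}/4$ is projective over itself, $E_2^{2,1}=\ext^2_{\mathbb{Z}/4}(0,M)=0$, $E_2^{1,2}=\ext^1_{\mathbb{Z}/4}(H_2,M)=0$ since $H_2$ is free, and $E_2^{0,3}=\hom_{\mathbb{Z}/4}(\mathbb{Z}/2,M)$. It remains to check that $E_2^{0,3}$ is a permanent cycle that survives undamaged. No differential can enter it, since the sources $E_r^{-r,r+2}$ lie outside the first quadrant; and the differentials leaving it, $d_r\colon E_r^{0,3}\to E_r^{r,4-r}$, land in subquotients of $E_2^{2,2}=\ext^2_{\mathbb{Z}/4}(H_2,M)=0$ for $r=2$, of $E_2^{3,1}=\ext^3_{\mathbb{Z}/4}(0,M)=0$ for $r=3$, of $E_2^{4,0}=\ext^4_{\mathbb{Z}/4}(\mathbb{Z}/4,M)=0$ for $r=4$, and vanish for $r\ge 5$ because then $4-r<0$. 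Hence $E_\infty^{0,3}=E_2^{0,3}=\hom_{\mathbb{Z}/4}(\mathbb{Z}/2,M)$ while $E_\infty^{p,3-p}=0$ for $p\ge 1$, so the decreasing filtration on $H^3(\free{\mathbb{Z}/4},\hom_{\mathbb{Z}/4}(-,-\otimes_{\mathbb{Z}/4}M))$ is concentrated in its top piece and the edge morphism is the asserted isomorphism; naturality in $M$ follows from that of the spectral sequence.

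The only substantive step is the computation of the low-degree Mac Lane homology of $\mathbb{Z}/4$, and in fact the formal part of the argument genuinely needs only that $H_1$ and $H_2$ contain no $2$-torsion — a $\mathbb{Z}/2$-summand in either would contribute a persistent $\ext^1_{\mathbb{Z}/4}(\mathbb{Z}/2,M)$ or $\ext^2_{\mathbb{Z}/4}(\mathbb{Z}/2,M)$ on the line $p+q=3$ — together with the identification $H_3\cong\mathbb{Z}/2$, which by the Yoneda lemma is exactly the statement that $\hom_{\mathbb{Z}/4}(H_3,-)$ is the $2$-torsion functor. This is the step that rests on external computation rather than on formal manipulation of the spectral sequence.
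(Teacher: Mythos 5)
Your argument is correct and is essentially the paper's own proof: specialize the universal coefficients spectral sequence of Proposition \ref{universal_coefficients} to $R=\mathbb Z/4$, feed in Brun's low-degree computation of the Mac Lane (topological Hochschild) homology of $\mathbb Z/4$, and observe that the line $p+q=3$ collapses to $E_2^{0,3}=\hom_{\mathbb Z/4}(\mathbb Z/2,M)$ with no nonzero differentials. Your extra remarks (deriving $H_1=0$ from $\Omega^1_{(\mathbb Z/4)/\mathbb Z}=0$, and noting that only freeness of $H_2$ is needed) are harmless refinements of the same argument.
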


\begin{proof}
	The topological Hochschild homology of $\mathbb Z/4$ is computed in \cite{brun_topological_2000}. The lower homology groups are
	\begin{equation*}
	H_n(\free{\mathbb Z/4},\hom_{\mathbb Z/4}) \;\;\cong\;\;\left\{\begin{array}{ll}
	\mathbb Z/4,&n=0,\\
	0,&n=1,\\
	\mathbb Z/4,&n=2,\\
	\mathbb Z/2,&n=3.
	\end{array}\right.
	\end{equation*}
	This implies that the $E_2$-term of the universal coefficients spectral sequence in Proposition \ref{universal_coefficients} satisfies $E^{p,q}_2=0$ for $p>0$ and $q<3$. Therefore 
	\[
	H^{3}(\free{\mathbb Z/4},\hom_{\mathbb Z/4}(-,-\otimes_{\mathbb Z/4}M))\cong E_2^{0,3}=
	\hom_{\mathbb Z/4}(\mathbb Z/2,M).
	\]
\end{proof}

\begin{corollary}\label{trivial}
	The natural projection $\mathbb Z/4\twoheadrightarrow \mathbb Z/2$ induces the trivial morphism 
	\[H^{3}(\free{\mathbb Z/4},\hom_{\mathbb Z/4}) \stackrel{0}\longrightarrow
	H^{3}(\free{\mathbb Z/4},\hom_{\mathbb Z/4}(-,-\otimes_{\mathbb Z/4}\mathbb Z/2)).\]
\end{corollary}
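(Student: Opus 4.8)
The plan is to reduce the statement to the naturality, in the coefficient module, of the isomorphism established in the preceding corollary, followed by a one-line computation in the category of $\mathbb Z/4$-modules. First I would observe that the morphism in question is the map induced on $H^3$ by the morphism of $\free{\mathbb Z/4}$-bimodules
\[\hom_{\mathbb Z/4}(-,-\otimes_{\mathbb Z/4}\mathbb Z/4)\To\hom_{\mathbb Z/4}(-,-\otimes_{\mathbb Z/4}\mathbb Z/2)\]
obtained by applying $\hom_{\mathbb Z/4}(-,-\otimes_{\mathbb Z/4}(-))$ to the $\mathbb Z/4$-linear projection $\pi\colon\mathbb Z/4\twoheadrightarrow\mathbb Z/2$, where on the source I use the canonical identification $-\otimes_{\mathbb Z/4}\mathbb Z/4\cong\operatorname{id}$ that turns $\hom_{\mathbb Z/4}(-,-\otimes_{\mathbb Z/4}\mathbb Z/4)$ into the bimodule $\hom_{\mathbb Z/4}$ appearing on the left-hand side of the statement.

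Since the isomorphism of the previous corollary is natural in the $\mathbb Z/4$-module $M$, the naturality square for the morphism $\pi\colon\mathbb Z/4\to\mathbb Z/2$ commutes, and the map in the statement is thereby identified with the post-composition map
\[\pi_\ast\colon\hom_{\mathbb Z/4}(\mathbb Z/2,\mathbb Z/4)\To\hom_{\mathbb Z/4}(\mathbb Z/2,\mathbb Z/2).\]
It then suffices to compute this elementary map. A $\mathbb Z/4$-linear homomorphism $\mathbb Z/2\to\mathbb Z/4$ must send the generator to an element annihilated by $2$, hence to $0$ or to $2$, so $\hom_{\mathbb Z/4}(\mathbb Z/2,\mathbb Z/4)$ is generated by the map $\iota$ with $\iota(1)=2$. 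Composing with $\pi$ gives $\pi\iota(1)=\pi(2)=0$ in $\mathbb Z/2$, so $\pi\iota=0$; as $\iota$ generates the source, $\pi_\ast=0$, which is the assertion.

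I do not expect a genuine obstacle in this argument: the substantive input — the computation of the low-degree topological Hochschild homology of $\mathbb Z/4$ and its passage through the universal coefficients spectral sequence of Proposition \ref{universal_coefficients} — has already been carried out in the two preceding corollaries. The only point requiring a little care is the bookkeeping in the first step, namely checking that the $H^3$-morphism we start from is exactly the one induced by $\pi$ after the identification $-\otimes_{\mathbb Z/4}\mathbb Z/4\cong\operatorname{id}$, so that the naturality square of step two applies verbatim.
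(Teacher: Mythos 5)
Your proposal is correct and coincides with the argument the paper intends: Corollary \ref{trivial} is stated without proof precisely because it follows from the naturality in $M$ of the isomorphism $H^{3}(\free{\mathbb Z/4},\hom_{\mathbb Z/4}(-,-\otimes_{\mathbb Z/4}M))\cong \hom_{\mathbb Z/4}(\mathbb Z/2,M)$ in the preceding corollary, which identifies the map in question with $\pi_\ast\colon\hom_{\mathbb Z/4}(\mathbb Z/2,\mathbb Z/4)\to\hom_{\mathbb Z/4}(\mathbb Z/2,\mathbb Z/2)$, and this vanishes since every $\mathbb Z/4$-linear map $\mathbb Z/2\to\mathbb Z/4$ has image in $2\mathbb Z/4=\ker\pi$. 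Your bookkeeping of the identification $-\otimes_{\mathbb Z/4}\mathbb Z/4\cong\operatorname{id}$ and the final computation are exactly right.
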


\begin{proposition}\label{injective}
	The natural projection $\mathbb Z/4\twoheadrightarrow \mathbb Z/2$ induces an injective morphism
	\[H^0(\modulesfp{\mathbb Z/4},\ext^n_{\mathbb Z/4})\hookrightarrow
	H^0(\modulesfp{\mathbb Z/4},\ext^n_{\mathbb Z/4}(-,-\otimes_{\mathbb Z/4}\mathbb Z/2)).\]
\end{proposition}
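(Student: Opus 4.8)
The plan is to use the completely explicit structure of finitely presented $\mathbb Z/4$-modules. The morphism in the statement is the map induced on ends by the natural transformation of $\modulesfp{\mathbb Z/4}$-bimodules
\[\ext^n_{\mathbb Z/4}(M,N)\To\ext^n_{\mathbb Z/4}(M,N\otimes_{\mathbb Z/4}\mathbb Z/2)\]
obtained by pushing forward along the canonical surjection $N\onto N\otimes_{\mathbb Z/4}\mathbb Z/2=N/2N$, which is $N\otimes_{\mathbb Z/4}(\mathbb Z/4\onto\mathbb Z/2)$; this is the sense in which it is ``induced by the projection $\mathbb Z/4\twoheadrightarrow\mathbb Z/2$''. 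A class in the source end is a family $\varphi=(\varphi_M)$, natural in $M$, with $\varphi_M\in\ext^n_{\mathbb Z/4}(M,M)$, and it lies in the kernel of the displayed map on ends exactly when the image of $\varphi_M$ in $\ext^n_{\mathbb Z/4}(M,M/2M)$ vanishes for every $M$. So the task is to show that this forces $\varphi=0$.

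First I would reduce to the single module $\mathbb Z/2$. Every object of $\modulesfp{\mathbb Z/4}$ is isomorphic to $(\mathbb Z/4)^a\oplus(\mathbb Z/2)^b$, and since $\mathbb Z/4$ is projective-injective over itself, $\ext^n_{\mathbb Z/4}$ ($n\geq 1$) vanishes on any pair of modules one of which has a free direct summand. Combining this vanishing with the biadditivity of $\ext^n_{\mathbb Z/4}$ and the naturality of $\varphi$ against the canonical inclusions and projections of direct summands, one checks that $\varphi_{(\mathbb Z/4)^a\oplus(\mathbb Z/2)^b}$ is pinned down by $\varphi_{(\mathbb Z/2)^b}$, which in turn is the ``diagonal'' family determined by the single class $\varphi_{\mathbb Z/2}\in\ext^n_{\mathbb Z/4}(\mathbb Z/2,\mathbb Z/2)$ (use naturality against the $b$ coordinate maps $\mathbb Z/2\rightleftarrows(\mathbb Z/2)^b$). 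Thus $\varphi=0$ if and only if $\varphi_{\mathbb Z/2}=0$.

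Second I would compute the component of the map at $M=\mathbb Z/2$. Since $2$ annihilates $\mathbb Z/2$, the canonical map $\mathbb Z/2\to\mathbb Z/2\otimes_{\mathbb Z/4}\mathbb Z/2=\mathbb Z/2/2(\mathbb Z/2)=\mathbb Z/2$ is the identity, so the induced map $\ext^n_{\mathbb Z/4}(\mathbb Z/2,\mathbb Z/2)\to\ext^n_{\mathbb Z/4}(\mathbb Z/2,\mathbb Z/2\otimes_{\mathbb Z/4}\mathbb Z/2)$ is an isomorphism. Hence if $\varphi$ maps to zero then so does its $\mathbb Z/2$-component, giving $\varphi_{\mathbb Z/2}=0$, and by the first step $\varphi=0$; this proves injectivity.

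I do not expect a real obstacle, since everything is completely explicit over $\mathbb Z/4$. The only point needing a little care is the reduction step, i.e.~verifying that naturality against the evident split inclusions and projections really determines $\varphi_M$ from $\varphi_{\mathbb Z/2}$; the inputs there are the vanishing $\ext^n_{\mathbb Z/4}((\mathbb Z/4)^a,-)=0=\ext^n_{\mathbb Z/4}(-,(\mathbb Z/4)^a)$ for $n\geq 1$ and the biadditivity of $\ext^n_{\mathbb Z/4}$. Throughout $n\geq1$, which is the case that is used, in particular $n=3$.
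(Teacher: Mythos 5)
Your proof is correct and takes essentially the same route as the paper: reduce a $0$-cocycle to its values $\varphi(\mathbb Z/4)$ and $\varphi(\mathbb Z/2)$ via the decomposition of finitely generated $\mathbb Z/4$-modules, kill the former because $\mathbb Z/4$ is projective, and note that the coefficient map is an isomorphism at $\mathbb Z/2$ since $\mathbb Z/2\otimes_{\mathbb Z/4}\mathbb Z/2=\mathbb Z/2$. The only difference is that you verify the reduction step by hand using naturality against the split inclusions and projections (and biadditivity of $\ext^n_{\mathbb Z/4}$), where the paper simply cites \cite{baues_sum-normalised_1996}, and your explicit restriction to $n\geq 1$ matches the paper's implicit use ($n=3$).
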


\begin{proof}	
	An element $\varphi$ in the source is a family of elements
	\[\varphi(N)\in \ext^n_{\mathbb Z/4}(N,N),\]
	where $N$ runs over all finitely generated $\mathbb Z/4$-modules, and similarly for the target. Any $N$ decomposes as a direct sum of copies of $\mathbb Z/4$ and $\mathbb Z/2$. Hence, by \cite{baues_sum-normalised_1996}, $\varphi$ only depends on the two special values $\varphi(\mathbb Z/4)$ and $\varphi(\mathbb Z/2)$, and the former is zero because $\mathbb Z/4$ is projective, so the latter suffices. Now the result follows from the fact that the natural projection obviously induces an injective morphism (actually an isomorphism)
	\[\ext^n_{\mathbb Z/4}(\mathbb Z/2,\mathbb Z/2\otimes_{\mathbb Z/4} \mathbb Z/4)\longrightarrow \ext^n_{\mathbb Z/4}(\mathbb Z/2,\mathbb Z/2\otimes_{\mathbb Z/4} \mathbb Z/2).\]
\end{proof}

We finally prove the result which implies that not all our first obstructions vanish, and hence $\free{\mathbb Z/4}$ does not have any topological enhancement.

\begin{proposition}
	If $\C T=\free{\mathbb Z/4}$ and $\Sigma\colon\C T\rightarrow \C T$ is the identity functor, then the edge morphism 
	\[\hbw{3,-1}{\C T_{\Sigma}, \C T_{\Sigma}}\To \hbw{0,-1}{\modulesfp{\C T_{\Sigma}},\ext_{\C T_{\Sigma}}^{3,\ast}}
	\]
	of the spectral sequence in Proposition \ref{spectral_sequence_top} is trivial.
\end{proposition}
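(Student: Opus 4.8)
The plan is to transport the edge morphism in question to the ungraded spectral sequence of Proposition~\ref{spectral_sequence_with_coefficients} through the comparison maps $i^*$, where the preceding corollaries already supply everything of substance. Concretely, I would first pass from the graded, degree $-1$ situation to an ungraded one, and then kill the resulting ungraded edge morphism by comparing the coefficients $\mathbb Z/4$ and $\mathbb Z/2$.

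First I would invoke Remark~\ref{morphism_ss_coefficients} with $r=-1$ and $\Sigma=\operatorname{id}_{\C T}$, so that $\C T_\Sigma^{-1}=\C T(-,\Sigma^{-1})=\hom_{\mathbb Z/4}(-,-)=\hom_{\mathbb Z/4}(-,-\otimes_{\mathbb Z/4}\mathbb Z/4)$. This produces a morphism from the spectral sequence of Proposition~\ref{spectral_sequence_top} to that of Proposition~\ref{spectral_sequence_with_coefficients} for the $\C T$-bimodule $\hom_{\mathbb Z/4}$, and since a morphism of first-quadrant cohomological spectral sequences is automatically compatible with the edge morphisms $H^n\to E_2^{0,n}$, it yields a commutative square
\[
\begin{array}{ccc}
\hbw{3,-1}{\C T_{\Sigma},\C T_{\Sigma}} & \longrightarrow & \hbw{0,-1}{\modulesfp{\C T_{\Sigma}},\ext_{\C T_{\Sigma}}^{3,\ast}} \\
\downarrow & & \downarrow \\
H^{3}(\free{\mathbb Z/4},\hom_{\mathbb Z/4}) & \longrightarrow & H^{0}(\modulesfp{\mathbb Z/4},\ext^{3}_{\mathbb Z/4})
\end{array}
\]
with vertical arrows the comparison morphisms $i^*$. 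Here the top row is the edge morphism to be shown trivial, and the bottom row is the $(0,3)$-edge morphism of Proposition~\ref{spectral_sequence_with_coefficients} for the bimodule $\hom_{\mathbb Z/4}=\hom_{\mathbb Z/4}(-,-\otimes_{\mathbb Z/4}\mathbb Z/4)$, after using $\ext^{3}_{\C T}(-,-\otimes_{\C T}\C T)=\ext^{3}_{\C T}$ and the equivalence $\modulesfp{\C T}\simeq\modulesfp{\mathbb Z/4}$. The right-hand vertical $i^*$ is an isomorphism, both groups being $\mathbb Z/2$, as established in the first proposition of this section. Hence it suffices to prove that the bottom edge morphism is zero.

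To do that I would use the $\free{\mathbb Z/4}$-bimodule morphism $\hom_{\mathbb Z/4}(-,-\otimes_{\mathbb Z/4}\mathbb Z/4)\to\hom_{\mathbb Z/4}(-,-\otimes_{\mathbb Z/4}\mathbb Z/2)$ induced by the ring surjection $\mathbb Z/4\twoheadrightarrow\mathbb Z/2$; it induces a morphism between the corresponding spectral sequences of Proposition~\ref{spectral_sequence_with_coefficients}, hence a commutative square of $(0,3)$-edge morphisms
\[
\begin{array}{ccc}
H^{3}(\free{\mathbb Z/4},\hom_{\mathbb Z/4}) & \longrightarrow & H^{0}(\modulesfp{\mathbb Z/4},\ext^{3}_{\mathbb Z/4}) \\
\downarrow & & \downarrow \\
H^{3}(\free{\mathbb Z/4},\hom_{\mathbb Z/4}(-,-\otimes_{\mathbb Z/4}\mathbb Z/2)) & \longrightarrow & H^{0}(\modulesfp{\mathbb Z/4},\ext^{3}_{\mathbb Z/4}(-,-\otimes_{\mathbb Z/4}\mathbb Z/2))
\end{array}
\]
in which the left vertical map vanishes by Corollary~\ref{trivial} and the right vertical map is injective by Proposition~\ref{injective}. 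Chasing the square, the top edge morphism followed by an injection is zero, so the top edge morphism is zero; combined with the previous square and the injectivity of the right $i^*$ there, this gives $\hbw{3,-1}{\C T_{\Sigma},\C T_{\Sigma}}\to\hbw{0,-1}{\modulesfp{\C T_{\Sigma}},\ext_{\C T_{\Sigma}}^{3,\ast}}$ trivial, as claimed.

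The genuinely computational input — the low-degree topological Hochschild homology of $\mathbb Z/4$ from \cite{brun_topological_2000} — has already been absorbed into Corollary~\ref{trivial} and Proposition~\ref{injective}, so what is left is essentially a diagram chase. The step demanding most care is the bookkeeping in the second paragraph: identifying the $E_2^{0,3}$-terms and the abutments of the various instances of Proposition~\ref{spectral_sequence_with_coefficients} (for the bimodules $\hom_{\mathbb Z/4}(-,-\otimes_{\mathbb Z/4}M)$, $M=\mathbb Z/4,\mathbb Z/2$) with the groups appearing in the preceding corollaries, and verifying that the right-hand $i^*$ of the first square is exactly the isomorphism onto $\mathbb Z/2$ recorded earlier. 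Once these identifications are in place, the rest is formal.
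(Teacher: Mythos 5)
Your argument is correct and is essentially the paper's own proof: the same two commutative squares (the graded-to-ungraded comparison of Remark \ref{morphism_ss_coefficients}, then the coefficient change $\mathbb Z/4\twoheadrightarrow\mathbb Z/2$ via naturality of Proposition \ref{spectral_sequence_with_coefficients}), with Corollary \ref{trivial} and Proposition \ref{injective} supplying the vanishing and injectivity for the diagram chase. The only cosmetic difference is that you justify injectivity of the right-hand $i^*$ via the isomorphism $\cong\mathbb Z/2$ from the first proposition of the section, whereas the paper quotes the long exact sequence \eqref{exact_cohomology_sequence_top} directly; both are fine.
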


\begin{proof}
	Recall that $\C T$ is idempotent complete, as required by Proposition \ref{spectral_sequence_top}, since projective $\mathbb Z/4$-modules are free, and $\C T$-modules are the same as $\mathbb Z/4$-modules.
	
	Using the spectral sequence morphism in Remark \ref{morphism_ss_coefficients} we obtain a commutative diagram 
	\begin{center}
		\begin{tikzcd}
			\hbw{3,-1}{\C T_{\Sigma}, \C T_{\Sigma}}\arrow[r, "i^*"]\arrow[d]& \hbw{3}{\C T , \C T(-,\Sigma^{-1}) }\arrow[d]\\
			\hbw{0,-1}{\modulesfp{\C T_{\Sigma}},\ext_{\C T_{\Sigma}}^{3,\ast}}\arrow[r, hook, "i^*"]& \hbw{0}{\modulesfp{\C T },\ext_{\C T }^{3}(-,\Sigma^{-1})}
		\end{tikzcd}
	\end{center}
	where the vertical arrows are edge morphisms and the horizontal arrows are graded-to-ungraded comparison morphisms. The one at the bottom is injective by \eqref{exact_cohomology_sequence_top}. Therefore it suffices to show that the right edge morphism vanishes. The suspension functor is the identity, hence the right edge morphism is the morphism on the left of the following commutative diagram
	\begin{center}
		\begin{tikzcd}
			\hbw{3}{\free{\mathbb Z/4}, \hom_{\mathbb Z/4}}\arrow[d]\arrow[r,"0"]&\hbw{3}{\free{\mathbb Z/4}, \hom_{\mathbb Z/4}(-,-\otimes_{\mathbb Z/4}\mathbb Z/2)} \arrow[d]\\
			\hbw{0}{\modulesfp{\mathbb Z/4},\ext_{\mathbb Z/4}^{3}}
			\arrow[r,hook]& \hbw{0}{\modulesfp{\mathbb Z/4},\ext_{\mathbb Z/4}^{3}(-,-\otimes_{\mathbb Z/4}\mathbb Z/2)}
		\end{tikzcd}
	\end{center}
	This diagram is defined by the natural projection $\mathbb Z/4\twoheadrightarrow\mathbb Z/2$ and by the naturality in $M$ of the spectral sequence in Proposition \ref{spectral_sequence_with_coefficients}. The top arrow is trivial by Corollary \ref{trivial} and the bottom arrow is injective by Proposition \ref{injective}, therefore the left vertical arrow is necessarily trivial.

\end{proof}

We would like to remark that the computations carried out in this section are closely related to (some of them even directly taken from) the first (unpublished) proof of the fact that the triangulated structure of $\free{\mathbb Z/4}$ does not admit topological enhancements \cite{muro_triangulated_2007-1}.



\begin{thebibliography}{LVdB05}
	
	\bibitem[Ami07]{amiot_structure_2007}
	Claire Amiot, \emph{On the structure of triangulated categories with finitely
		many indecomposables}, Bull. Soc. Math. France \textbf{135} (2007), no.~3,
	435--474. \MR{2430189}
	
	\bibitem[And67]{andre_methode_1967}
	Michel André, \emph{Méthode simpliciale en algèbre homologique et algèbre
		commutative}, Lecture {Notes} in {Mathematics}, {Vol}. 32, Springer-Verlag,
	Berlin-New York, 1967. \MR{0214644}
	
	\bibitem[Ang08]{angeltveit_topological_2008}
	Vigleik Angeltveit, \emph{Topological {Hochschild} homology and cohomology of
		\textit{{A}}$_{\textrm{$\infty$}}$ ring spectra}, Geom. Topol. \textbf{12}
	(2008), no.~2, 987--1032. \MR{2403804}
	
	\bibitem[BD89]{baues_cohomology_1989}
	Hans~Joachim Baues and Winfried Dreckmann, \emph{The cohomology of homotopy
		categories and the general linear group}, \$K\$-Theory \textbf{3} (1989),
	no.~4, 307--338. \MR{1047191}
	
	\bibitem[BK91]{bondal_enhanced_1991}
	A.~I. Bondal and M.~M. Kapranov, \emph{Enhanced triangulated categories}, Mat.
	USSR Sb. \textbf{70} (1991), no.~1, 93--107.
	
	\bibitem[BKS04]{benson_realizability_2004}
	David Benson, Henning Krause, and Stefan Schwede, \emph{Realizability of
		modules over {Tate} cohomology}, Trans. Amer. Math. Soc. \textbf{356} (2004),
	no.~9, 3621--3668. \MR{2055748}
	
	\bibitem[BM07]{baues_homotopy_2007}
	Hans-Joachim Baues and Fernando Muro, \emph{The homotopy category of
		pseudofunctors and translation cohomology}, J. Pure Appl. Algebra
	\textbf{211} (2007), no.~3, 821--850. \MR{2344231}
	
	\bibitem[BM08]{baues_cohomologically_2008}
	H.-J. Baues and F.~Muro, \emph{Cohomologically triangulated categories. {I}},
	J. K-Theory \textbf{1} (2008), no.~1, 3--48. \MR{2424565}
	
	\bibitem[Bru00]{brun_topological_2000}
	M.~Brun, \emph{Topological {Hochschild} homology of {Z}/p$^{\textrm{n}}$}, J.
	Pure Appl. Algebra \textbf{148} (2000), no.~1, 29--76. \MR{1750729}
	
	\bibitem[BS01]{balmer_idempotent_2001}
	Paul Balmer and Marco Schlichting, \emph{Idempotent completion of triangulated
		categories}, J. Algebra \textbf{236} (2001), no.~2, 819--834. \MR{1813503}
	
	\bibitem[BT96]{baues_sum-normalised_1996}
	Hans-Joachim Baues and Andy Tonks, \emph{On sum-normalised cohomology of
		categories, twisted homotopy pairs and universal {Toda} brackets}, Quart. J.
	Math. Oxford Ser. (2) \textbf{47} (1996), no.~188, 405--433. \MR{1460232}
	
	\bibitem[CS15]{canonaco_uniqueness_2015}
	Alberto Canonaco and Paolo Stellari, \emph{Uniqueness of dg enhancements for
		the derived category of a {Grothendieck} category}, arXiv:1507.05509 [math]
	(2015), to appear in J. Eur. Math. Soc.
	
	\bibitem[Dim09]{dimitrova_triangulated_2009}
	Boryana Dimitrova, \emph{Triangulated structures for projective modules},
	arXiv:0912.4708 [math] (2009).
	
	\bibitem[Fre66]{freyd_stable_1966}
	Peter Freyd, \emph{Stable homotopy}, Proc. {Conf}. {Categorical} {Algebra}
	({La} {Jolla}, {Calif}., 1965), Springer, New York, 1966, pp.~121--172.
	\MR{0211399}
	
	\bibitem[Hel68]{heller_stable_1968}
	Alex Heller, \emph{Stable homotopy categories}, Bull. Amer. Math. Soc.
	\textbf{74} (1968), 28--63. \MR{0224090}
	
	\bibitem[Hov99]{hovey_model_1999}
	Mark Hovey, \emph{Model categories}, Mathematical {Surveys} and {Monographs},
	vol.~63, American Mathematical Society, Providence, RI, 1999. \MR{1650134}
	
	\bibitem[JP91]{jibladze_cohomology_1991}
	Mamuka Jibladze and Teimuraz Pirashvili, \emph{Cohomology of algebraic
		theories}, J. Algebra \textbf{137} (1991), no.~2, 253--296. \MR{1094244}
	
	\bibitem[Kad80]{kadeishvili_theory_1980}
	T.~V. Kadeishvili, \emph{On the theory of homology of fiber spaces}, Uspekhi
	Mat. Nauk \textbf{35} (1980), no.~3(213), 183--188, International Topology
	Conference (Moscow State Univ., Moscow, 1979).
	
	\bibitem[Kad82]{kadeishvili_algebraic_1982}
	\bysame, \emph{The algebraic structure in the homology of an
		\textit{{A}}($\infty$)-algebra}, Soobshch. Akad. Nauk Gruzin. SSR
	\textbf{108} (1982), no.~2, 249--252 (1983). \MR{720689}
	
	\bibitem[Kaj13]{kajiura_-enhancements_2013}
	Hiroshige Kajiura, \emph{On \textit{{A}}$_{\textrm{$\infty$}}$-enhancements for
		triangulated categories}, J. Pure Appl. Algebra \textbf{217} (2013), no.~8,
	1476--1503. \MR{3030547}
	
	\bibitem[Kel05]{kelly_basic_2005}
	G.~M. Kelly, \emph{Basic concepts of enriched category theory}, Repr. Theory
	Appl. Categ. (2005), no.~10, vi+137. \MR{2177301}
	
	\bibitem[LH03]{lefevre-hasegawa_sur_2003}
	Kenji Lefèvre-Hasegawa, \emph{Sur les
		\textit{{A}}$_{\textrm{$\infty$}}$-catégories}, Ph.D. thesis, Université
	Paris 7, 2003.
	
	\bibitem[LO10]{lunts_uniqueness_2010}
	Valery~A. Lunts and Dmitri~O. Orlov, \emph{Uniqueness of enhancement for
		triangulated categories}, J. Amer. Math. Soc. \textbf{23} (2010), no.~3,
	853--908. \MR{2629991}
	
	\bibitem[LVdB05]{lowen_hochschild_2005}
	Wendy Lowen and Michel Van~den Bergh, \emph{Hochschild cohomology of abelian
		categories and ringed spaces}, Adv. Math. \textbf{198} (2005), no.~1,
	172--221. \MR{2183254}
	
	\bibitem[Mit72]{mitchell_rings_1972}
	Barry Mitchell, \emph{Rings with several objects}, Advances in Math. \textbf{8}
	(1972), 1--161. \MR{0294454}
	
	\bibitem[ML63]{mac_lane_homology_1963}
	Saunders Mac~Lane, \emph{Homology}, Die {Grundlehren} der mathematischen
	{Wissenschaften}, {Bd}. 114, Academic Press, Inc., Publishers, New York;
	Springer-Verlag, Berlin-Göttingen-Heidelberg, 1963. \MR{0156879}
	
	\bibitem[MSS07]{muro_triangulated_2007}
	Fernando Muro, Stefan Schwede, and Neil Strickland, \emph{Triangulated
		categories without models}, Invent. Math. \textbf{170} (2007), no.~2,
	231--241. \MR{2342636}
	
	\bibitem[Mur06]{muro_functoriality_2006}
	Fernando Muro, \emph{On the functoriality of cohomology of categories}, J. Pure
	Appl. Algebra \textbf{204} (2006), no.~3, 455--472. \MR{2185612}
	
	\bibitem[Mur07]{muro_triangulated_2007-1}
	\bysame, \emph{A triangulated category without models}, arXiv:math/0703311
	(2007).
	
	\bibitem[Mur15]{muro_enhanced_2015}
	\bysame, \emph{Enhanced \textit{{A}}-infinity obstruction theory},
	arXiv:1510.00312 [math] (2015).
	
	\bibitem[Nee01]{neeman_triangulated_2001}
	Amnon Neeman, \emph{Triangulated categories}, Annals of {Mathematics}
	{Studies}, vol. 148, Princeton University Press, Princeton, NJ, 2001.
	\MR{1812507}
	
	\bibitem[Pup62]{puppe_formal_1962}
	D.~Puppe, \emph{On the formal structure of stable homotopy theory}, Colloquium
	on {Algebraic} {Topology}, Matematisk Institut, Aarhus Universitet, Aarhus,
	1962, pp.~65--71.
	
	\bibitem[PW92]{pirashvili_mac_1992}
	Teimuraz Pirashvili and Friedhelm Waldhausen, \emph{Mac {Lane} homology and
		topological {Hochschild} homology}, J. Pure Appl. Algebra \textbf{82} (1992),
	no.~1, 81--98. \MR{1181095}
	
	\bibitem[RB18]{rizzardo_k-linear_2018}
	Alice Rizzardo and Michel Van~den Bergh, \emph{A \textit{k}-linear triangulated
		category without a model}, arXiv:1801.06344 [math] (2018).
	
	\bibitem[Sch10]{schwede_algebraic_2010}
	Stefan Schwede, \emph{Algebraic versus topological triangulated categories},
	Triangulated categories, London {Math}. {Soc}. {Lecture} {Note} {Ser}., vol.
	375, Cambridge Univ. Press, Cambridge, 2010, pp.~389--407. \MR{2681714}
	
	\bibitem[Str69]{street_homotopy_1969}
	Ross Street, \emph{Homotopy classification of filtered complexes}, Ph.D.
	thesis, University of Sydney, 1969.
	
	\bibitem[Tab10]{tabuada_matrix_2010}
	Gonçalo Tabuada, \emph{Matrix invariants of spectral categories}, Int. Math.
	Res. Not. IMRN (2010), no.~13, 2459--2511. \MR{2669656}
	
	\bibitem[Ver96]{verdier_categories_1996}
	Jean-Louis Verdier, \emph{Des catégories dérivées des catégories
		abéliennes}, Astérisque (1996), no.~239, xii+253 pp. (1997). \MR{1453167}
	
\end{thebibliography}

\providecommand{\bysame}{\leavevmode\hbox to3em{\hrulefill}\thinspace}
\providecommand{\MR}{\relax\ifhmode\unskip\space\fi MR }
\providecommand{\MRhref}[2]{%
	\href{http://www.ams.org/mathscinet-getitem?mr=#1}{#2}
}
\providecommand{\href}[2]{#2}

\end{document}